\newtheorem{Thm}{Theorem}[section]
\newtheorem{Prop}[Thm]{Proposition}
\newtheorem{Lem}[Thm]{Lemma}
\theoremstyle{definition}
\newtheorem{Def}[Thm]{Definition}
\newtheorem{Exa}[Thm]{Example}
\numberwithin{equation}{section}
\newcommand{\Z}{\mathbb{Z}}
\newcommand{\C}{\mathbb{C}}
\newcommand{\g}{\mathfrak{g}}
\newcommand{\slm}{\mathfrak{sl}}
\newcommand{\HT}{{\rm ht}}
\newcommand{\wt}{{\rm wt}}
\newcommand{\ind}{{\rm Ind}}
\newcommand{\coind}{{\rm coInd}}
\newcommand{\res}{{\rm Res}}
\newcommand{\soc}{{\rm soc}}
\newcommand{\hd}{{\rm hd}}
\newcommand{\Hom}{{\rm Hom}}
\newcommand{\HOM}{{\rm HOM}}
\newcommand{\fMod}{ \mathrm{fmod}}
\newcommand{\e}{\tilde{e}}
\newcommand{\f}{\tilde{f}}
\newcommand{\ch}{{\rm ch}}
\newcommand{\qch}{{\rm ch}_{q}}
\newcommand{\YD}{ \mathcal{Y} }
\newcommand{\qdim}{ {\rm qdim} }
\newcommand{\infl}{ {\rm infl} }
\newcommand{\pr}{ {\rm pr} }
\newcommand{\ms}{ \nabla }
\newcommand{\tms}{ { {^t}\ms} }
\newcommand{\ims}{ \widehat{\nabla} }   
\newcommand{\tims}{ { {^t}\ims} }
\begin{document}

\title[Irreducible Modules over KLR Algebras of type $A_n$ and Semistandard Tableaux]
{Irreducible Modules over Khovanov-Lauda-Rouquier \\ Algebras of
type $A_n$ and Semistandard Tableaux }
\author[Seok-Jin Kang]{Seok-Jin Kang$^{1,2}$}
\thanks{$^1$ This research was supported by KRF Grant \# 2007-341-C00001.}
\thanks{$^2$ This research was supported by National Institute for Mathematical Sciences (2010 Thematic Program, TP1004).}
\address{Department of Mathematical Sciences,
Seoul National University, Gwanak-ro 599, 
Gwanak-gu, Seoul 151-747, Korea} \email{sjkang@math.snu.ac.kr}
\author[Euiyong Park]{Euiyong Park$^{3,4}$}
\thanks{$^3$ This research was supported by BK21 Mathematical Sciences Division.}
\thanks{$^4$ This research was supported by NRF Grant \# 2010-0010753.}
\address{BK21 Mathematical Sciences Division, Seoul National University,
Gwanak-ro 599, 
Gwanak-gu, Seoul 151-747, Korea}
\email{pwy@snu.ac.kr}

\subjclass[2000]{05E10, 17B10, 17D99}
\keywords{crystals, Khovanov-Lauda-Rouquier algebras, Young tableaux}

\begin{abstract}

Using combinatorics of Young tableaux, we give an explicit
construction of irreducible graded modules over
Khovanov-Lauda-Rouquier algebras $R$ and their cyclotomic quotients
$R^{\lambda}$ of type $A_{n}$. Our construction is compatible with
crystal structure. Let ${\mathbf B}(\infty)$ and ${\mathbf
B}(\lambda)$ be the $U_q(\slm_{n+1})$-crystal consisting of
marginally large tableaux and semistandard tableaux of shape
$\lambda$, respectively. On the other hand, let ${\mathfrak
B}(\infty)$ and ${\mathfrak B}(\lambda)$ be the
$U_q(\slm_{n+1})$-crystals consisting of isomorphism classes of
irreducible graded $R$-modules and $R^{\lambda}$-modules,
respectively. We show that there exist explicit crystal isomorphisms
$\Phi_{\infty}: {\mathbf B}(\infty) \overset{\sim} \longrightarrow
{\mathfrak B}(\infty)$ and $\Phi_{\lambda}: {\mathbf B}(\lambda)
\overset{\sim} \longrightarrow {\mathfrak B}(\lambda)$.

\end{abstract}

\maketitle


\section*{Introduction}

Let $\g$ be a symmetrizable Kac-Moody algebra and let $U_q^{-}(\g)$
be the negative part of the quantum group $U_q(\g)$ associated with
$\g$. Recently, Khovanov and Lauda \cite{KL08a, KL09} and Rouquier
\cite{R08} independently introduced a new family of graded algebras
$R$ whose representation theory gives a categorification of
$U_q^{-}(\g)$. The algebra $R$ is called the {\it
Khovanov-Lauda-Rouquier algebra} associated with $\g$. Let $\lambda
\in P^{+}$ be a dominant integral weight. It was conjectured that
the {\it cyclotomic quotient} $R^{\lambda}$ gives a categorification
of irreducible highest weight $U_q(\g)$-module $V(\lambda)$ with
highest weight $\lambda$ \cite{KL09}. This conjecture was shown to
be true when $\g$ is of type $A_{\infty}$ or $A_{n}^{(1)}$
\cite{BK08, BK09, BS08}.

In \cite{LV09}, Lauda and Vazirani investigated the crystal
structure on the set of isomorphism classes of finite dimensional
irreducible graded modules over $R$ and $R^{\lambda}$, where the
Kashiwara operators are defined in terms of induction and
restriction functors. Let ${\mathfrak B}(\infty)$ and ${\mathfrak
B}(\lambda)$ denote the $U_q(\g)$-crystal consisting of irreducible
graded $R$-modules and $R^{\lambda}$-modules, respectively. They
showed that there exist $U_q(\g)$-crystal isomorphisms ${\mathfrak
B}(\infty) \overset{\sim} \longrightarrow B(\infty)$ and ${\mathfrak
B}(\lambda) \overset{\sim} \longrightarrow B(\lambda)$, where
$B(\infty)$ and $B(\lambda)$ are the crystals of $U_q^{-}(\g)$ and
$V(\lambda)$, respectively. Consequently, every irreducible graded
module can be constructed {\it inductively} by applying the
Kashiwara operators on the trivial module.

On the other hand, in \cite{KR09}, Kleshchev and Ram gave an
explicit construction of irreducible graded $R$-modules for all
finite type using combinatorics of Lyndon words. They characterized
the irreducible graded $R$-modules as the simple heads of certain
induced modules. In \cite{HMM09}, Hill, Melvin and Mondragon
constructed cuspidal representations for all finite type and
completed the classification of irreducible graded $R$-modules given
in \cite{KR09}. It is still an open problem to construct irreducible
graded $R^{\lambda}$-modules in terms of Lyndon words. However, in
this approach, the action of Kashiwara operators is hidden in the
combinatorics of Lyndon words.

In this paper, using combinatorics of Young tableaux, we give an
explicit construction of irreducible graded $R$-modules and
$R^{\lambda}$-modules when $\g$ is of type $A_{n}$. Our construction
is compatible with crystal structure in the following sense. Let
${\mathbf B}(\lambda)$ be the set of all {\it semistandard tableaux}
of shape $\lambda$ with entries in $\{1, 2, \ldots, n+1\}$ and let
${\mathbf B}(\infty)$ be the set of all {\it marginally large
tableaux}. It is well-known that ${\mathbf B}(\lambda)$ and
${\mathbf B}(\infty)$ have $U_q(\slm_{n+1})$-crystal structures and
they are isomorphic to $B(\lambda)$ and $B(\infty)$, respectively
\cite{HK02, HL08, KN94, Lee07}. For each semistandard tableau of
shape $\lambda$ (resp.\ a marginally large tableau), we construct an
irreducible graded $R^{\lambda}$-module (resp.\ $R$-module) and show
that there exist explicit crystal isomorphisms  $\Phi_{\lambda}:
{\mathbf B}(\lambda) \overset{\sim} \longrightarrow {\mathfrak
B}(\lambda)$ and $\Phi_{\infty}: {\mathbf B}(\infty) \overset{\sim}
\longrightarrow {\mathfrak B}(\infty)$. In our construction,
irreducible graded modules appear as the simple heads of certain
induced modules that are determined by semistandard tableaux or
marginally large tableaux. Our work was inspired by \cite{KR09} and
\cite{V02}. We expect our work can be extended to other classical
type using combinatorics of Kashiwara-Nakashima tableaux in
\cite{KN94}. As was shown in \cite{HM09}, one may construct irreducible
modules over the Khovanov-Lauda-Rouquier algebra of type $A$ using cellular basis technique
introduced in \cite{GL96}.

This paper is organized as follows. In Section \ref{Sec: sl(n+1)}
and Section \ref{Sec: B(infty)}, we review the theory of
$U_q(\slm_{n+1})$-crystals and their combinatorial realization in
terms of Young tableaux. In Section \ref{Sec: KLR algebras}, we
recall the fundamental properties of Khovanov-Lauda-Rouquier
algebras $R$ and their cyclotomic quotients $R^\lambda$. We also
describe the crystal structures on $\mathfrak{B}(\infty)$ and
$\mathfrak{B}(\lambda)$. Section \ref{Sec: KL alg and B(lambda)} is
devoted to the main result of our paper. For each semistandard
tableau $T \in {\mathbf B}(\lambda)$, we construct an irreducible
graded $R^{\lambda}$-module $\Phi_{\lambda}(T):= \text{hd}
\text{Ind} \nabla_{T}$ as the simple heads of the induced module
$\text{Ind} \nabla_{T}$ determined by $T$, and  show that the
correspondence $T \longmapsto \text{hd} \text{Ind} \nabla_{T}$
defines a crystal isomorphism $\Phi_{\lambda}: {\mathbf B}(\lambda)
\overset{\sim} \longrightarrow {\mathfrak B}(\lambda)$. In Section
\ref{Sec: KL alg and B(infty)}, we extend the construction given in
Section \ref{Sec: KL alg and B(lambda)} to marginally large tableaux
to obtain an explicit construction of irreducible graded
$R$-modules. We also show that there exists an explicit crystal
isomorphism $\Phi_{\infty} : {\mathbf B}(\infty) \overset{\sim}
\longrightarrow {\mathfrak B}(\infty)$ induced by $\Phi_{\lambda}$.

\vskip 3em

\section{The crystal $B(\lambda)$ and Semistandard tableaux } \label{Sec: sl(n+1)}

In this section, we review the theory of $U_q(\slm_{n+1})$-crystals
and their connection with combinatorics of Young tableaux (see, for
example, \cite{HK02, KN94}). Let $I=\{1,2,\ldots, n\}$ and let
$$A = (a_{ij})_{i,j\in I} = \left(
\begin{matrix} 2 & -1 & 0 & \cdots & 0 \\ -1 & 2 & -1 & \cdots & 0 \\
\vdots &  & \ddots & & \vdots \\ 0 & \cdots & -1 & 2 & -1 \\ 0 & 0 &
\cdots & -1 & 2 \end{matrix} \right)$$ be the Cartan matrix of type
$A_{n}$. Set $P^{\vee} = \Z h_1 \oplus \cdots \oplus \Z h_n$,
$\mathfrak{h} = \C \otimes_{\Z} P^{\vee}$, and define the linear
functionals $\alpha_i, \varpi_i \in \mathfrak{h}^*$ $(i\in I)$ by
$$\alpha_i(h_j) = a_{ji} \qquad \varpi_i(h_j) = \delta_{ij} \ \ (i,
j \in I).$$ The $\alpha_i$ (resp.\ $\varpi_i$) are called the {\it
simple roots} (resp.\ {\it fundamental weights}). Set
$\Pi=\{\alpha_1, \ldots, \alpha_n\}$, $Q=\Z \alpha_1 \oplus \cdots
\oplus \Z \alpha_n$ and $P = \Z \varpi_1 \oplus \cdots \oplus \Z
\varpi_n$. The quadruple $(A, P^{\vee}, \Pi, P)$ is called the {\it
Cartan datum of type $A_{n}$}. The free abelian groups $P^{\vee}$,
$P$ and $Q$ are called the {\it dual weight lattice}, {\it weight
lattice}, and {\it root lattice}, respectively. We denote by $P^{+}
= \{\lambda \in P \mid \lambda(h_i) \ge 0 \ \text{for all} \ i\in
I\}$ the set of all {\it dominant integral weights}. Define
$$\epsilon_1 = \varpi_1, \quad \epsilon_{k+1} = \varpi_{k+1} -
\varpi_{k} (k \ge 1).$$ Then $\alpha_i = \epsilon_i -
\epsilon_{i+1}$, $P = \Z \epsilon_1 \oplus \cdots \oplus \Z
\epsilon_n$, and every dominant integral weight $\lambda = a_1
\varpi_1 + \cdots + a_n \varpi_n$ can be written as $\lambda =
\lambda_1 \epsilon_1 + \cdots + \lambda_n \epsilon_n$, where
$\lambda_i = a_i + \cdots + a_n$ $(i=1, \ldots, n)$.

Let $q$ be an indeterminate and for $m\ge n \ge 0$, define
$$[n]_q=\frac{q^{n} - q^{-n}}{q-q^{-1}}, \quad [n]_q! = [n]_q [n-1]_q \cdots [2]_q
[1]_q, \quad \left[\begin{matrix}  m \\ n \end{matrix} \right]_q =
\frac{[m]_q!}{[n]_q! [m-n]_q!}.$$

\begin{Def}
The {\it quantum special linear algebra} $U_q(\slm_{n+1})$ is the
associative algebra over $\C(q)$ generated by the elements $e_i,
f_i\ (i=1,\ldots,n)$ and $q^h\ (h \in P^{\vee})$ with the following
defining relations:
\begin{equation}
\begin{aligned}
& q^h q^{h'} = q^{h+h'} \quad \text{ for } h,h' \in P^{\vee}, \\
& q^h e_i q^{-h} = q^{\alpha_i(h)}e_i, \quad q^h f_i q^{-h} = q^{-\alpha_i(h)}f_i, \\
& e_if_j - f_je_i = \delta_{ij} \frac{ q^{h_i} - q^{- h_i} }{q - q^{-1}}, \\
& \sum_{r=0}^{1-a_{ij}} (-1)^k e_i^{(1-a_{ij}-r)} e_j e_i^{(r)}
=\sum_{r=0}^{1-a_{ij}} (-1)^k f_i^{(1-a_{ij}-r)} f_j f_i^{(r)}=0 \ \
(i \neq j).
\end{aligned}
\end{equation}
\end{Def}

Here, we use the notation $e_i^{(k)} = e_i^k / [k]_q!$, $f_i^{(k)} =
f_i^k / [k]_q!$.

For each $\lambda \in P^+$, there exists a unique irreducible
highest weight $U_q(\slm_{n+1})$-module $V(\lambda)$ with highest
weight $\lambda$. It was shown in \cite{Kash90, Kash91} that every
irreducible highest wight module $V(\lambda)$ has a crystal basis
$(L(\lambda), B(\lambda))$. The crystal $B(\lambda)$ can be thought
of as a basis at $q=0$ and most of combinatorial features of
$V(\lambda)$ are reflected on the structure of $B(\lambda)$.
Moreover, the crystal bases have very nice behavior with respect to
tensor product. The basic properties of crystal bases can be found
in \cite{HK02, Kash90, Kash91}, etc.

By extracting the standard properties of crystal bases, Kashiwara
introduced the notion of abstract crystals in \cite{Kash93}. An {\it
abstract crystal} is a set $B$ together with the maps $\wt: B \to
P$, $\tilde{e}_i, \tilde{f}_i: B \to B \sqcup \{0\}$,
$\varepsilon_i, \varphi_i: B \to \Z \cup \{ -\infty \}$ $(i\in I)$
satisfying certain conditions. The details on abstract crystals,
including the notion of strict morphism, embedding, isomorphism,
etc., can be found in \cite{HK02, Kash93}. We only give some
examples including the tensor product of abstract crystals.

\begin{Exa} \
\begin{enumerate}
\item Let $(L(\lambda), B(\lambda))$ be the crystal basis of the
highest weight module $V(\lambda)$ with highest weight $\lambda \in P^{+}$.
Then $B(\lambda)$ is a $U_q(\slm_{n+1})$-crystal.

\item Let $(L(\infty), B(\infty))$ be the crystal basis of $U_q^-(\slm_{n+1})$.
Then $B(\infty)$ is a $U_q(\slm_{n+1})$-crystal.

\item For $\lambda \in P$, let $\mathbf{T}^\lambda=\{ t_\lambda \}$ and define the maps
\begin{align}
& \wt(t_\lambda)=\lambda,\quad \tilde e_i t_\lambda = \tilde f_i t_\lambda = 0 \text{  for  } i\in I, \nonumber \\
& \varepsilon_i(t_\lambda) = \varphi_i(t_\lambda) = -\infty \text{  for  } i \in I. \nonumber
\end{align}
Then $\mathbf{T}^\lambda$ is a $U_q(\slm_{n+1})$-crystal.

\item Let $\mathbf{C} = \{c \}$ and define the maps
$$ \wt(c)=0,\quad \tilde e_i c= \tilde f_i c = 0, \quad \varepsilon_i(c) =  \varphi_i(c) = 0\ \  (i \in I).$$
Then $\mathbf{C}$ is a $U_q(\slm_{n+1})$-crystal.

\item
Let $B_1$, $B_2$ be crystals and set $B_1 \otimes B_2 = B_1 \times
B_2$. Define the maps
\begin{align*}
& \wt(b_1\otimes b_2) = \wt(b_1)+ \wt(b_2), \\
& \varepsilon_i(b_1\otimes b_2) = \max\{ \varepsilon_i(b_1),\ \varepsilon_i(b_2) - \langle h_i, \wt(b_1) \rangle \}, \\
& \varphi_i(b_1\otimes b_2) = \max\{ \varphi_i(b_2),\ \varphi_i(b_1) + \langle h_i, \wt(b_2) \rangle \}, \\
& \tilde e_i(b_1\otimes b_2) = \left\{
                                     \begin{array}{ll}
                                       \tilde e_i b_1 \otimes b_2 & \hbox{ if } \varphi_i(b_1) \ge \varepsilon_i(b_2), \\
                                       b_1 \otimes \tilde e_i b_2 & \hbox{ if } \varphi_i(b_1) < \varepsilon_i(b_2),
                                     \end{array}
                                   \right. \\
& \tilde f_i(b_1\otimes b_2) = \left\{
                                     \begin{array}{ll}
                                       \tilde f_i b_1 \otimes b_2 & \hbox{ if } \varphi_i(b_1) > \varepsilon_i(b_2), \\
                                       b_1 \otimes \tilde f_i b_2 & \hbox{ if } \varphi_i(b_1) \le \varepsilon_i(b_2).
                                     \end{array}
                                   \right.
\end{align*}
Then $B_1 \otimes B_2$ is a $U_q(\slm_{n+1})$-crystal.

\end{enumerate}
\end{Exa}

We now recall the connection between the theory of
$U_q(\slm_{n+1})$-crystals and combinatorics of Young tableaux. A
{\it Young diagram} $\lambda$ is a collection of boxes arranged in
left-justified rows with a weakly decreasing number of boxes in each
row. We denote by $\YD$ the set of all Young diagrams. If a Young
diagram $\lambda$ contains $N$ boxes, we write $\lambda \vdash N$
and $|\lambda|=N$. The number of rows in $\lambda$ will be denoted
by $l(\lambda)$. We denote by $^t \lambda$ denotes the Young
diagram obtained by flipping $\lambda$ over its main diagonal. We
usually identify a Young diagram $\lambda$ with the partition
$\lambda = (\lambda_1 \ge \lambda_2 \ge \ldots)$, where $\lambda_i$
is the number of boxes in the $i$th row of $\lambda$. Recall that a
dominant integral weight $\lambda = a_1 \varpi_1 + \cdots + a_n
\varpi_n$ can be written as $\lambda = \lambda_1 \epsilon_1 + \cdots
+ \lambda_n \epsilon_n$, where $\lambda_i = a_i + \cdots + a_n$
$(i=1, \ldots, n)$. Since $\lambda_1 \ge \lambda_2 \ge \cdots \ge
\lambda_n \ge 0$, we identify a dominant integral weight $\lambda
=a_1 \varpi_1 + \cdots + a_n \varpi_n$ with a partition $\lambda =
(\lambda_1 \ge \lambda_2 \ge \cdots \ge \lambda_n \ge 0)$.

A {\it tableau} $T$ of shape $\lambda$ is a filling of a Young
diagram $\lambda$ with numbers, one for each box. We say that a
tableau $T$ is {\it semistandard} if
\begin{enumerate}
\item the entries in each row are weakly increasing from left to right,
\item the entries in each column are strictly increasing from top to bottom.
\end{enumerate}
We denote by ${\mathbf B}(\lambda)$ the set of all semistandard
tableaux of shape $\lambda$ with entries in $\{1, 2, \ldots, n+1
\}$.

Let $\lambda=(\lambda_1 \ge \lambda_2 \ge \cdots \ge \lambda_s > 0)$
be a Young diagram with $l(\lambda)=s$ and $|\lambda|=\lambda_1 +
\cdots + \lambda_s =N$. It is well-known that ${\mathbf B}(\lambda)$
has a $U_q(\slm_{n+1})$-crystal structure and is isomorphic to the
crystal $B(\lambda)$. Let us briefly recall how to define the
crystal structure on ${\mathbf B}(\lambda)$. Let ${\mathbf B} =
{\mathbf B}(\varpi_1)$ be the crystal of the vector representation
$V(\varpi_1)$ given below.

\vskip 1em \qquad\qquad\qquad\qquad\quad $\ {\mathbf B}\ : \ $
\xymatrix{ {
\begin{tabular}{|c|} \hline  1 \\ \hline \end{tabular}}
 \ar[r]^1 &
 { \begin{tabular}{|c|} \hline  2 \\ \hline \end{tabular}}
  \ar[r]^2 & \cdots \ar[r]^{n-1} &
  { \begin{tabular}{|c|} \hline  $n$ \\ \hline \end{tabular}}
   \ar[r]^{n} &
   { \begin{tabular}{|c|} \hline  $n+1$ \\ \hline \end{tabular}}
 }.
\vskip 1em
\noindent

By the {\it Middle-Eastern reading}, we mean the reading of entries
of a semistandard tableau by moving across the rows from right to
left and from top to bottom. Thus we get an embedding $\Upsilon_{M}:
{\mathbf B}(\lambda) \rightarrow {\mathbf B}^{\otimes N}$ and one
can define a $U_q(\slm_{n+1})$-crystal structure on ${\mathbf
B}(\lambda)$ by the inverse of $\Upsilon_{M}$. On the other hand,
the {\it Far-Eastern reading} proceeds down the columns from top to
bottom and from right to left and yields an embedding $\Upsilon_{F}:
{\mathbf B}(\lambda) \rightarrow {\mathbf B}^{\otimes N}$, which
also defines a $U_q(\slm_{n+1})$-crystal structure on ${\mathbf
B}(\lambda)$. It is known that the crystal structure on ${\mathbf
B}(\lambda)$ does not depend on $\Upsilon_{M}$ or $\Upsilon_{F}$ and
that it is isomorphic to $B(\lambda)$ (see, for example,
\cite{HK02}), where the highest weight vector is given by
$$ T_\lambda \  = \   \begin{tabular}{|c| c |c| c| c| c| c| c|}
           \hline
             1 & { $\cdots$ } & { $\cdots$ } & 1 &  1 & 1  \\
            \hline
            2 & $\cdots$&  {$\cdots$} & 2     \\
            \cline{1-4 }
           $\vdots$ & $\vdots$   & $\vdots$    \\
            \cline{1-3 }
            $s$ & {$\cdots$}  \vline\ \   $s$    \\
           \cline{1-2 }
         \end{tabular}.
$$

For a semistandard tableau $T \in {\mathbf B}(\lambda)$, write
$$ \Upsilon_M(T) = { \begin{tabular}{|c|} \hline  $a^T_{1,\lambda_1}$ \\ \hline \end{tabular}}\ \
 \otimes \cdots \otimes\  { \begin{tabular}{|c|} \hline  $a^T_{1,1}$ \\ \hline \end{tabular}}\ \
\otimes \ { \begin{tabular}{|c|} \hline  $a^T_{2,\lambda_2}$  \\
\hline \end{tabular}}\ \ \otimes \cdots \otimes \ {
\begin{tabular}{|c|} \hline  $a^T_{2,1}$ \\ \hline \end{tabular}}\ \
\otimes \ \cdots \otimes \ { \begin{tabular}{|c|} \hline  $a^T_{s,
1}$ \\ \hline \end{tabular}}\ \ ,$$
where $a^T_{ij}$ is the entry in
the $j$th box of the $i$th row of $T$.
Define a map $\Psi_{\lambda}
: \mathbf{B}(\lambda) \to \YD^s$ by
\begin{align} \label{Eq: injection Psi}
\Psi_{\lambda}(T) := (\mu^{(1)}, \ldots, \mu^{(s)}),
\end{align}
where $ \mu^{(k)} = (a^T_{k,\lambda_k} - k, a^T_{k,\lambda_k-1} - k,
\ldots, a^T_{k, 1} - k )$ for $k = 1, \ldots, s$. Note that
$\mu^{(k)}$ could be the empty Young diagram $(0,0,\ldots)$ and that
$\Psi_{\lambda}$ is injective. Pictorially, $\Psi_{\lambda}(T) =
(\mu^{(1)}, \ldots, \mu^{(s)})$ can be visualized as follows:

\vskip 1.5em
\begin{center}
\begin{texdraw}
\drawdim em \setunitscale 0.15  \linewd 0.6
\arrowheadtype t:F

\move(10 0) \lvec(10 -70)  
\linewd 0.3
\move(8 -58) \lvec(12 -58)  \htext(3 -59){\scriptsize$1$}
\move(8 -48) \lvec(12 -48)  \htext(3 -49){\scriptsize$2$}

\move(8 -28) \lvec(12 -28)  \htext(3 -29){\scriptsize$s$}

  \htext(4 -17){$ \vdots$}
  \htext(4 -40){$ \vdots$}

\move(8 -2) \lvec(12 -2)  \htext(3 -3){\scriptsize$n$}


\move(20 -58) \lvec(20 -8) \lvec(30 -8) \lvec(30 -58) \lvec(20 -58) \lfill f:0.8

\htext(40 -50){$\cdots$}

\move(55 -58) \lvec(55 -38) \lvec(65 -38) \lvec(65 -58) \lvec(55 -58) \lfill f:0.8


\move(90 -48) \lvec(90 -18) \lvec(100 -18) \lvec(100 -48) \lvec(90 -48) \lfill f:0.8

\htext(110 -45){$\cdots$}

\move(124 -48) \lvec(124 -38) \lvec(134 -38) \lvec(134 -48) \lvec(124 -48) \lfill f:0.8

\htext(160 -25){$\cdots$}

\move(185 -28) \lvec(185 -8) \lvec(195 -8) \lvec(195 -28) \lvec(185 -28) \lfill f:0.8

\htext(207 -25){$\cdots$}

\move(220 -28) \lvec(220 -18) \lvec(230 -18) \lvec(230 -28) \lvec(220 -28) \lfill f:0.8


\lpatt(0.5 1)

\move(30 -8)\clvec(30 -8)(40 -33)(30 -58)
\htext(36 -34){$\mu^{(1)}_1$}

\move(65 -38)\clvec(65 -38)(72 -48)(65 -58)
\htext(70 -52){$\mu^{(1)}_{\lambda_1}$}

\move(100 -48)\clvec(100 -48)(110 -33)(100 -18)
\htext(107 -35){$\mu^{(2)}_1$}

\move(134 -48)\clvec(134 -48)(139 -43)(134 -38)
\htext(138 -47){$\mu^{(2)}_{\lambda_2}$}

\move(195 -28)\clvec(195 -28)(202 -18)(195 -8)
\htext(199 -21){$\mu^{(s)}_1$}

\move(230 -28)\clvec(230 -28)(235 -23)(230 -18)
\htext(234 -27){$\mu^{(s)}_{\lambda_s}$}

\move(20 -60)\clvec(30 -65)(40 -65)(43 -65)
\move(65 -60)\clvec(55 -65)(45 -65)(43 -65)
\htext(43 -72){$ \uparrow $}
\htext(42 -79){$\mu^{(1)}$}

\move(90 -50)\clvec(90 -53)(100 -58)(112 -58)
\move(134 -50)\clvec(134 -53)(124 -58)(112 -58)
\htext(112 -65){$ \uparrow $}
\htext(110 -72){$\mu^{(2)}$}

\move(185 -30)\clvec(185 -30)(195 -36)(208 -36)
\move(230 -30)\clvec(230 -30)(220 -36)(208 -36)
\htext(208 -43){$ \uparrow $}
\htext(206 -51){$\mu^{(s)}$}

\end{texdraw}
\end{center}
Here, $\mu^{(i)} = (\mu^{(i)}_1 \ge \mu^{(i)}_2 \ge \cdots \ge
\mu^{(i)}_{\lambda_i} \ge 0 )$ for $i=1, \ldots, s$. \vskip 1em

\begin{Exa} \label{Ex: lambda 1}
Let $\g = \slm_6$ and $\lambda = 2\varpi_1 + 2\varpi_2+ \varpi_4 +
\varpi_5$. If
$$ T = \begin{tabular}{|c|c|c|c|c|c|}
     \hline
     1 & 1 & 3 & 3 & 4 & 6  \\
     \hline
     2 & 3 & 4 & 5 \\
     \cline{1-4}
     3 & 5  \\
     \cline{1-2}
     5 & 6 \\
     \cline{1-2}
     6 \\
     \cline{1-1}
   \end{tabular},
 $$
 then
\begin{align*}
\Upsilon_M(T) &=
\begin{tabular}{|c|} \hline 6 \\ \hline \end{tabular} \otimes
\begin{tabular}{|c|} \hline 4 \\ \hline \end{tabular} \otimes
\begin{tabular}{|c|} \hline 3 \\ \hline \end{tabular} \otimes
\begin{tabular}{|c|} \hline 3 \\ \hline \end{tabular} \otimes
\begin{tabular}{|c|} \hline 1 \\ \hline \end{tabular} \otimes
\begin{tabular}{|c|} \hline 1 \\ \hline \end{tabular} \otimes
\begin{tabular}{|c|} \hline 5 \\ \hline \end{tabular} \otimes
\begin{tabular}{|c|} \hline 4 \\ \hline \end{tabular} \otimes
\begin{tabular}{|c|} \hline 3 \\ \hline \end{tabular} \otimes
\begin{tabular}{|c|} \hline 2 \\ \hline \end{tabular} \otimes
\begin{tabular}{|c|} \hline 5 \\ \hline \end{tabular} \otimes
\begin{tabular}{|c|} \hline 3 \\ \hline \end{tabular} \otimes
\begin{tabular}{|c|} \hline 6 \\ \hline \end{tabular} \otimes
\begin{tabular}{|c|} \hline 5 \\ \hline \end{tabular} \otimes
\begin{tabular}{|c|} \hline 6 \\ \hline \end{tabular}
\ , \\
\Upsilon_F(T) &=
\begin{tabular}{|c|} \hline 6 \\ \hline \end{tabular} \otimes
\begin{tabular}{|c|} \hline 4 \\ \hline \end{tabular} \otimes
\begin{tabular}{|c|} \hline 3 \\ \hline \end{tabular} \otimes
\begin{tabular}{|c|} \hline 5 \\ \hline \end{tabular} \otimes
\begin{tabular}{|c|} \hline 3 \\ \hline \end{tabular} \otimes
\begin{tabular}{|c|} \hline 4 \\ \hline \end{tabular} \otimes
\begin{tabular}{|c|} \hline 1 \\ \hline \end{tabular} \otimes
\begin{tabular}{|c|} \hline 3 \\ \hline \end{tabular} \otimes
\begin{tabular}{|c|} \hline 5 \\ \hline \end{tabular} \otimes
\begin{tabular}{|c|} \hline 6 \\ \hline \end{tabular} \otimes
\begin{tabular}{|c|} \hline 1 \\ \hline \end{tabular} \otimes
\begin{tabular}{|c|} \hline 2 \\ \hline \end{tabular} \otimes
\begin{tabular}{|c|} \hline 3 \\ \hline \end{tabular} \otimes
\begin{tabular}{|c|} \hline 5 \\ \hline \end{tabular} \otimes
\begin{tabular}{|c|} \hline 6 \\ \hline \end{tabular}
\ ,
\end{align*}
and $\Psi_{\lambda}(T) = (\mu^{(1)}, \mu^{(2)}, \mu^{(3)}, \mu^{(4)}, \mu^{(5)} )$, where
$$
 \mu^{(1)} := (5,3,2,2,0,0), \quad \mu^{(2)} := (3,2,1,0), \quad
\mu^{(3)} := (2,0), \quad \mu^{(4)} := (2,1), \quad \mu^{(5)} := (1).
$$
Pictorially, $\Psi_{\lambda}(T) = (\mu^{(1)}, \mu^{(2)}, \mu^{(3)},
\mu^{(4)}, \mu^{(5)} )$ is given as follows:

\vskip 1em
\begin{center}
\begin{texdraw}
\drawdim em \setunitscale 0.15  \linewd 0.6
\arrowheadtype t:F

\move(10 0) \lvec(10 -70)  
\linewd 0.3
\move(8 -58) \lvec(12 -58)  \htext(3 -59){\scriptsize$1$}
\move(8 -48) \lvec(12 -48)  \htext(3 -49){\scriptsize$2$}
\move(8 -38) \lvec(12 -38)  \htext(3 -39){\scriptsize$3$}
\move(8 -28) \lvec(12 -28)  \htext(3 -29){\scriptsize$4$}
\move(8 -18) \lvec(12 -18)  \htext(3 -19){\scriptsize$5$}
\move(8 -8) \lvec(12 -8)  \htext(3 -9){\scriptsize$6$}


\move(20 -58) \lvec(20 -8) \lvec(30 -8) \lvec(30 -58) \lvec(20 -58) \lfill f:0.8
\move(20 -48) \lvec(30 -48) \move(20 -38) \lvec(30 -38) \move(20 -28) \lvec(30 -28) \move(20 -18) \lvec(30 -18)
\move(32 -58) \lvec(32 -28) \lvec(42 -28) \lvec(42 -58) \lvec(32 -58) \lfill f:0.8
\move(32 -48) \lvec(42 -48) \move(32 -38) \lvec(42 -38)
\move(44 -58) \lvec(44 -38) \lvec(54 -38) \lvec(54 -58) \lvec(44 -58) \lfill f:0.8
\move(44 -48) \lvec(54 -48)
\move(56 -58) \lvec(56 -38) \lvec(66 -38) \lvec(66 -58) \lvec(56 -58) \lfill f:0.8
\move(56 -48) \lvec(66 -48)
\move(68 -58) \lvec(78 -58)
\move(80 -58) \lvec(90 -58)


\move(100 -48) \lvec(100 -18) \lvec(110 -18) \lvec(110 -48) \lvec(100 -48) \lfill f:0.8
\move(100 -38) \lvec(110 -38) \move(100 -28) \lvec(110 -28)
\move(112 -48) \lvec(112 -28) \lvec(122 -28) \lvec(122 -48) \lvec(112 -48) \lfill f:0.8
\move(112 -38) \lvec(122 -38)
\move(124 -48) \lvec(124 -38) \lvec(134 -38) \lvec(134 -48) \lvec(124 -48) \lfill f:0.8
\move(136 -48) \lvec(146 -48)

\move(156 -38) \lvec(156 -18) \lvec(166 -18) \lvec(166 -38) \lvec(156 -38) \lfill f:0.8
\move(156 -28) \lvec(166 -28)
\move(168 -38) \lvec(178 -38)

\move(188 -28) \lvec(188 -8) \lvec(198 -8) \lvec(198 -28) \lvec(188 -28) \lfill f:0.8
\move(188 -18) \lvec(198 -18)
\move(200 -28) \lvec(200 -18) \lvec(210 -18) \lvec(210 -28) \lvec(200 -28) \lfill f:0.8

\move(220 -18) \lvec(220 -8) \lvec(230 -8) \lvec(230 -18) \lvec(220 -18) \lfill f:0.8

\lpatt(0.5 1)
\move(20 -60)\clvec(30 -68)(40 -68)(54 -68)
\move(90 -60)\clvec(80 -68)(70 -68)(55 -68)
\htext(52 -75){$ \uparrow $}
\htext(50 -82){$\mu^{(1)}$}

\move(100 -50)\clvec(100 -58)(115 -58)(123 -58)
\move(146 -50)\clvec(146 -58)(131 -58)(124 -58)
\htext(120 -65){$ \uparrow $}
\htext(118 -72){$\mu^{(2)}$}

\move(156 -40)\clvec(156 -40)(159 -43)(167 -43)
\move(178 -40)\clvec(178 -40)(175 -43)(168 -43)
\htext(163 -50){$ \uparrow $}
\htext(161 -57){$\mu^{(3)}$}

\move(188 -30)\clvec(188 -30)(191 -33)(199 -33)
\move(210 -30)\clvec(210 -30)(207 -33)(200 -33)
\htext(195 -40){$ \uparrow $}
\htext(193 -48){$\mu^{(4)}$}

\move(220 -20)\clvec(221 -21)(221 -21)(225 -21)
\move(230 -20)\clvec(229 -21)(229 -21)(226 -21)
\htext(223 -28){$ \uparrow $}
\htext(220 -36){$\mu^{(5)}$}

\end{texdraw}
\end{center}

\end{Exa}

\vskip 1em Note that $\Upsilon_M(T)$ can be obtained by reading the
top entries of columns in the above diagram from left to right.

The following lemma will play a crucial role in proving our main
result (Theorem \ref{Thm: crystal iso}).

\begin{Lem} \label{Lem: tableau}
Let $T$ be a semistandard tableau of shape $\lambda = (\lambda_1 \ge
\lambda_2 \ge  \ldots \ge\lambda_s > 0 )$, and let
$$\Psi_\lambda(T) = ( \mu^{(1)}, \mu^{(2)}, \ldots,  \mu^{(s)}  ), $$
where $ \mu^{(i)} = (\mu_1^{(i)} \ge \mu_2^{(i)} \ge \ldots \ge
\mu_{\lambda_i}^{(i)} \ge 0)$ for $i=1,\ldots,s$. Suppose that $T$
is not the highest weight vector $T_\lambda$; i.e., not all
$\mu^{(1)}, \ldots, \mu^{(s)} $ are $(0,0,\ldots)$. Set
\begin{equation*}
\begin{aligned}
 i_T & = \min
\{ \mu_j^{(i)} + i-1 |\  1 \le i \le s,\ 1\le j \le \lambda_i,\
\mu_j^{(i)} > 0 \}, \\
\varepsilon & = \varepsilon_{i_{T}}(T). \end{aligned}
\end{equation*}
Then we have
\begin{enumerate}
\item $ \varepsilon_{i_T}(T) = \# \{ \mu_j^{(i)} |\ \mu_j^{(i)} >0,\ \mu_j^{(i)} + i -1 = i_T,\ 1 \le i \le s,\ 1\le j \le \lambda_i
\}$;
\item $$ \e_{i_T}^{\varepsilon}(T) = T^+, $$
where $T^+$ is the tableau of shape $\lambda$ obtained from $T$ by replacing all entries $i_T+1$ by $i_T$ from the top row to the $i_T$th row.
\end{enumerate}

\end{Lem}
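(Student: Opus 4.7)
My plan is to compute $\varepsilon_{i_T}(T)$ and $\tilde e_{i_T}^{\,\varepsilon}(T)$ directly from the Middle--Eastern reading $\Upsilon_M(T)$, using the standard $+-$ cancellation rule governing the Kashiwara operators on tensor products of $\mathbf B$. Two quick observations, both consequences of the minimality defining $i_T$, drive the computation. If $a^T_{i,l}=i_T$ with $i<i_T$ then $\mu_j^{(i)}+i-1 = i_T-1 < i_T$, contradicting the minimality of $i_T$; hence entries equal to $i_T$ occur in $T$ \emph{only in row $i_T$}. An analogous argument shows that entries equal to $i_T+1$ occur only in rows $1,2,\ldots,i_T+1$. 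Write $n_k^-$ for the number of $i_T+1$'s in row $k$ and $n_k^+$ for the number of $i_T$'s in row $k$, so $n_k^+=0$ unless $k=i_T$ and $n_k^-=0$ unless $k\leq i_T+1$.

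Because each row is weakly increasing, reading it right--to--left produces a weakly decreasing sequence; so in the signature for $\tilde e_{i_T}$ each row contributes a block of $n_k^-$ minus signs followed by $n_k^+$ plus signs. Concatenating in row order, the full signature of $\Upsilon_M(T)$ reads
\[
(-)^{n_1^-}\cdots(-)^{n_{i_T-1}^-}\,(-)^{n_{i_T}^-}(+)^{n_{i_T}^+}\,(-)^{n_{i_T+1}^-}.
\]
The key input from semistandardness is that any $i_T+1$ appearing in row $i_T+1$ sits directly below a box of row $i_T$ whose entry lies in $[i_T, i_T+1)$ and must therefore equal $i_T$; hence $n_{i_T}^+\geq n_{i_T+1}^-$. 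Under $+-$ cancellation the $+$'s from row $i_T$ annihilate all of row $i_T+1$'s $-$'s, and the reduced signature is $(-)^{\sum_{k=1}^{i_T}n_k^-}(+)^{n_{i_T}^+-n_{i_T+1}^-}$. Reading off the minus signs gives $\varepsilon_{i_T}(T)=\sum_{k=1}^{i_T}n_k^-$, which is exactly the cardinality stated in~(1).

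For~(2) I would induct on the number of applications of $\tilde e_{i_T}$. At each stage, the rightmost uncancelled $-$ sits at the right end of the signature block for the largest--indexed row $k\leq i_T$ still containing an $i_T+1$, and corresponds (since that row is read right--to--left) to the \emph{leftmost} $i_T+1$ in row $k$; thus $\tilde e_{i_T}$ replaces that entry with $i_T$. Semistandardness of the new tableau is immediate from the characterization above: the entry directly above the modified box lies in row $k-1<i_T$ and so cannot equal $i_T$, while the entry directly below is $\geq i_T+2$. Recomputing the signature shows that $n_k^-$ has decreased by $1$ while the aggregated $+$--block has grown by $1$; the inequality $n_{i_T}^+\geq n_{i_T+1}^-$ is only strengthened, so the cancellation pattern persists and the action stays in row $k$ until all of that row's $i_T+1$'s have been stripped (leftmost first), then passes to row $k-1$. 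After $\varepsilon$ steps every $i_T+1$ in rows $1,\ldots,i_T$ has become $i_T$, giving $T^+$. The only genuine subtlety is this bookkeeping when the action crosses into a new row, and it is controlled uniformly by the inequality $n_{i_T}^+\geq n_{i_T+1}^-$.
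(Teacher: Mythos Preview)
Your argument is correct and follows the same route as the paper: read $T$ via $\Upsilon_M$, use the minimality of $i_T$ to see that entries equal to $i_T$ can occur only in row $i_T$, and then apply the $\pm$--signature rule for the tensor product crystal. The paper's proof records exactly these ingredients and then appeals to the tensor product rule in one line; your version spells out the signature block by block, proves the cancellation inequality $n_{i_T}^+\ge n_{i_T+1}^-$ (which handles the $i_T{+}1$'s in row $i_T{+}1$ that the paper's terse statement leaves implicit), and carries out the induction for part~(2) explicitly. So the approach is the same, with your write-up supplying the bookkeeping the paper omits.
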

\begin{proof}
Let
$$ \Upsilon_M(T) = { \begin{tabular}{|c|} \hline  $a_{1,\lambda_1}$ \\ \hline \end{tabular}}\ \  \otimes \cdots \otimes\  { \begin{tabular}{|c|} \hline  $a_{1,1}$ \\ \hline \end{tabular}}\ \  \otimes \ { \begin{tabular}{|c|} \hline  $a_{2,\lambda_2}$  \\ \hline \end{tabular}}\ \ \otimes \cdots \otimes \ { \begin{tabular}{|c|} \hline  $a_{2,1}$ \\ \hline \end{tabular}}\ \ \otimes \ \cdots \otimes \ { \begin{tabular}{|c|} \hline  $a_{s, 1}$ \\ \hline \end{tabular}}\ \ ,$$
where $a_{ij}$ is the entry in the $j$th box of the $i$th row of
$T$. Then, from the definition of $\Psi_\lambda$, we have
 $$a_{i, \lambda_i - j + 1} = \mu_j^{(i)} + i \qquad (1 \le j \le \lambda_i),$$
 which yields
\begin{align*}
& i_T = \min \{ a_{ij}-1 |\  1 \le i \le s,\ 1\le j \le \lambda_i,\ a_{ij} > i \}, \\
& \# \{ a_{ij} |\ a_{ij} > i,\ a_{ij} -1 = i_T  \} = \# \{ \mu_j^{(i)} |\ \mu_j^{(i)} > 0, \mu_j^{(i)} + i -1 = i_T  \} .
\end{align*}
Note that the set $\{ a_{ij}-1 |\  1 \le i \le s,\ 1\le j \le
\lambda_i,\ a_{ij} > i \}$ is not empty since $T$ is not the highest
weight vector $T_\lambda$. Take the rightmost number $a_{pq}$ of
$\Upsilon_M(T)$ such that $a_{pq} = i_T+1$. Since $T$ is
semistandard; i.e.,
\begin{align*}
a_{pk} & \ge  a_{pq}   \quad \text{ for } k \ge q,\\
a_{p' q'}  &\ne   a_{pq} - 1  \quad \text{ for } 1 \le p' < p,
\end{align*}
and
$$ \varepsilon_i( { \begin{tabular}{|c|} \hline  $j$ \\ \hline \end{tabular}}\ ) = \left\{
                                                                                 \begin{array}{ll}
                                                                                   1 & \hbox{if } i=j-1, \\
                                                                                   0 & \hbox{otherwise,}
                                                                                 \end{array}
                                                                               \right.
\qquad \varphi_i( { \begin{tabular}{|c|} \hline  $j$ \\ \hline
\end{tabular}}\ ) = \left\{
                                                                                 \begin{array}{ll}
                                                                                   1 & \hbox{if } i=j, \\
                                                                                   0 &
                                                                                   \hbox{otherwise},
                                                                                 \end{array}
                                                                               \right.
$$
our assertion follows from the tensor product rule of crystals.
\end{proof}

\begin{Exa} \label{Ex: lambda 2}
We use the same notations as in Example \ref{Ex: lambda 1}. Consider
the following diagram for $\Psi_\lambda(T)$.

\vskip 1em
\begin{center}
\begin{texdraw}
\drawdim em \setunitscale 0.15  \linewd 0.6
\arrowheadtype t:F

\move(10 0) \lvec(10 -70)  
\linewd 0.3
\move(8 -58) \lvec(12 -58)  \htext(3 -59){\scriptsize$1$}
\move(8 -48) \lvec(12 -48)  \htext(3 -49){\scriptsize$2$}
\move(8 -38) \lvec(12 -38)  \htext(3 -39){\scriptsize$3$}
\move(8 -28) \lvec(12 -28)  \htext(3 -29){\scriptsize$4$}
\move(8 -18) \lvec(12 -18)  \htext(3 -19){\scriptsize$5$}
\move(8 -8) \lvec(12 -8)  \htext(3 -9){\scriptsize$6$}


\move(20 -58) \lvec(20 -8) \lvec(30 -8) \lvec(30 -58) \lvec(20 -58) \lfill f:0.8
\move(20 -48) \lvec(30 -48) \move(20 -38) \lvec(30 -38) \move(20 -28) \lvec(30 -28) \move(20 -18) \lvec(30 -18)
\move(32 -58) \lvec(32 -28) \lvec(42 -28) \lvec(42 -58) \lvec(32 -58) \lfill f:0.8
\move(32 -48) \lvec(42 -48) \move(32 -38) \lvec(42 -38)
\move(44 -58) \lvec(44 -38) \lvec(54 -38) \lvec(54 -58) \lvec(44 -58) \lfill f:0.4
\move(44 -48) \lvec(54 -48)
\move(56 -58) \lvec(56 -38) \lvec(66 -38) \lvec(66 -58) \lvec(56 -58) \lfill f:0.4
\move(56 -48) \lvec(66 -48)
\move(68 -58) \lvec(78 -58)
\move(80 -58) \lvec(90 -58)


\move(100 -48) \lvec(100 -18) \lvec(110 -18) \lvec(110 -48) \lvec(100 -48) \lfill f:0.8
\move(100 -38) \lvec(110 -38) \move(100 -28) \lvec(110 -28)
\move(112 -48) \lvec(112 -28) \lvec(122 -28) \lvec(122 -48) \lvec(112 -48) \lfill f:0.8
\move(112 -38) \lvec(122 -38)
\move(124 -48) \lvec(124 -38) \lvec(134 -38) \lvec(134 -48) \lvec(124 -48) \lfill f:0.4
\move(136 -48) \lvec(146 -48)

\move(156 -38) \lvec(156 -18) \lvec(166 -18) \lvec(166 -38) \lvec(156 -38) \lfill f:0.8
\move(156 -28) \lvec(166 -28)
\move(168 -38) \lvec(178 -38)

\move(188 -28) \lvec(188 -8) \lvec(198 -8) \lvec(198 -28) \lvec(188 -28) \lfill f:0.8
\move(188 -18) \lvec(198 -18)
\move(200 -28) \lvec(200 -18) \lvec(210 -18) \lvec(210 -28) \lvec(200 -28) \lfill f:0.8

\move(220 -18) \lvec(220 -8) \lvec(230 -8) \lvec(230 -18) \lvec(220 -18) \lfill f:0.8

\htext(46 -65){$ \uparrow $}
\htext(44 -74){$\mu_3^{(1)}$}

\htext(58 -65){$ \uparrow $}
\htext(56 -74){$\mu_4^{(1)}$}

\htext(126 -55){$ \uparrow $}
\htext(124 -64){$\mu_3^{(2)}$}

\end{texdraw}
\end{center}

\vskip 1em

\noindent Thus we have
$$ i_T = 2,\qquad  \varepsilon_{i_T}(T) = 3 = \# \{ \mu^{(1)}_3, \mu^{(1)}_4, \mu^{(2)}_3  \}, $$
and
$$ \e_{2}^3 T = T^+ = \begin{tabular}{|c|c|c|c|c|c|}
     \hline
     1 & 1 & 2 & 2 & 4 & 6  \\
     \hline
     2 & 2 & 4 & 5 \\
     \cline{1-4}
     3 & 5  \\
     \cline{1-2}
     5 & 6 \\
     \cline{1-2}
     6 \\
     \cline{1-1}
   \end{tabular}
 .$$
\end{Exa}

\vskip 3em

\section{The crystal $B(\infty)$ and marginally large tableaux } \label{Sec: B(infty)}

In this section, we recall the realization of the
$U_q(\slm_{n+1})$-crystal $B(\infty)$ in terms of {\it marginally
large} tableaux given in \cite{Cli98, HL08, Lee07}.

\begin{Def}\
\begin{enumerate}
\item A semistandard tableau $T \in \mathbf{B}(\lambda)$ is {\it large} if it consists of $n$ non-empty rows, and if for each $i=1,\ldots, n$,
the number of boxes having the entry $i$ in the $i$th row is
strictly greater than the number of all boxes in the $(i+1)$th row.
\item A large tableau $T$ is {\it marginally large} if for each $i=1,\ldots,n$, the number of boxes having the entry $i$ in the $i$th row is
greater than the number of all boxes in the $(i+1)$th row by exactly one. In particular, the $n$th row of $T$ should contain one box having the entry $n$.
\end{enumerate}
\end{Def}

We consider the following tableau:
$$ T_0 :=
\begin{tabular}{|c|}
   \hline
   1  \\
   \hline
   2  \\
   \hline
   $ \vdots$  \\
   \hline
   $n$ \\
   \hline
 \end{tabular}\ .
  $$
  \vskip 1em
\noindent
For each marginally large tableau $T$, we construct a left-infinite extension of $T$ obtained by adding infinitely many copies of $T_0$
to the left of $T$. When there is no danger of confusion, we identity a marginally large tableau $T$ with the left-infinite extension of $T$.

\begin{Exa}
 Let $\g = \slm_{4}$. The following tableau $T$ is marginally large:
 $$ T =
 \begin{tabular}{|c|c|c|c|c|c|c|}
   \hline
   1 & 1 & 1 & 1 & 2 & 3 & 4 \\
   \hline
   2 & 2 & 2  \\
   \cline{1-3}
   3 & 4  \\
   \cline{1-2}
 \end{tabular}\ .
 $$
 The left-infinite extension of $T$ obtained by adding infinitely many copies of $T_0$
to the left of $T$ is given as follows.
$$ \begin{tabular}{c|c|c|c|c|c|c|c|c|}
   \hline
   $\cdots$ & 1 & 1 & 1 & 1 & 1 & 2 & 3 & 4 \\
   \hline
   $\cdots$ & 2 & 2 & 2 & 2  \\
   \cline{1-5}
   $\cdots$ & 3 & 3 & 4  \\
   \cline{1-4}
 \end{tabular}\ .
 $$
 \end{Exa}
 \vskip 1em

Let $\mathbf{B}(\infty)$ be the set of all left-infinite extensions
of marginally large tableaux. The Kashiwara operators $\f_i, \e_i \
(i\in I)$ on $\mathbf{B}(\infty)$ are defined as follows
(\cite{Lee07}):

\begin{enumerate}
\item[(B1)] We consider the infinite sequence of entries obtained by taking the Far-Eastern reading of $T \in \mathbf{B}(\infty)$.
To each entry $b$ in this sequence, we assign $-$ if $b = i+1$ and
$+$ if $b = i$. Otherwise we put nothing. From this sequence of
$+$'s and $-$'s, cancel out all $(+,-)$ pairs. The remaining
sequence is called the {\it $i$-signature} of $T$.
\item[(B2)] Denote by $T'$ the tableau obtained from $T$ by replacing the entry $i$ by $i+1$ corresponding to the leftmost $+$ in the $i$-signature of $T$.
\begin{itemize}
\item If $T'$ is marginally large, then we define $\f_i T$ to be $T'$.
\item If $T'$ is not marginally large, then define $\f_i T$ to be the marginally large tableau obtained by
pushing all the rows appearing below the changed box in $T'$ to the
left by one box.
\end{itemize}
\item[(B3)] Denote by $T''$ the tableau obtained from $T$ by replacing the entry $i$ by $i-1$ corresponding to the rightmost $-$ in the $i$-signature of $T$.
\begin{itemize}
\item If $T''$ is marginally large, then we define $\e_i T$ to be $T''$.
\item If $T''$ is not marginally large, then define $\e_i T$ to be the marginally large tableau obtained by
pushing all the rows appearing below the changed box in $T''$ to the
right by one box.
\end{itemize}
\item[(B4)] If there is no $-$ in the $i$-signature of $T$, we define $\e_i T = 0 $.
\end{enumerate}

Let $T$ be a marginally large tableau in $\mathbf{B}(\infty)$. For
each $i = 1, \ldots, n$, suppose that the $i$th row of $T$ contains
$b^i_j$-many $j$'s and infinitely many $i$'s. Define the maps $\wt:
\mathbf{B}(\infty) \to P$, $\varphi_i, \varepsilon_i:
\mathbf{B}(\infty) \to \Z$ by
\begin{align*}
\wt(T) &:= - \sum_{j=1}^n( \sum_{k=j+1}^{n+1}b_k^1 + \sum_{k=j+1}^{n+1}b_k^2 + \cdots + \sum_{k=j+1}^{n+1}b_k^j) \alpha_j, \\
\varepsilon_i(T) &:= \text{ the number of $-$'s in the $i$-signature of $T$}, \\
\varphi_i(T) &:= \varepsilon_i(T) + \langle h_i, \wt(T) \rangle.
\end{align*}

\begin{Prop} \cite[Theorem 4.8]{Lee07}
The sextuple $(\mathbf{B}(\infty), \wt, \e_i, \f_i, \varepsilon_i,
\varphi_i )$ becomes a $U_q(\slm_{n+1})$-crystal, which is
isomorphic to the crystal $B(\infty)$ of $U_q^-(\slm_{n+1})$.
\end{Prop}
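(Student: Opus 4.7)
The proof naturally splits into two parts: verifying that $(\mathbf{B}(\infty), \wt, \e_i, \f_i, \varepsilon_i, \varphi_i)$ satisfies the axioms of an abstract crystal, and establishing a crystal isomorphism with $B(\infty)$.

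For the first part, I would verify directly from the definitions in (B1)--(B4) that the axioms of an abstract $U_q(\slm_{n+1})$-crystal hold. The identity $\varphi_i(T) = \varepsilon_i(T) + \langle h_i, \wt(T) \rangle$ holds by definition. The remaining axioms to check are that $\wt(\f_i T) = \wt(T) - \alpha_i$ (which is a direct count of how the row-multiplicities $b_j^i$ change when an $i$ becomes an $i+1$, including the case where a row is pushed), that $\varepsilon_i(\f_i T) = \varepsilon_i(T) + 1$ (since removing the leftmost uncancelled $+$ from the $i$-signature turns it into a $-$), and that $\e_i$ and $\f_i$ are mutually inverse when nonzero (the marginal largeness condition is precisely what makes the pushing operation reversible).

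For the isomorphism with $B(\infty)$, I would appeal to Kashiwara's characterization of $B(\infty)$ by its family of surjections onto the $B(\lambda)$. For each $\lambda = \lambda_1 \epsilon_1 + \cdots + \lambda_n \epsilon_n \in P^+$, define $\pi_\lambda : \mathbf{B}(\infty) \otimes \mathbf{T}^\lambda \to \mathbf{B}(\lambda) \sqcup \{0\}$ as follows. Given a marginally large $T \in \mathbf{B}(\infty)$, keep in its $i$th row exactly $\lambda_i - \lambda_{i+1}$ of the leftmost entries $i$ beyond those forced by marginal largeness (discarding the rest), and output $0$ if the resulting array fails to be semistandard of shape $\lambda$. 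I would check that $\pi_\lambda$ is a strict crystal morphism whose image is all of $\mathbf{B}(\lambda)$: on the target side the action of $\e_i, \f_i$ is controlled by Lemma \ref{Lem: tableau} together with the tensor product rule for the Far-Eastern reading, while on the source side it is given by (B1)--(B4); comparing the two tensor product computations shows compatibility.

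Finally, since $\mathbf{B}(\infty)$ has a unique element of weight $0$ (the tableau $T_\infty$ whose $i$th row contains only $i$'s), and since the maps $\pi_\lambda$ are compatible as $\lambda$ varies and jointly separate points of $\mathbf{B}(\infty)$, the universal property characterizing $B(\infty)$ (as recalled in \cite{HK02, Kash93}) yields a unique crystal isomorphism $\mathbf{B}(\infty) \overset{\sim}{\longrightarrow} B(\infty)$. The main obstacle I foresee is the case analysis for $\pi_\lambda$: one must carefully track how the pushing of rows in (B2)--(B3) interacts with the truncation defining $\pi_\lambda$, particularly when the box where $\f_i$ acts lies near the boundary between the ``kept'' and ``discarded'' portions of a row. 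This boundary analysis, combined with the fact that the Far-Eastern reading of a marginally large tableau stabilizes after finitely many tensor factors once $\lambda$ is sufficiently dominant, is the technical heart of the argument.
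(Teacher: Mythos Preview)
The paper does not prove this proposition: it is quoted from \cite[Theorem~4.8]{Lee07} and no argument is given in the text, so there is nothing here to compare your proposal against.

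As a standalone sketch your two-part strategy (verify the crystal axioms, then identify with $B(\infty)$ by comparing with the $B(\lambda)$) is a standard and workable route, but two points need tightening. First, the ``universal property characterizing $B(\infty)$'' you invoke should be stated precisely; Kashiwara's criterion in \cite{Kash93} is formulated via strict embeddings $\Psi_i: B(\infty)\hookrightarrow B(\infty)\otimes B_i$ into elementary crystals, not via a family of surjections onto the $B(\lambda)$, so if you want to use the latter picture you must either derive it or cite where it is established. Second, your description of $\pi_\lambda$ is garbled: in the left-infinite extension every row $i$ contains infinitely many $i$'s, and the natural truncation simply keeps the rightmost $\lambda_i$ entries of row $i$ (this is exactly the inverse of the map $T\mapsto T_{\rm ml}$ the paper describes); your phrase ``$\lambda_i-\lambda_{i+1}$ of the leftmost entries $i$ beyond those forced by marginal largeness'' does not quite parse. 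With those fixes the boundary case analysis you flag is indeed the only real work, and it is manageable.
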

Note that the highest weight vector $T_\infty$ of
$\mathbf{B}(\infty)$ is given as follows:
$$
T_\infty =\ \begin{tabular}{c|c|c|c|c|}
             \hline
             $\cdots$ & 1 & 1 & 1 & 1 \\
             \hline
            $\cdots$  & 2 & 2 & 2  \\
             \cline{1-4}
            $\vdots$  & $ \vdots$ & $ \vdots$ \\
              \cline{1-3}
            $\cdots$  & $n$  \\
             \cline{1-2}
           \end{tabular}
$$

It was shown in \cite{Kash93} that there is a unique strict crystal embedding
\begin{align} \label{Eq: Crystal embedding}
\iota_{\lambda} : \mathbf{B}(\lambda) \hookrightarrow
\mathbf{B}(\infty) \otimes \mathbf{T}^{\lambda} \otimes \mathbf{C}
\quad \text{given by} \quad T_{\lambda} \mapsto T_\infty \otimes
t_{\lambda} \otimes c,
\end{align}
where $T_\lambda$ is the highest weight vector of
$\mathbf{B}(\lambda)$. We now describe this crystal embedding
explicitly. Let $T $ be a semistandard tableau of $
\mathbf{B}(\lambda)$. We consider the left-infinite extension $T'$
obtained from $T$ by adding infinitely many copies of $T_0$
to the left of $T$. Then we construct the marginally large tableau
$T_{\rm ml}$ from $T'$ by shifting the rows of $T'$ in an
appropriate way. Note that $T_{\rm ml}$ is uniquely determined. For
example, if
 $$
 T= \
\begin{tabular}{|c|c|c|c|c|}
  \hline
  1 & 1 & 2 & 2 & 3 \\
  \hline
  2 & 3 & 3  \\
  \cline{1-3}
  4\\
  \cline{1-1}
\end{tabular}\ ,
 $$
 then we have
 $$
 T_{\rm ml} = \
\begin{tabular}{c|c|c|c|c|c|c|c|c|c|}
  \hline
  $\cdots$ & 1 & 1 & 1 & 1 & 1 & 1 & 2 & 2 & 3 \\
  \hline
  $\cdots$ & 2 & 2 & 2 & 3 & 3  \\
  \cline{1-6}
  $\cdots$ & 3& 4\\
  \cline{1-3}
\end{tabular}.
 $$

 Now the crystal embedding $\iota_\lambda: {\mathbf B}(\lambda)
 \hookrightarrow {\mathbf B}(\infty) \otimes {\mathbf
 T}^{\lambda} \otimes {\mathbf C}$ is given by $T \longmapsto T_{\rm
 ml} \otimes t_{\lambda} \otimes c$ \cite{Lee07}.
\vskip 1em

For $T \in \mathbf{B}(\infty)$, we denote by $a_{ij}^T$ the entry in
the $j$th box from the right in the $i$th row of $T$. Define a map
$\Psi_{\infty}:\mathbf{B}(\infty) \to \mathcal{Y}^n$ by
\begin{align} \label{Eq: injection Psi infity}
\Psi_{\infty}(T) := (\mu^{(1)}, \ldots, \mu^{(n)} ),
\end{align}
where $\mu^{(k)} = ( a_{k,1}^T -k, a_{k,2}^T - k , \ldots )$ for $k=1, \ldots, n$. Since
$$a_{k,j}^T - k = 0 \quad \text{ for } j\gg 0 ,$$
the Young diagram $\mu^{(k)}$ is well-defined for each $k$.
Then, by construction, for any $T\in \mathbf{B}(\lambda)$, we have
$$\Psi_{\lambda}(T) = \Psi_{\infty}(T_{\rm ml})$$
up to adding the empty Young diagrams. More precisely, we have the following lemma.

\begin{Lem} \label{Lem: Psi B(infty)}
Let $T $ be a semistandard tableau of $\mathbf{B}(\lambda)$, and $\iota_\lambda(T) = T_{\rm ml} \otimes t_\lambda \otimes c$.
Then $\Psi_{\infty}( T_{\rm ml} )$ is the $n$-tuple of Young diagrams obtained from $\Psi_{\lambda}(T)$ by adding the empty Young diagrams.
\end{Lem}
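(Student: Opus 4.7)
The plan is to unwind the definitions of $T_{\rm ml}$, $\Psi_\infty$ and $\Psi_\lambda$ and then match components row by row. Set $s = l(\lambda)$, so that $\Psi_\lambda(T) = (\mu^{(1)}, \ldots, \mu^{(s)})$. The key preliminary observation is that the left-infinite extension $T'$ attaches, for every $k = 1, \ldots, n$, an infinite string of $k$'s to the left of the $k$th row of $T$ (where for $s < k \le n$ the row consists solely of these $k$'s, coming entirely from the copies of $T_0$). The subsequent shifting that produces $T_{\rm ml}$ from $T'$ merely repositions rows horizontally, so it does not alter the multiset or the left-to-right order of entries within any single row. Consequently, reading row $k$ of $T_{\rm ml}$ from the right yields $a^T_{k,\lambda_k}, a^T_{k,\lambda_k-1}, \ldots, a^T_{k,1}$ followed by an infinite tail of $k$'s when $k \le s$, and simply $k, k, \ldots$ when $k > s$.

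Substituting these row readings into the definition of $\Psi_\infty$, I would obtain for $1 \le k \le s$
\[ \Psi_\infty(T_{\rm ml})^{(k)} = (a^T_{k,\lambda_k} - k, \, a^T_{k,\lambda_k-1} - k, \, \ldots, \, a^T_{k,1} - k, \, 0, \, 0, \, \ldots), \]
which, as a Young diagram (ignoring trailing zeros), is precisely $\mu^{(k)}$ in the definition of $\Psi_\lambda(T)$. For $s < k \le n$, every entry of row $k$ equals $k$, hence the $k$th component is $(0,0,\ldots)$, i.e., the empty Young diagram. Putting these together gives exactly the claim that $\Psi_\infty(T_{\rm ml})$ is obtained from $\Psi_\lambda(T)$ by appending $n-s$ empty Young diagrams.

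The only point that genuinely requires care is the preliminary observation about how $T_{\rm ml}$ arises from $T$: the shifting rule is described only via an example in the text, so I would spell out that in any realignment producing a marginally large tableau, the rows are not merged, split, or internally permuted, and that since $T_{\rm ml}$ is uniquely determined by marginal largeness, the $k$th row of $T_{\rm ml}$ is forced to be (infinitely many $k$'s) concatenated with the $k$th row of $T$. Once this row-by-row description of $T_{\rm ml}$ is in hand, the lemma reduces to the direct substitution above, with no further combinatorics required.
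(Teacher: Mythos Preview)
Your proposal is correct and is precisely the direct unwinding of the definitions that the paper has in mind: in the paper the lemma is stated without a proof environment, preceded only by the remark ``by construction,'' and your argument makes explicit the row-by-row identification that this phrase abbreviates. The only thing to note is that you have supplied more detail than the authors deemed necessary, but the content matches exactly.
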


\vskip 3em

\section{Khovanov-Lauda-Rouquier algebras of type $A_n$} \label{Sec: KLR algebras}

In this section, we review the basic properties of
Khovanov-Lauda-Rouquier algebras \cite{KL08a, KL09, LV09, R08}. Let
$\alpha, \beta \in Q^+$ and $d = \HT(\alpha), d' = \HT(\beta)$.
Define $$ I^{\alpha} := \{  \mathbf{i} = (i_1, \ldots, i_d) \in
I^d|\ \alpha_{i_1} + \cdots + \alpha_{i_d} = \alpha  \}. $$ Then the
symmetric group $\Sigma_d$ acts on $I^{\alpha}$ naturally. Let
$\Sigma_{d + d' }/ \Sigma_{d} \times \Sigma_{d'}$ be the set of the
minimal length coset representatives of $\Sigma_{d} \times
\Sigma_{d'}$ in $\Sigma_{d + d' }$. The following proposition is
well-known.

\begin{Prop}\label{Prop: shuffle} \cite[Chapter 2.1]{GP00},
\cite[Section 2.2]{LV09}

There is a 1-1- correspondence between $\Sigma_{d + d' }/ \Sigma_{d}
\times \Sigma_{d'}$ and the set of all shuffles of $\mathbf{i}$ and
$\mathbf{j}$, where  $\mathbf{i} = (i_1, \ldots, i_d)  \in I^\alpha$
and $ \mathbf{j} = (j_1, \ldots, j_{d'}) \in I^\beta$.
\end{Prop}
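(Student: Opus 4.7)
The plan is to exhibit the bijection explicitly via the standard characterization of minimal length coset representatives in the parabolic quotient $\Sigma_{d+d'}/\Sigma_{d}\times \Sigma_{d'}$: namely, $w\in \Sigma_{d+d'}$ is a minimal length representative of its coset if and only if
\[
w(1) < w(2) < \cdots < w(d) \quad \text{and} \quad w(d+1) < w(d+2) < \cdots < w(d+d').
\]
This is a well-known fact from the theory of Coxeter groups, so I would simply cite \cite{GP00} and take it as given.

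Next I would define a map $\Phi$ from the set of minimal coset representatives to the set of shuffles by sending $w$ to the sequence $\mathbf{s} = (s_1, \ldots, s_{d+d'})$ with $s_{w(k)} = i_k$ for $1 \le k \le d$ and $s_{w(d+\ell)} = j_\ell$ for $1 \le \ell \le d'$, each position being labeled by its origin. The increasing conditions on $w$ guarantee that the entries assigned from $\mathbf{i}$ occur in the order $i_1, \ldots, i_d$ as one reads left to right, and likewise for $\mathbf{j}$, so $\mathbf{s}$ is indeed a shuffle of $\mathbf{i}$ and $\mathbf{j}$. For the inverse $\Psi$, given a shuffle I would let $w(k)$ be the position in the shuffle of the $k$th entry coming from $\mathbf{i}$ (for $1 \le k \le d$) and $w(d+\ell)$ the position of the $\ell$th entry from $\mathbf{j}$ (for $1 \le \ell \le d'$); the defining property of a shuffle forces the required inequalities on $w$, so $w$ is a minimal coset representative. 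The compositions $\Phi\circ\Psi$ and $\Psi\circ\Phi$ are manifestly the identity.

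The only real subtlety, and arguably the only point requiring care, is a bookkeeping one: a shuffle here must be interpreted as a labeled interleaving that remembers which positions were filled from $\mathbf{i}$ and which from $\mathbf{j}$, rather than as a bare sequence of elements of $I$. If one treated shuffles as unlabeled sequences, repeated entries in $\mathbf{i}$ and $\mathbf{j}$ would cause $\Phi$ to collapse distinct coset representatives onto the same sequence and the bijection would fail. Once this convention is fixed, no substantive obstacle remains and the proof reduces to the direct verification sketched above.
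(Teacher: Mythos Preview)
Your proposal is correct, and in fact goes beyond what the paper itself does: the paper does not supply a proof of this proposition at all, but simply states it with citations to \cite[Chapter 2.1]{GP00} and \cite[Section 2.2]{LV09}. Your explicit construction via the standard characterization of minimal length coset representatives is exactly the argument those references contain, and your remark about interpreting shuffles as labeled interleavings (so that repeated letters do not collapse distinct cosets) is the right clarification---indeed, the paper uses the proposition in precisely this form, writing sums over $w\cdot(\mathbf{i}*\mathbf{j})$ for $w$ ranging over the coset representatives.
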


For $\mathbf{i} = (i_1, \ldots, i_d)  \in I^\alpha$ and $ \mathbf{j}
= (j_1, \ldots, j_{d'}) \in I^\beta$, we denote by $\mathbf{i}
* \mathbf{j}$ the concatenation of $\mathbf{i}$ and $\mathbf{j}$:
$$ \mathbf{i} * \mathbf{j} := (i_1, \ldots, i_d, j_1, \ldots, j_{d'}) \in I^{\alpha + \beta}.  $$

\begin{Def} Let $\alpha \in Q^+$ and $d = \HT(\alpha)$.
The {\it Khovanov-Lauda-Rouquier algebra} $R(\alpha)$ of type $A_n$
corresponding to $\alpha \in Q^+$ is the associative graded
$\C$-algebra generated by $1_{\mathbf{i}}\ (\mathbf{i} \in
I^{\alpha})$, $x_k\ (1\le k\le d), \tau_t\ (1\le t\le d-1) $ with
the following defining relations :
\begin{equation}\label{Eq:KLRA}
\begin{aligned}
& 1_{\mathbf{i}} 1_{\mathbf{j}} = \delta_{\mathbf{ij}} 1_{\mathbf{i}},\ x_k 1_\mathbf{i} =  1_\mathbf{i} x_k, \ \tau_t 1_{\mathbf{i}} = 1_{\tau_t( \mathbf{i})} \tau_t,\\
& x_k x_l = x_l x_k, \\
& \tau_t \tau_s = \tau_s \tau_t \text{ if } |t - s| > 1, \\
& \tau_t \tau_t 1_{\mathbf{i}} = \left\{
                                       \begin{array}{ll}
                                         0 & \hbox{if } i_t = i_{t+1}, \\
                                         1_{\mathbf{i}} & \hbox{if } |i_t - i_{t+1}| > 1 , \\
                                         ( x_t + x_{t+1} )1_\mathbf{i} & \hbox{if } |i_t - i_{t+1}| = 1,
                                       \end{array}
                                     \right. \\
& (\tau_t \tau_{t+1} \tau_t - \tau_{t+1} \tau_t \tau_{t+1} ) 1_\mathbf{i} = \left\{
                                                                                   \begin{array}{ll}
                                                                                     1_\mathbf{i} & \hbox{if } i_t = i_{t+2} \text{ and } |i_t - i_{t+1}|=1, \\
                                                                                     0 & \hbox{otherwise},
                                                                                   \end{array}
                                                                                 \right. \\
& (\tau_t x_k - x_{\tau_t(k)} \tau_t )1_\mathbf{i} = \left\{
                                                           \begin{array}{ll}
                                                             1_\mathbf{i} & \hbox{if } k=t \text{ and } i_t = i_{t+1}, \\
                                                             -1_\mathbf{i} & \hbox{if } k = t+1 \text{ and } i_t = i_{t+1},  \\
                                                             0 & \hbox{otherwise.}
                                                           \end{array}
                                                         \right.
\end{aligned}
\end{equation}

\end{Def}
For simplicity, we set $R(0) = \C$. The grading on $R(\alpha)$ is given by
$$ \deg(1_\mathbf{i}) =0, \quad \deg(x_k 1_\mathbf{i})= 2, \quad \deg(\tau_t 1_\mathbf{i})= -a_{i_t,i_{t+1}}. $$
For $\lambda = \sum_{i=1}^n a_i \varpi_i \in P^+ $, let
$I^{\lambda}(\alpha)$ be the two-side ideal of $R(\alpha)$ generated
by $x_1^{a_{i_1}} 1_\mathbf{i}\ (\mathbf{i}=(i_1, \ldots, i_d) \in
I^\alpha) $, and define
$$ R^\lambda(\alpha) := R(\alpha) / I^\lambda(\alpha). $$
The algebra $R^{\lambda}(\alpha)$ is called the {\it cyclotomic
quotient} of $R(\alpha)$ at $\lambda$.

Let $R(\alpha)$-$\fMod$ (resp.\ $R^\lambda(\alpha)$-$\fMod$) be the
category of finite dimensional graded $R(\alpha)$-modules (resp.\
$R^\lambda(\alpha)$-modules). For any irreducible graded module
$M \in R^\lambda(\alpha)$-$\fMod$, $M$ can be
viewed as an irreducible graded $R(\alpha)$-module annihilated by
$I^\lambda(\alpha)$, which defines a functor
$${\infl_\lambda}:
R^\lambda(\alpha) \text{-}\fMod \to  R(\alpha) \text{-}\fMod.$$
For $M \in R^\lambda(\alpha)$-$\fMod$, $\infl_\lambda
M$ is called the {\it inflation} of $M$. On the
other hand, from the natural projection $R(\alpha) \to
R^\lambda(\alpha)$, we define the functor $\pr_\lambda: R(\alpha)
\text{-}\fMod \to  R^\lambda(\alpha) \text{-}\fMod$ by
$$\pr_\lambda N := N / I^\lambda(\alpha) N \quad  \text{ for }  N \in R(\alpha) \text{-}\fMod.$$
From now on, when there is no danger of confusion, we identify any
irreducible graded $R^\lambda(\alpha)$-module with an irreducible
graded $R(\alpha)$-module annihilated by $I^\lambda(\alpha)$ via the
funtor $\infl_\lambda$.

The algebra $R(\alpha)$ has a graded anti-involution
\begin{align} \label{Eq: anti-involution}
 \psi : R(\alpha) \longrightarrow R(\alpha)
\end{align}
which is the identity on generators. Using this anti-involution, for any finite dimensional
graded $R(\alpha)$-module $M$, the dual space $M^* := \Hom_{\C}( M, \C )$ of $M$ has the $R(\alpha)$-module structure given by
$$ (r \cdot f)(m):= f(\psi(r)m) \quad (r \in R(\alpha),  m\in M).  $$
Note that, if $M$ is irreducible, then $M \simeq M^*$ by \cite[Theorem 3.17]{KL09}.

Given $M = \bigoplus_{i \in \Z} M_i$, let $M\langle k \rangle$ denote the graded
module obtained from $M$ by shifting the grading by $k$; i.e.,
$$M \langle k \rangle := \bigoplus_{i \in \Z} M \langle k \rangle _i,$$
where $M \langle k \rangle_i := M_{i-k} $ for $i \in \Z $. We define the {\it
$q$-dimension} $\qdim(M)$ of $M = \bigoplus_{i \in \Z} M_i$ to be
$$ \qdim(M) := \sum_{i \in \Z} (\dim M_i) q^i.  $$
Set
\begin{align*}
R &:= \bigoplus_{\alpha \in Q^+} R(\alpha), \qquad K_0(R) := \bigoplus_{\alpha \in Q^+} K_0( \text{$R(\alpha)$-$\fMod$} ),\\
R^\lambda &:= \bigoplus_{\alpha \in Q^+} R^\lambda(\alpha) , \qquad
K_0(R^\lambda) := \bigoplus_{\alpha \in Q^+} K_0( \text{$R^\lambda(\alpha)$-$\fMod$} ),
\end{align*}
where $K_0( \text{$R(\alpha)$-$\fMod$} )$ (resp.\ $K_0(
\text{$R^\lambda(\alpha)$-$\fMod$} )$) is the Grothendieck group of
$R(\alpha)$-$\fMod$ (resp.\ $R^\lambda(\alpha)$-$\fMod$). For $M \in
\text{$R(\alpha)$-$\fMod$}$  (resp.\ $\text{$R^\lambda(\alpha)$-$\fMod$} $), we denote by $[M]$ the
isomorphism class of $M$ in $K_0( \text{$R(\alpha)$-$\fMod$} )$
(resp.\ $K_0( \text{$R^\lambda(\alpha)$-$\fMod$} )$).
 Then
$K_0(R)$ (resp.\ $K_0(R^\lambda)$) has the $\Z[q,q^{-1}]$-module structure given by $q[M] = [M\langle 1 \rangle]$.

Define the {\it $q$-character} $\qch(M)$ (resp.\ {\it character}
$\ch(M)$) of $M \in R(\alpha)$-$\fMod$ by
$$ \qch(M) := \sum_{\mathbf{i} \in I^\alpha} \qdim(1_\mathbf{i} M ) \ \mathbf{i} \qquad \text{(resp.\ }
 \ch(M) := \sum_{\mathbf{i} \in I^\alpha} \dim(1_\mathbf{i} M ) \ \mathbf{i} ). $$
Note that the evaluation of $\qdim(1_\mathbf{i} M )$ at $q=1$ is $\dim(1_\mathbf{i} M )$. For $M \in R(\alpha)$-$\fMod$ and $N \in R(\beta)$-$\fMod$, we set
\begin{align*}
\qch(M) * \qch(N) &:= \sum_{\mathbf{i} \in I^\alpha, \mathbf{j} \in I^\beta} \qdim(1_\mathbf{i} M )  \qdim(1_\mathbf{j} N ) \ \mathbf{i} * \mathbf{j}, \\
\ch(M) * \ch(N) &:= \sum_{\mathbf{i} \in I^\alpha, \mathbf{j} \in I^\beta} \dim(1_\mathbf{i} M )  \dim(1_\mathbf{j} N ) \ \mathbf{i} * \mathbf{j}.
\end{align*}

For $M,N \in R(\alpha)$-$\fMod$, let $\Hom(M,N)$ be the $\C$-vector space of degree preserving homomorphisms,
and $\Hom(M \langle k \rangle,N) = \Hom(M,N \langle -k \rangle)$ be the $\C$-vector space of homogeneous homomorphisms of degree $k$. Define
$$ \HOM(M,N):= \bigoplus_{k\in \Z} \Hom(M,N \langle k \rangle). $$

Let $\beta_1, \ldots, \beta_k \in Q^+$ and set $\beta = \beta_1 +
\cdots + \beta_k$. Then there is a natural embedding
$$ \iota_{\beta_1, \ldots   ,\beta_k}: R(\beta_1) \otimes \cdots \otimes R(\beta_n) \hookrightarrow R(\beta),  $$
which yields the following functors from $R(\beta_1) \otimes \cdots \otimes R(\beta_n)$-$\fMod$ to $R(\beta)$-$\fMod$:
\begin{align*}
\ind_{\beta_1, \ldots   ,\beta_k}\ - &:= R(\beta) \otimes_{R(\beta_1) \otimes \cdots \otimes R(\beta_n)}\  - ,\\
\coind_{\beta_1, \ldots   ,\beta_k}\ - &:= \HOM_{R(\beta_1) \otimes \cdots \otimes R(\beta_n)}(R(\beta), \ -).
\end{align*}

The properties of the functors $\ind, \coind$ and $\res$ are
summarized in the following lemmas.

\begin{Lem}\cite[(2.3)]{LV09} \label{Lem: Frobenius reciprocity}
For $M \in R(\beta_1) \otimes \cdots \otimes R(\beta_k)$-$\fMod$ and
$N \in R(\beta)$-$\fMod$, we have
\begin{align*}
\HOM_{R(\beta)} ( \ind_{\beta_1, \ldots \beta_k}M, N) &\cong \HOM_{R(\beta_1)\otimes \cdots \otimes R(\beta_k)} ( M, \res_{\beta_1,\ldots,\beta_k}N),\\
\HOM_{R(\beta)} (N, \coind_{\beta_1, \ldots, \beta_k} M ) &\cong \HOM_{R(\beta_1) \otimes \cdots \otimes R(\beta_k)}(\res_{\beta_1, \ldots ,\beta_k}N, M).
\end{align*}
\end{Lem}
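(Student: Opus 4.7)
The plan is to recognize this as the standard Frobenius reciprocity / tensor-Hom adjunction in the graded setting, applied to the subalgebra inclusion $\iota_{\beta_1,\ldots,\beta_k} : R(\beta_1) \otimes \cdots \otimes R(\beta_k) \hookrightarrow R(\beta)$. Write $A = R(\beta_1) \otimes \cdots \otimes R(\beta_k)$ and $B = R(\beta)$ for brevity. By the very definitions given in the excerpt, $\ind_{\beta_1,\ldots,\beta_k} = B \otimes_A -$ and $\coind_{\beta_1,\ldots,\beta_k} = \HOM_A(B, -)$, while $\res_{\beta_1,\ldots,\beta_k}$ is restriction of scalars along $\iota$. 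So both isomorphisms will follow from the two classical adjunctions, once we check they respect the grading.

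For the first isomorphism, I would exhibit the standard map explicitly: given $\varphi \in \HOM_B(B \otimes_A M, N)$ of degree $d$, associate to it the map $\tilde{\varphi} \in \HOM_A(M, \res N)$ of degree $d$ defined by $\tilde{\varphi}(m) = \varphi(1 \otimes m)$. Its inverse sends $\psi \in \HOM_A(M, \res N)$ to the $B$-linear map $b \otimes m \mapsto b \cdot \psi(m)$, which is well defined because $\psi$ is $A$-linear. I would then verify that both assignments preserve degree homogeneously, so summing over $d \in \Z$ gives the stated isomorphism of graded vector spaces.

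For the second isomorphism, I would run the dual argument. Given $\varphi \in \HOM_B(N, \HOM_A(B, M))$, define $\tilde{\varphi}(n) := \varphi(n)(1) \in M$; this is $A$-linear on $\res N$ and of the same degree as $\varphi$. Conversely, given $\psi \in \HOM_A(\res N, M)$, send it to the map $n \mapsto \bigl(b \mapsto \psi(b \cdot n)\bigr)$, which lands in $\HOM_A(B, M)$ and is $B$-linear in $n$. Again, one checks these are mutually inverse and degree-preserving, then assembles over all degrees.

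I do not expect any real obstacle: the only mildly delicate point is bookkeeping the $\Z$-grading, since $\HOM$ here is the direct sum $\bigoplus_k \Hom(-, -\langle k\rangle)$ rather than ungraded homomorphisms. The arguments above are carried out homogeneously in degree $d$, so the graded versions follow from the ungraded ones applied in each degree. Because this is cited from \cite{LV09} and is truly a general fact about an inclusion of (graded) associative algebras with $B$ finitely generated as an $A$-module on both sides, I would likely present only a brief proof or simply refer the reader to the cited source.
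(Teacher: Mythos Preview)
Your proposal is correct: this is precisely the standard tensor--Hom adjunction for the inclusion $A \hookrightarrow B$, and the degree-by-degree verification you outline handles the graded $\HOM$ with no difficulty. The paper itself gives no proof at all for this lemma, simply citing \cite[(2.3)]{LV09}; your closing remark that one would ``likely present only a brief proof or simply refer the reader to the cited source'' is exactly what the authors did.
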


\begin{Thm}\cite[Theorem 2.2]{LV09} \label{Thm: ind and coind}
Let $M_i \in R(\beta_i)$-$\fMod$ $(i=1, \ldots, k)$ and
$$K := -\sum_{i > j} (\beta_i|\beta_j), $$
where $( \ | \ )$ is the nondegenerate symmetric bilinear form on
$Q$ defined by $(\alpha_i | \alpha_j) = a_{ij}$ $(i,j \in I)$.  Then
there exists a homogeneous isomorphism
$$ \ind_{\beta_1, \ldots, \beta_k}M_1\boxtimes \cdots \boxtimes M_k \cong \coind_{\beta_k, \ldots, \beta_1} (M_k \boxtimes \cdots \boxtimes M_1)\langle K \rangle.$$
\end{Thm}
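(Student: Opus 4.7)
The plan is to induct on $k$, reduce to the two-factor case, and in the base case produce an explicit isomorphism using an intertwining element $\tau_{w_0}$ associated with the longest coset representative that swaps the two blocks. Write $d_i := \HT(\beta_i)$ throughout.

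\textbf{Base case $k = 2$.} Let $A := R(\beta_1)\otimes R(\beta_2)$, embedded in $R(\beta_1+\beta_2)$ via $\iota_{\beta_1,\beta_2}$. By Proposition \ref{Prop: shuffle}, the coset $\Sigma_{d_1+d_2}/(\Sigma_{d_1}\times\Sigma_{d_2})$ is indexed by shuffles; let $w_0$ be the unique longest such representative, which sends $(1,\ldots,d_1+d_2)$ to $(d_2+1,\ldots,d_2+d_1,1,\ldots,d_2)$. Fix a reduced expression and let $\tau_{w_0}\in R(\beta_1+\beta_2)$ be the corresponding element. Since $\deg(\tau_t\,1_{\mathbf{k}}) = -a_{k_t,k_{t+1}}$, summing over the $\ell(w_0) = d_1 d_2$ crossings gives
\[
\deg\bigl(\tau_{w_0}\, 1_{\mathbf{i}\ast\mathbf{j}}\bigr) = -\sum_{a=1}^{d_1}\sum_{b=1}^{d_2} a_{i_a,j_b} = -(\beta_1|\beta_2) = K.
\]
By Lemma \ref{Lem: Frobenius reciprocity}, constructing
\[
\Phi\colon \ind_{\beta_1,\beta_2}(M_1\boxtimes M_2)\longrightarrow \coind_{\beta_2,\beta_1}(M_2\boxtimes M_1)\langle K\rangle
\]
is equivalent to specifying a homogeneous $(R(\beta_2)\otimes R(\beta_1))$-module map $\phi$ from $\res_{\beta_2,\beta_1}\ind_{\beta_1,\beta_2}(M_1\boxtimes M_2)$ to $(M_2\boxtimes M_1)\langle K\rangle$. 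I would define $\phi$ on the top shuffle component by $\tau_{w_0}\otimes(m_1\boxtimes m_2)\mapsto m_2\boxtimes m_1$ and by zero on the other shuffle components. To verify that the resulting $\Phi$ is an isomorphism, I would compare graded characters: the freeness of $R(\beta_1+\beta_2)$ as a right module over both $A$ and $R(\beta_2)\otimes R(\beta_1)$ with bases indexed by shuffles gives $\qch\bigl(\ind_{\beta_1,\beta_2}(M_1\boxtimes M_2)\bigr) = \qch\bigl(\coind_{\beta_2,\beta_1}(M_2\boxtimes M_1)\langle K\rangle\bigr)$, so a homogeneous nonzero map between finitely generated graded modules of equal graded dimension must be an isomorphism.

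\textbf{General $k$.} Induct using the associativity of induction, $\ind_{\beta_1,\ldots,\beta_k}\cong \ind_{\beta_1,\beta_2+\cdots+\beta_k}\circ(\mathrm{id}\boxtimes\ind_{\beta_2,\ldots,\beta_k})$, and its counterpart for coinduction. Applying the inductive hypothesis inside and the base case outside produces the desired isomorphism, and by bilinearity of $(\,\cdot\,|\,\cdot\,)$ the shifts add:
\[
-(\beta_1\mid \beta_2+\cdots+\beta_k) - \sum_{k\ge i > j\ge 2}(\beta_i\mid \beta_j) = -\sum_{i>j}(\beta_i\mid\beta_j) = K.
\]

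The main obstacle is the base case, specifically the well-definedness and $(R(\beta_2)\otimes R(\beta_1))$-linearity of $\phi$. This reduces to a careful calculation with the braid, quadratic, and commutation relations in \eqref{Eq:KLRA} showing that for $a\in A$ one has $a\cdot \tau_{w_0} = \tau_{w_0}\cdot\sigma(a) + (\text{terms supported on cosets of shorter length})$, where $\sigma\colon R(\beta_1)\otimes R(\beta_2)\xrightarrow{\sim}R(\beta_2)\otimes R(\beta_1)$ is the block-swap isomorphism. The shorter-length error terms are killed by the projection onto the top component, so the formula for $\phi$ descends consistently. Once the base case is in place, the grading bookkeeping and the inductive step for general $k$ are routine.
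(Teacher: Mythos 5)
First, a remark on the comparison itself: the paper offers no proof of this statement --- it is quoted verbatim from \cite{LV09}, Theorem 2.2 --- so your proposal can only be measured against the standard argument in the literature. Your outline \emph{is} that standard argument: the adjunction of Lemma \ref{Lem: Frobenius reciprocity} reduces the construction of $\Phi$ to an $R(\beta_2)\otimes R(\beta_1)$-linear map out of $\res_{\beta_2,\beta_1}\ind_{\beta_1,\beta_2}(M_1\boxtimes M_2)$; the top piece of the Mackey filtration, spanned by the longest minimal coset representative $w_0$, is the correct target; the degree count $\deg(\tau_{w_0}1_{\mathbf{i}*\mathbf{j}})=-(\beta_1|\beta_2)=K$ is right; and the reduction of general $k$ to $k=2$ via transitivity of (co)induction and bilinearity of $(\,\cdot\mid\cdot\,)$ is routine, as you say. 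You also correctly isolate the computation $a\cdot\tau_{w_0}=\tau_{w_0}\cdot\sigma(a)+(\text{lower terms})$ as the substance of well-definedness.

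The genuine gap is the final step of the base case. The assertion that a homogeneous nonzero map between finitely generated graded modules of equal graded dimension must be an isomorphism is false: for any simple $S$, the endomorphism $(a,b)\mapsto(a,0)$ of $S\oplus S$ is homogeneous and nonzero, and source and target trivially have the same graded dimension, yet it is neither injective nor surjective. Equal graded characters upgrade an injection (or a surjection) to an isomorphism, but do nothing for a map that is merely nonzero. What the actual proof must do is establish injectivity of $\Phi$ directly, using the decomposition $\ind_{\beta_1,\beta_2}(M_1\boxtimes M_2)=\bigoplus_w \tau_w\otimes(M_1\boxtimes M_2)$ over minimal length coset representatives: given $x=\sum_w\tau_w\otimes m_w\neq 0$, choose $w$ of maximal length with $m_w\neq 0$; since $\ell(w_0w^{-1})+\ell(w)=\ell(w_0)$, one has $\tau_{w_0w^{-1}}\tau_w=\tau_{w_0}+(\text{lower terms})$, while $\tau_{w_0w^{-1}}\tau_{w'}$ for the other surviving $w'$ contributes nothing to the $\tau_{w_0}$-component; hence $\phi(\tau_{w_0w^{-1}}x)\neq 0$ and so $\Phi(x)\neq 0$. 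Only after injectivity is in hand does the graded character comparison finish the proof. In other words, the leading-term analysis you flagged as the ``main obstacle'' for the well-definedness of $\phi$ has to be carried out a second time, for injectivity; it cannot be replaced by the dimension count alone.
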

When there is no ambiguity, we will write $\res, \ind, \coind $ for
$ \res_{\beta_1, \ldots ,\beta_k}, \ind_{\beta_1, \ldots \beta_k} $
and $ \coind_{\beta_1, \ldots, \beta_k}$, respectively.

We first consider the special case when $\alpha = m \alpha_i$. It is
known that $R(m\alpha_i)$ is isomorphic to the nilHecke ring $NH_m$
(see \cite[Example 2.2]{KL09}). Thus $R(m\alpha_i)$ has only one
irreducible representation
\begin{equation} \label{Eq: L(i^m)}
L(i^m) \cong \ind_{\C[x_1, \ldots, x_m]}\ \mathbf{1}
\end{equation}
up to grading shift, where $\mathbf{1}$ is the 1-dimensional trivial
module over $\C[x_1, \ldots, x_m]$. We define 
$ \qch(\mathbf{1}) :=  (i,\ldots,i) $. 
Since $\dim L(i^m)=m!$, for any $M
\in R(\alpha)$-$\fMod$ and $\mathbf{i} = ( \cdots, \underbrace{i,
\dots, i}_{m}, \cdots) \in I^\alpha$ with $\dim( 1_\mathbf{i} M ) >
0$, we have
\begin{align} \label{Eq: L(i^m):dimension}
\dim( 1_\mathbf{i} M ) \ge m! .
\end{align}

Take a nonzero element $\zeta$ in $\mathbf{1}$. Then $L(i^m)$ is
generated by $1\otimes \zeta$ and, by \cite[Theorem 2.5]{KL09},
$L(i^m)$ has a basis $\{ w \cdot 1\otimes \zeta |\ w \in
\Sigma_m\}$. Set $L^{0} = \{0\}$ and
$$ L^k := \{ v \in L(i^m)|\ x_m^k \cdot v = 0  \} \quad (k=1,2,\ldots,m). $$
Since $x_m$ commutes with all $x_i\ (i=1,\ldots,m-1)$ and $\tau_j\ (j=1,\ldots,m-2)$,
$L^k$ can be considered as $R((m-1)\alpha_i)$-module. Moreover, by a direct computation, we have
\begin{align} \label{Eq: filteration of L(i^m)}
 L^k = \{ w \tau_{m-1}\cdots \tau_{m-k+1} \cdot 1 \otimes \zeta|\ w \in \Sigma_{m-1} \}.
\end{align}
It follows that $L^k / L^{k-1}$ is isomorphic to $L(i^{m-1})$ for each $k=1, \ldots, m$.

We now return to the general case.  Let $M $ be a finite dimensional
graded $R(\alpha)$-module. For any $\beta \in Q^{+}$, set $1_{\beta}
:= \sum_{\mathbf{i} \in I^{\beta}} 1_\mathbf{i}$. For $i \in I$,
define
\begin{equation}\label{Eq:e_i}
\begin{aligned}
\Delta_{i^k} M &:= 1_{\alpha-k\alpha_i} \otimes 1_{k\alpha_i} M, \\
e_i M &:= \res_{\alpha-\alpha_i}^{\alpha-\alpha_i, \alpha_i} \circ \Delta_i M  .
\end{aligned}
\end{equation}
Then $e_i$ may be considered as a functor: $ K_0(R(\alpha)\text{-}\fMod) \to K_0(R(\alpha - \alpha_i)\text{-}\fMod) $.

\begin{Lem} \label{Lem: exact}
Let $M \in R(\alpha)\text{-}\fMod$ and $ N \in R(\beta)\text{-}\fMod$.
Then we have the following exact sequence:
$$ 0 \to \ind_{\alpha, \beta-\alpha_i}M \boxtimes e_i N \to e_i(\ind_{\alpha,\beta}M\boxtimes N)
\to \ind_{\alpha-\alpha_i, \beta} e_i M\boxtimes N \langle -\beta(h_i) \rangle \to 0.  $$
\end{Lem}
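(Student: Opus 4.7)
The proof proceeds by a Mackey-type filtration argument on the induced module. By the definition of $e_i$, we have
$$e_i\bigl(\ind_{\alpha,\beta}M\boxtimes N\bigr) \;=\; \res^{\alpha+\beta-\alpha_i,\,\alpha_i}_{\alpha+\beta-\alpha_i}\bigl(1_{\alpha+\beta-\alpha_i}\otimes 1_{\alpha_i}\bigr)\cdot \ind_{\alpha,\beta}(M\boxtimes N),$$
so the plan is to analyze the idempotent $1_{\alpha+\beta-\alpha_i}\otimes 1_{\alpha_i}$ (which projects onto words whose last letter is $i$) acting on $\ind_{\alpha,\beta}M\boxtimes N$.

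The first step is to invoke Proposition \ref{Prop: shuffle}: a basis of $\ind_{\alpha,\beta}M\boxtimes N$ is given by $\sigma\otimes(m\boxtimes n)$ where $\sigma$ runs over minimal length coset representatives of $\Sigma_{\HT\alpha}\times\Sigma_{\HT\beta}$ in $\Sigma_{\HT(\alpha+\beta)}$, equivalently over shuffles of the $M$-index and the $N$-index. I would partition these shuffles according to the origin of the strand in the last (rightmost) position: it either comes from the $N$-factor or from the $M$-factor. Call the first subspace $V_{N}$ and the second $V_{M}$; then $V_{N}\oplus V_{M}$ equals $1_{\alpha+\beta-\alpha_i}\otimes 1_{\alpha_i}$ times $\ind_{\alpha,\beta}M\boxtimes N$ on the level of vector spaces.

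The second step is to identify $V_{N}$ as an $R(\alpha+\beta-\alpha_i)\otimes R(\alpha_i)$-submodule: shuffles whose last position is fed by an $N$-strand leave the $i$-strand untouched at the end and require the last letter of the $N$-index to be $i$. Grouping such shuffles yields an isomorphism $V_{N}\cong \ind_{\alpha,\beta-\alpha_i}M\boxtimes e_iN$ with no grading shift, since no additional crossings are introduced. For the quotient $V_{M}$, a shuffle whose last position is fed by an $M$-strand requires the last letter of the $M$-index to be $i$, and the corresponding $i$-strand must be pushed through all $\HT\beta$ strands of $N$ via the $\tau_k$'s; each crossing of the $i$-strand with an $N$-strand of color $j$ contributes grading $-a_{ij}$, so the total shift is $-\sum_{j}(\alpha_i|\alpha_j)\,c_j=-(\alpha_i|\beta)=-\beta(h_i)$, giving $V_{M}\cong \ind_{\alpha-\alpha_i,\beta}(e_iM\boxtimes N)\langle -\beta(h_i)\rangle$.

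The final step is to verify that $V_{N}$ is actually an $R(\alpha+\beta-\alpha_i)\otimes R(\alpha_i)$-submodule (not merely a subspace), and that the quotient map onto $V_M$ is a module map after the grading shift. Here one uses the KLR relations \eqref{Eq:KLRA}, in particular the braid relation and the $\tau x - x\tau$ relation, to check that moving a generator of $R(\alpha+\beta-\alpha_i)\otimes R(\alpha_i)$ across a shuffle either keeps it in $V_{N}$ or passes to $V_{M}$ modulo $V_{N}$ with exactly the grading shift computed above. This compatibility check, together with the ``last letter'' dichotomy, yields the desired short exact sequence. The main technical obstacle is precisely this last bookkeeping: carefully tracking how the KLR relations (especially \emph{which} $\tau$-crossings must be inserted, and what the resulting degree contribution is) behave when the terminal $i$-strand originates in $M$ rather than $N$, and verifying that the resulting map indeed lands in $\ind_{\alpha-\alpha_i,\beta} e_iM\boxtimes N$ with the claimed shift.
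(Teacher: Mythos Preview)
Your argument is correct and is essentially the same approach as the paper's: both rest on the Mackey filtration for KLR algebras. The paper's proof consists of a single sentence citing the Khovanov--Lauda Mackey theorem \cite[Proposition 2.18]{KL09}, whereas you have unpacked that theorem in the special case $\gamma=\alpha+\beta-\alpha_i$, $\delta=\alpha_i$, correctly identifying the two-step filtration by the origin of the terminal strand and computing the grading shift $-\beta(h_i)$ from the crossing degrees.
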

\begin{proof}
Our assertion follows from the Khovanov-Lauda-Rouquier algebra version
of the Mackey's theorem \cite[Proposition 2.18.]{KL09}.
\end{proof}

\begin{Prop}\cite[Corollary 2.15]{KL09} \label{Prop: Serre}
For any finitely-generated graded $R(\alpha)$-module $M$, we have
$$ \sum_{r=0}^{1-a_{ij}} (-1)^r e_i^{(1-a_{ij}-r)} e_j e_i^{(r)}[M]=0,$$
where $e_i^{(r)}[M] = \frac{1}{[r]_q !} \ [e_i^r M]$ for $i \in I $ and $r \in \Z_{\ge0}$.
\end{Prop}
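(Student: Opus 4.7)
\emph{Plan.} The identity to prove is the categorified quantum Serre relation for the operators $e_i$ on the Grothendieck group $K_0(R)$. In type $A_n$, the Cartan entry $a_{ij}$ takes only the values $2, 0, -1$, so there are three cases. When $i=j$ we have $1-a_{ij}=-1$ and the sum is empty, hence trivially zero. When $|i-j|\ge 2$ we have $a_{ij}=0$ and the claim reduces to the commutativity $e_i e_j[M]=e_j e_i[M]$. When $|i-j|=1$ we have $a_{ij}=-1$ and the claim is the quadratic identity $e_i^{(2)} e_j[M] - e_i e_j e_i[M] + e_j e_i^{(2)}[M]=0$.

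My approach would be character-theoretic, since the $q$-character map $\qch$ distinguishes isomorphism classes in $K_0(R)$. Unwinding (\ref{Eq:e_i}), the operator $e_i$ acts on $q$-characters by stripping off a trailing $i$. Iterating and using $e_i^{(r)}[M]=[e_i^r M]/[r]_q!$, the $\mathbf{i}'$-coefficient of $\qch\bigl(e_i^{(N-r)} e_j e_i^{(r)} M\bigr)$ with $N:=1-a_{ij}$ equals
$$
\frac{\qdim\bigl(1_{\mathbf{i}' *\, i^{N-r} *\, j *\, i^{r}}\, M\bigr)}{[r]_q!\,[N-r]_q!}.
$$
Thus the assertion reduces to the numerical identity
$$
\sum_{r=0}^{N} (-1)^{r} \left[\begin{matrix} N \\ r \end{matrix}\right]_{q} \qdim\bigl(1_{\mathbf{i}' *\, i^{N-r} *\, j *\, i^{r}}\, M\bigr) = 0,
$$
for every $\mathbf{i}'$ and every finitely generated $M$.

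The key step would be to relate the $q$-dimensions $\qdim(1_{\mathbf{i}' *\, \mathbf{w}} M)$ as $\mathbf{w}$ ranges over the permutations of the word $i^{N} j$, using the KLR relations (\ref{Eq:KLRA}) and the associated grading shifts $\deg \tau_t 1_\mathbf{i}=-a_{i_t,i_{t+1}}$. For $a_{ij}=0$ the relation $\tau_t^{2} 1_\mathbf{i}=1_\mathbf{i}$ makes $\tau_t$ a degree-preserving isomorphism exchanging $1_{\ldots ij\ldots}M$ with $1_{\ldots ji\ldots}M$, which immediately gives commutativity of $e_i$ and $e_j$. For $a_{ij}=-1$, $\tau_t$ is a degree-$1$ operator satisfying $\tau_t^{2}=x_t + x_{t+1}$, and the braid relation carries a correction term; these relations must be combined with the nilHecke decomposition of $R(N\alpha_i)$ recalled in (\ref{Eq: L(i^m)})--(\ref{Eq: filteration of L(i^m)}) and the Mackey-type filtration of Lemma \ref{Lem: exact} to convert each $q$-dimension $\qdim(1_{\mathbf{i}' *\, \mathbf{w}}M)$ into a standard form. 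The main obstacle is the careful bookkeeping of grading shifts, and verifying that the resulting $q$-binomial combination telescopes to zero --- precisely the dual form of the quantum Serre identity in $U_q^{-}(\slm_{n+1})$; this argument is carried out in \cite[Corollary 2.15]{KL09}.
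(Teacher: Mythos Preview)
The paper does not prove this proposition; it is quoted verbatim from \cite[Corollary 2.15]{KL09} and used as a black box, with the paper only recording the $q=1$ specializations \eqref{Eq: Serre |i-j|>1} and \eqref{Eq: Serre |i-j|=1} immediately afterwards. So there is no ``paper's own proof'' to compare your proposal against.

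That said, your reduction is correct and is essentially how the result is proved in \cite{KL09}: the injectivity of $\qch$ on $K_0$ reduces the claim to the $q$-binomial identity on weight-space $q$-dimensions that you wrote down, and for $a_{ij}=0$ the degree-zero involution $\tau_t$ with $\tau_t^2 1_{\mathbf i}=1_{\mathbf i}$ gives the required equality directly. The $a_{ij}=-1$ case is where the content lies; your remark that it comes down to bookkeeping of grading shifts under $\tau_t$ (degree $1$) and the braid correction is accurate, though you stop short of actually carrying it out. Since you ultimately cite \cite[Corollary 2.15]{KL09} for this step, your proposal and the paper end in the same place: both defer to Khovanov--Lauda.
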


Let us reinterpret the quantum Serre relations given in  Proposition
\ref{Prop: Serre}. Let $M$ be a finite-dimensional graded
$R(\alpha)$-module. Consider the sequences $\mathbf{i}_{\{i,j
\}},\mathbf{i}_{\{j,i \}} \in I^{\alpha}$ of the form:
$$ \mathbf{i}_{\{i,j \}} := \mathbf{k}_1 * (i,j) * \mathbf{k}_2, \quad
\mathbf{i}_{\{ j,i \}} := \mathbf{k}_1 * (j,i) * \mathbf{k}_2,  $$
where $\mathbf{k}_1, \mathbf{k}_2$ are sequences satisfying
$\mathbf{k}_1
* \mathbf{k}_2 \in I^{\alpha - \alpha_i - \alpha_j}$. Suppose $|i-j|
> 1$. It follows from Proposition  \ref{Prop: Serre} that
\begin{align} \label{Eq: Serre |i-j|>1}
\dim (1_{\mathbf{i}_{\{i,j \}}} M) =  \dim (1_{\mathbf{i}_{\{j,i \}}}  M ).
\end{align}
We now consider the case $|i-j| = 1$.  Let
\begin{align*}
 \mathbf{i}_{\{i\pm 1,i,i \}} &:= \mathbf{k}_1 * (i\pm 1,i,i) * \mathbf{k}_2, \\
\mathbf{i}_{\{i,i\pm 1,i \}} &:= \mathbf{k}_1 * (i,i\pm 1,i) * \mathbf{k}_2, \\
\mathbf{i}_{\{i,i,i\pm 1 \}} &:= \mathbf{k}_1 * (i,i,i \pm 1) * \mathbf{k}_2
\end{align*}
for some sequences $\mathbf{k}_1, \mathbf{k}_2$ with $\mathbf{k}_1 *
\mathbf{k}_2 \in I^{\alpha - 2\alpha_i - \alpha_{i\pm 1}}$. Then,
from Proposition  \ref{Prop: Serre}, we have
\begin{align} \label{Eq: Serre |i-j|=1}
2 \dim (1_{\mathbf{i}_{\{i,i \pm 1, i \}}} M) =  \dim (1_{\mathbf{i}_{\{i\pm 1,i,i \}}}  M )
+ \dim (1_{\mathbf{i}_{\{i,i,i\pm 1 \}}}  M ).
\end{align}

Let $\mathfrak{B}(\infty)$ denote the set of isomorphism classes of
irreducible graded $R$-modules, and define
\begin{align*}
\wt(M) &:= -\alpha, \\
\e_i M &:= \soc\ e_i M, \\
\f_i M &:= \hd\ \ind_{\alpha, \alpha_i} M \boxtimes L(i), \\
\varepsilon_i(M) &:= \max\{ k \ge 0|\ \e_i^k M \ne 0 \} \\
\varphi_i(M) &:= \varepsilon_i(M) + \langle h_i, \wt(M) \rangle.
\end{align*}

\begin{Thm}\cite[Theorem 7.4]{LV09} \label{Thm: iso-LV B(infty)}
The sextuple $(\mathfrak{B}(\infty),\wt, \e_i, \f_i, \varepsilon_i,
\varphi_i)$ becomes a crystal, which is isomorphic to the crystal
$B(\infty)$ of $U_q^{-}(\slm_{n+1})$.
\end{Thm}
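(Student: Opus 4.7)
The plan is to verify the abstract crystal axioms for $(\mathfrak{B}(\infty),\wt,\e_i,\f_i,\varepsilon_i,\varphi_i)$ and then appeal to a uniqueness characterization of $B(\infty)$. I would follow the general strategy pioneered by Grojnowski for affine Hecke algebras and adapted to the KLR setting: first establish structural properties of the head/socle functors in order to produce a well-defined crystal structure, and then identify the resulting crystal with $B(\infty)$ via the Khovanov--Lauda categorification of $U_q^{-}(\slm_{n+1})$.

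First I would establish well-definedness of $\f_i$ and $\e_i$. For irreducible $M \in R(\alpha)\text{-}\fMod$ one must show that $\ind_{\alpha,\alpha_i} M \boxtimes L(i)$ has a simple head (so that $\f_i M$ is a single isomorphism class) and that $\soc\ e_i M$ is simple. For the head, use Frobenius reciprocity (Lemma \ref{Lem: Frobenius reciprocity}): any nonzero quotient $\ind M \boxtimes L(i) \twoheadrightarrow N$ corresponds to a nonzero map $M \boxtimes L(i) \to \res N$ and thus factors through $M \boxtimes L(i) \to \Delta_i N$. Applying the Mackey-type filtration (Lemma \ref{Lem: exact}) to $e_i(\ind M \boxtimes L(i))$ exhibits $M \boxtimes L(i)$ as the top graded piece of $\Delta_i(\ind M \boxtimes L(i))$; this forces any two simple quotients to have the same restriction and hence to be isomorphic. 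The socle statement follows by dualising via the anti-involution $\psi$ of \eqref{Eq: anti-involution}, using that finite dimensional irreducibles are self-dual.

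Next I would verify the crystal identities: $\wt(\f_i M) = \wt(M) - \alpha_i$ is immediate from the idempotent decomposition, $\e_i \f_i M \cong M$ holds unconditionally, and $\f_i \e_i M \cong M$ whenever $\e_i M \neq 0$. Both of these follow by combining Frobenius reciprocity with the filtration \eqref{Eq: filteration of L(i^m)} of $L(i^m)$: restricting $\ind M \boxtimes L(i)$ and extracting the top of the Mackey filtration isolates $M$, while $\varepsilon_i(M)$ can be read off from the multiplicities $\dim 1_{\mathbf{i}} M$ for words $\mathbf{i}$ ending in a maximal string of $i$'s (cf.\ \eqref{Eq: L(i^m):dimension}). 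The formula $\varphi_i = \varepsilon_i + \langle h_i, \wt \rangle$ is a definition, so the abstract crystal axioms are satisfied.

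Finally, to identify the resulting crystal with $B(\infty)$, let $\mathbf{1}_0 \in \mathfrak{B}(\infty)$ denote the class of the trivial $R(0)=\C$-module; clearly $\wt(\mathbf{1}_0)=0$ and $\varepsilon_i(\mathbf{1}_0)=0$ for all $i$. An induction on $\HT(\alpha)$, using that $\varepsilon_i(M)>0$ for some $i$ whenever $\alpha>0$ (since otherwise the character of $M$ would have to be trivial), shows that every irreducible $R(\alpha)$-module is obtained from $\mathbf{1}_0$ by a sequence of $\f_j$'s, so $\mathfrak{B}(\infty)$ is connected. By the universal property of $B(\infty)$ among connected highest-weight crystals of weight zero, there is a unique crystal morphism $\Phi: B(\infty) \to \mathfrak{B}(\infty)$ with $\Phi(b^{\infty}) = \mathbf{1}_0$, and surjectivity is automatic. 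The main obstacle is injectivity. To settle it I would invoke the Khovanov--Lauda categorification isomorphism $K_0(R) \otimes_{\Z[q,q^{-1}]} \Q(q) \cong U_q^{-}(\slm_{n+1})$, under which the classes of irreducibles correspond to the dual of the canonical basis, which is in bijection with $B(\infty)$. The crucial and genuinely technical step is to match the categorical operator $\hd\,\ind(-\boxtimes L(i))$ with Kashiwara's $\tilde f_i$ on the dual canonical basis; this requires controlling the leading shuffle term of $[\hd\ \ind M \boxtimes L(i)]$ in $U_q^{-}(\slm_{n+1})$ and is the essential hard step.
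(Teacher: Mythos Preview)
The paper does not give its own proof of this statement: it is quoted as \cite[Theorem 7.4]{LV09} and used as background, with no argument supplied. So there is nothing in the present paper to compare your proposal against.

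That said, your sketch is a faithful outline of the Lauda--Vazirani argument (itself modeled on Grojnowski's approach for affine Hecke algebras). The main steps you isolate---simplicity of $\hd\,\ind M\boxtimes L(i)$ and $\soc\,e_iM$ via Frobenius reciprocity and the Mackey filtration, the mutual inverse property $\e_i\f_i M\cong M$ and $\f_i\e_i M\cong M$ when $\e_iM\neq 0$, connectedness by induction on height, and the final identification using the categorification theorem---are precisely the ingredients of the cited proof. One point that could be sharpened: rather than invoking the full Khovanov--Lauda isomorphism and dual canonical bases for injectivity, Lauda and Vazirani verify Kashiwara's intrinsic characterization of $B(\infty)$ (via the embeddings $B(\infty)\hookrightarrow B(\infty)\otimes B_i$), which avoids the ``genuinely technical step'' you flag at the end. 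Your route through the categorification is correct in principle but heavier than necessary.
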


For $M \in R^\lambda(\alpha)$-$\fMod$ and $N \in R(\alpha)$-$\fMod$,
let $\infl_\lambda M$ be the inflation of $M$,
and $\pr_\lambda N $ be the quotient of $ N $ by $ I^\lambda(\alpha) N$.
Let $\mathfrak{B}(\lambda)$ denote the set of isomorphism classes of irreducible $R^\lambda$-modules, and
for $M \in R^\lambda(\alpha)$-$\fMod$, define
\begin{align*}
\wt^\lambda(M) &:= \lambda - \alpha, \\
\e_i^\lambda M &:=  \pr_\lambda \circ \e_i \circ \infl_\lambda M, \\
\f_i^\lambda M &:= \pr_\lambda \circ \f_i \circ \infl_\lambda M, \\
\varepsilon_i^\lambda(M) &:= \max\{ k \ge 0|\ (\e_i^\lambda)^k M \ne 0 \}, \\
\varphi_i^\lambda(M) &:= \varepsilon_i^\lambda(M) + \langle h_i, \wt^\lambda(M) \rangle.
\end{align*}

\begin{Thm}\cite[Theorem 7.5]{LV09} \label{Thm: iso-LV}
The sextuple $(\mathfrak{B}(\lambda),\wt^\lambda, \e_i^\lambda,
\f_i^\lambda, \varepsilon_i^\lambda, \varphi_i^\lambda)$ becomes a
crystal, which is isomorphic to the crystal $B(\lambda)$ of the
irreducible highest weight $U_q(\slm_{n+1})$-module $V(\lambda)$.
\end{Thm}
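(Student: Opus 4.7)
The plan is to bootstrap from Theorem \ref{Thm: iso-LV B(infty)} together with the standard strict crystal embedding $B(\lambda) \hookrightarrow B(\infty) \otimes \mathbf{T}^\lambda \otimes \mathbf{C}$. First I would observe that inflation gives a set-theoretic injection $\mathfrak{B}(\lambda) \hookrightarrow \mathfrak{B}(\infty)$, identifying irreducible $R^\lambda$-modules with those isomorphism classes of irreducible $R$-modules $N$ satisfying $I^\lambda(\alpha) \cdot N = 0$. Conversely, any irreducible $R$-module killed by $I^\lambda(\alpha)$ descends uniquely to an irreducible $R^\lambda(\alpha)$-module via $\pr_\lambda$, so the two sets are in bijection.

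Next I would define a candidate strict embedding
\[
\Theta : \mathfrak{B}(\lambda) \longrightarrow \mathfrak{B}(\infty) \otimes \mathbf{T}^\lambda \otimes \mathbf{C}, \qquad M \longmapsto \infl_\lambda M \otimes t_\lambda \otimes c,
\]
and verify the crystal axioms for $(\mathfrak{B}(\lambda), \wt^\lambda, \e_i^\lambda, \f_i^\lambda, \varepsilon_i^\lambda, \varphi_i^\lambda)$ by transferring them through $\Theta$ from the already-established crystal structure on $\mathfrak{B}(\infty)$. The weight identity is immediate; for the Kashiwara operators the essential claims are (i) if $I^\lambda(\alpha) \cdot N = 0$ then $I^\lambda(\alpha - \alpha_i) \cdot \e_i N = 0$, so that $\e_i^\lambda$ on $M$ agrees with $\e_i$ on $\infl_\lambda M$; and (ii) $\f_i^\lambda M = \pr_\lambda \f_i \infl_\lambda M$ is either zero or again a simple $R^\lambda$-module, with the vanishing governed by $\varphi_i^\lambda(M)$, which is exactly the behaviour dictated by the tensor product rule at the $\mathbf{T}^\lambda$-factor.

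Once $\Theta$ is known to be a strict morphism of crystals, the remaining step is to identify its image. Both $\Theta(\mathfrak{B}(\lambda))$ and the image of $B(\lambda)$ under the standard embedding are connected subcrystals of $B(\infty) \otimes \mathbf{T}^\lambda \otimes \mathbf{C}$ containing the element $T_\infty \otimes t_\lambda \otimes c$, the former because the trivial $R^\lambda(0)$-module $\mathbf{1}$ maps to it and the latter because this is precisely the image of $T_\lambda$ under $\iota_\lambda$ in \eqref{Eq: Crystal embedding}. By the uniqueness of the strict embedding of $B(\lambda)$ sending its highest weight vector to $T_\infty \otimes t_\lambda \otimes c$, the two images must coincide, yielding the desired isomorphism $\mathfrak{B}(\lambda) \cong B(\lambda)$.

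The main obstacle is verifying part (ii) above: controlling how $\pr_\lambda$ interacts with the head-of-induction construction $\f_i N = \hd(\ind N \boxtimes L(i))$. Concretely, one must show that the cyclotomic relation $x_1^{a_{i_1}} 1_\mathbf{i} = 0$ either kills $\hd(\ind \infl_\lambda M \boxtimes L(i))$ entirely or leaves it simple and annihilated by $I^\lambda$, and that the dichotomy is governed precisely by whether $\varphi_i^\lambda(M) > 0$. This requires a careful analysis of how $x_1$ acts on the induced module, using the structure of $L(i^m)$ recorded in \eqref{Eq: L(i^m)}--\eqref{Eq: filteration of L(i^m)} and Frobenius reciprocity (Lemma \ref{Lem: Frobenius reciprocity}), and constitutes the technical heart of the argument.
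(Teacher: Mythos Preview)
The paper does not prove this theorem at all: it is stated as a citation of \cite[Theorem 7.5]{LV09} and is used as a black box throughout Section~\ref{Sec: KL alg and B(lambda)}. There is therefore no ``paper's own proof'' to compare your proposal against. Your outline is a reasonable sketch of the Lauda--Vazirani argument (indeed the paper later invokes the map $M \mapsto \infl_\lambda M \otimes t_\lambda$ from \cite[(5.10)]{LV09} in the proof of Theorem~\ref{Thm: crystal iso}), but the present paper simply imports the result rather than reproving it.
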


The following lemma is an analogue of \cite[Theorem 5.5.1]{K05}.


\begin{Lem} \label{Lem: e_i and tilde e_i}
Let $M$ be an irreducible $R(\alpha)$-module. Set $\varepsilon :=
\varepsilon_i(M)$. Then we have

\begin{enumerate}
\item
$$[e_i M] = q^{-\varepsilon+1}[\varepsilon]_q[\e_i M] + \sum_k c_k [N_k],$$
 where $N_k$ are irreducible modules with
$ \varepsilon_i(N_k) < \varepsilon_i(\e_i M) = \varepsilon-1 $,
\item
$$ [e_i^{\varepsilon}M] = q^{-\frac{\varepsilon(\varepsilon-1)}{2}}[\varepsilon]_q ![\e_i^{\varepsilon}M]. $$
\end{enumerate}
\end{Lem}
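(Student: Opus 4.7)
The plan is to prove (2) first via a structural analysis of the $\alpha_i$-string of $M$, and then to derive (1) as a consequence. For (2), view $\Delta_{i^\varepsilon}M = 1_{\alpha-\varepsilon\alpha_i}\otimes 1_{\varepsilon\alpha_i}M$ as a module over the subalgebra $R(\alpha-\varepsilon\alpha_i)\otimes R(\varepsilon\alpha_i)\subset R(\alpha)$. By \eqref{Eq: L(i^m)}, $R(\varepsilon\alpha_i)\cong NH_\varepsilon$ admits $L(i^\varepsilon)$ as its unique irreducible module up to grading shift, so every irreducible subquotient of $\Delta_{i^\varepsilon}M$ has the form $V\boxtimes L(i^\varepsilon)\langle t\rangle$ with $V$ an irreducible $R(\alpha-\varepsilon\alpha_i)$-module. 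Because $\varepsilon=\varepsilon_i(M)$ is maximal, any such $V$ must have $\varepsilon_i(V)=0$; otherwise an irreducible with $\varepsilon_i(V)\geq 1$ would yield a constituent of $\e_i^{\varepsilon+1}M$, contradicting maximality. The uniqueness of $\e_i^\varepsilon M$ (obtained by iterating $\e_i=\soc\circ e_i$ on the crystal $\mathfrak{B}(\infty)$ by Theorem \ref{Thm: iso-LV B(infty)}) then forces
\begin{align*}
\Delta_{i^\varepsilon}M\cong\e_i^\varepsilon M\boxtimes L(i^\varepsilon)\langle s\rangle
\end{align*}
for a single shift $s$. Restricting to $R(\alpha-\varepsilon\alpha_i)$ gives $[e_i^\varepsilon M] = \qdim(L(i^\varepsilon)\langle s\rangle)\,[\e_i^\varepsilon M]$, and computing $\qdim L(i^\varepsilon)$ via the filtration \eqref{Eq: filteration of L(i^m)} yields the factor $q^{-\varepsilon(\varepsilon-1)/2}[\varepsilon]_q!$, establishing (2).

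For (1), any composition factor $N$ of $e_iM$ satisfies $\varepsilon_i(N)\leq\varepsilon-1$, since $\e_i^{\varepsilon_i(N)}N$ is a subquotient of $\e_i^{\varepsilon_i(N)+1}M$. Moreover, (2) implies $e_i^\varepsilon M$ has only grading shifts of $\e_i^\varepsilon M$ as constituents, so any composition factor $N$ of $e_iM$ with $\varepsilon_i(N)=\varepsilon-1$ must satisfy $\e_i^{\varepsilon-1}N\cong\e_i^\varepsilon M$ up to shift; by injectivity of $\f_i$ on the crystal $\mathfrak{B}(\infty)$, this forces $N\cong\e_iM$ up to shift. Grouping such constituents, write
\begin{align*}
[e_iM] = c\,[\e_iM] + \sum_k c_k\,[N_k]
\end{align*}
with $c\in\Z_{\geq 0}[q,q^{-1}]$ and $\varepsilon_i(N_k)<\varepsilon-1$. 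Apply $e_i^{\varepsilon-1}$; since $\varepsilon_i(N_k)<\varepsilon-1$ implies $e_i^{\varepsilon-1}N_k=0$, we obtain $[e_i^\varepsilon M]=c\,[e_i^{\varepsilon-1}\e_iM]$. Invoking (2) for both $M$ and $\e_iM$ (using $\varepsilon_i(\e_iM)=\varepsilon-1$) and solving for $c$ yields
\begin{align*}
c = \frac{q^{-\varepsilon(\varepsilon-1)/2}[\varepsilon]_q!}{q^{-(\varepsilon-1)(\varepsilon-2)/2}[\varepsilon-1]_q!} = q^{-\varepsilon+1}[\varepsilon]_q,
\end{align*}
as claimed.

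The principal technical obstacle is pinning down the grading shift $s$ in $\Delta_{i^\varepsilon}M\cong\e_i^\varepsilon M\boxtimes L(i^\varepsilon)\langle s\rangle$. This requires careful bookkeeping through the adjunction of Lemma \ref{Lem: Frobenius reciprocity} together with the graded nilHecke relations \eqref{Eq:KLRA} (where $\deg x_k=2$). A convenient strategy is to verify the base case $\varepsilon=1$ directly, where (2) reduces to $[e_iM]=[\e_iM]$, thereby fixing the normalization; the general $\varepsilon$ then follows by induction, and (1) is an immediate consequence as above.
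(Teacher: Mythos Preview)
Your overall architecture---proving (2) first and then extracting (1)---is elegant and genuinely different from the paper, which proves (1) directly via a Mackey-type analysis of $\ind(N\boxtimes L(i^\varepsilon))$ and then deduces (2). Your derivation of (1) from (2) is correct: grouping composition factors of $e_iM$ by whether $\varepsilon_i$ attains the maximum $\varepsilon-1$, applying $e_i^{\varepsilon-1}$, and dividing the two instances of (2) to solve for $c$ is clean and works.

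The problem lies in your proof of (2). You correctly argue that every composition factor of $\Delta_{i^\varepsilon}M$ has the form $V\boxtimes L(i^\varepsilon)$ with $\varepsilon_i(V)=0$. But the next step---``the uniqueness of $\e_i^\varepsilon M$ then forces $\Delta_{i^\varepsilon}M\cong\e_i^\varepsilon M\boxtimes L(i^\varepsilon)\langle s\rangle$''---does not follow. Knowing that $\e_i^\varepsilon M$ is the unique crystal element with $\f_i^\varepsilon(\e_i^\varepsilon M)=M$ tells you nothing, a priori, about \emph{arbitrary} composition factors of $\Delta_{i^\varepsilon}M$: for a general subquotient $V\boxtimes L(i^\varepsilon)$ there is no map $\ind(V\boxtimes L(i^\varepsilon))\to M$ available, so you cannot conclude $\f_i^\varepsilon V\cong M$. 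What you need is precisely that $\Delta_{i^\varepsilon}M$ is itself irreducible as an $R(\alpha-\varepsilon\alpha_i)\otimes R(\varepsilon\alpha_i)$-module. This is a nontrivial structural fact---it is \cite[Lemma 3.8]{KL09}, which the paper invokes as a black box. Without it (or an equivalent argument), your proof of (2) has a genuine gap.

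A secondary issue, which you flag yourself, is the determination of the shift $s$. Your proposed inductive fix is reasonable in spirit, but note that once you grant \cite[Lemma 3.8]{KL09} the shift is already pinned down: that lemma gives $\Delta_{i^\varepsilon}M\cong N\boxtimes L(i^\varepsilon)$ with no shift, and then $[e_i^\varepsilon M]=\qdim(L(i^\varepsilon))\,[N]$ with $\qdim(L(i^\varepsilon))=q^{-\varepsilon(\varepsilon-1)/2}[\varepsilon]_q!$ by \eqref{Eq: filteration of L(i^m)}. So both gaps are filled by the same citation; add it and your argument goes through, yielding a pleasant alternative to the paper's route.
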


\begin{proof}
Since the assertion (2) follows from the assertion (1) immediately, it suffices to prove (1).
By in \cite[Lemma 3.8]{KL09},
$$\Delta_{i^\varepsilon} M \cong N \boxtimes L(i^\varepsilon)$$
for some irreducible $N \in R(\alpha - \varepsilon \alpha_i)$-$\fMod$ with $\varepsilon_i(N) = 0$. Then we have
$$  N \boxtimes L(i^\epsilon) \buildrel \sim \over \longrightarrow \Delta_{i^\epsilon} M \subset  \res_{\alpha - \varepsilon \alpha_i, \varepsilon \alpha_i} M, $$
which yields
$$ 0 \longrightarrow K \longrightarrow \ind_{\alpha - \varepsilon \alpha_i, \varepsilon \alpha_i} N \boxtimes L(i^\epsilon)  \longrightarrow
M \longrightarrow 0$$
for some $R(\alpha)$-module $K$. Note that $\varepsilon_i(K) < \varepsilon$.

On the other hand, it follows from
$\eqref{Eq: L(i^m)}$ and $\eqref{Eq: filteration of L(i^m)}$ that
\begin{align*}
[\Delta_i L(i^\varepsilon)] = q^{-\varepsilon+1}[\varepsilon]_q [L(i^{\varepsilon-1}) \boxtimes L(i)].
\end{align*}
Since $\varepsilon_i(N) = 0$, it follows from \cite[Proposition 2.18]{KL09} that
$$[\Delta_i \ind_{\alpha - \varepsilon \alpha_i, \varepsilon \alpha_i} N \boxtimes L(i^\epsilon)]
 = q^{-\varepsilon+1}[\varepsilon]_q [\ind_{\alpha-\varepsilon\alpha_i, (\varepsilon-1)\alpha_i, \alpha_i }^{\alpha - \alpha_i, \alpha_i}
N \boxtimes L(i^{\varepsilon-1}) \boxtimes L(i)].$$
By \cite[Lemma 3.9]{KL09} and \cite[Lemma 3.13]{KL09}, we obtain
$$ \hd  (\ind_{\alpha-\varepsilon\alpha_i, (\varepsilon-1)\alpha_i, \alpha_i }^{\alpha - \alpha_i, \alpha_i}
N \boxtimes L(i^{\varepsilon-1}) \boxtimes L(i) )  \cong
(\f_i^{\varepsilon-1}N) \boxtimes L(i) \cong \e_i M \boxtimes L(i) ,
$$ and all the other composition factors of $
\ind_{\alpha-\varepsilon\alpha_i, (\varepsilon-1)\alpha_i, \alpha_i
}^{\alpha - \alpha_i, \alpha_i} N \boxtimes L(i^{\varepsilon-1})
\boxtimes L(i)$ are of the form $L \boxtimes L(i)$ with
$\varepsilon_i(L) < \varepsilon-1$. Moreover, since
$\varepsilon_i(K) < \varepsilon$, all composition factors of
$\Delta_i(K)$ are of the form $L\boxtimes L(i)$ with
$\varepsilon_i(L) < \varepsilon-1$. Therefore, we obtain
$$[e_i M] =
q^{-\varepsilon+1}[\varepsilon]_q[\e_i M] + \sum_k c_k [N_k],$$ where $N_k$ are
irreducible modules with $ \varepsilon_i(N_k) < \varepsilon_i(\e_i
M) = \varepsilon-1 $.
\end{proof}

The following lemmas are analogues of \cite[Proposition 8, Proposition 9]{V02}, which will play crucial roles
in proving our main theorem.

\begin{Lem} \label{Lem: char}
For $\beta_1, \ldots, \beta_k \in Q^+$, let $\gamma_i$ be a 1-dimensional graded
$R(\beta_i)$-module.
\begin{enumerate}
\item If $Q$ is any graded quotient of $\ind_{\beta_1, \cdots, \beta_k } \gamma_1\boxtimes \cdots \boxtimes \gamma_k$,
then $\ch Q$ contains
$$\ch(\gamma_1) \ast \cdots \ast \ch(\gamma_k) .$$
\item If $L$ is any graded submodule of $\ind_{\beta_1, \cdots, \beta_k } \gamma_1\boxtimes \cdots \boxtimes \gamma_k$,
then $\ch L$ contains
$$\ch(\gamma_k) \ast \cdots \ast \ch(\gamma_1) .$$
\end{enumerate}
\end{Lem}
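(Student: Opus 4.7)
The plan is to exploit Frobenius reciprocity (Lemma \ref{Lem: Frobenius reciprocity}) together with the induction/coinduction duality (Theorem \ref{Thm: ind and coind}), reducing both claims to the structural observation that a $1$-dimensional graded $R(\beta_i)$-module $\gamma_i$ is supported on a unique idempotent. Indeed, because $\{1_{\mathbf{j}}\}_{\mathbf{j}\in I^{\beta_i}}$ is a complete orthogonal decomposition of $1\in R(\beta_i)$ and $\dim_\C\gamma_i = 1$, there is exactly one $\mathbf{i}_i\in I^{\beta_i}$ with $1_{\mathbf{i}_i}\gamma_i = \gamma_i$, so $\ch\gamma_i = \mathbf{i}_i$ and $\ch\gamma_1\ast\cdots\ast\ch\gamma_k = \mathbf{i}_1\ast\cdots\ast\mathbf{i}_k$ is a single monomial. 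Hence part (1) amounts to exhibiting a nonzero vector of $1_{\mathbf{i}_1\ast\cdots\ast\mathbf{i}_k}Q$, and part (2) amounts to exhibiting a nonzero vector of $1_{\mathbf{i}_k\ast\cdots\ast\mathbf{i}_1}L$.

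For (1), I would pick nonzero $v_i\in\gamma_i$ and set $v := v_1\otimes\cdots\otimes v_k$, which spans $M := \gamma_1\boxtimes\cdots\boxtimes\gamma_k$ over $\C$. Then $1\otimes v\in\ind M$ satisfies $1_{\mathbf{i}_1\ast\cdots\ast\mathbf{i}_k}(1\otimes v)=1\otimes v$, and it generates $\ind M$ as an $R(\beta)$-module because $1\otimes M = \C(1\otimes v)$. For any nonzero graded surjection $\pi:\ind M\twoheadrightarrow Q$, the image $\pi(1\otimes v)$ cannot vanish (else the unique generator of $\ind M$ would go to $0$, forcing $Q=0$), so $\pi(1\otimes v)$ is a nonzero element of $1_{\mathbf{i}_1\ast\cdots\ast\mathbf{i}_k}Q$, yielding the claimed character containment.

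For (2), I would first apply Theorem \ref{Thm: ind and coind} to obtain an $R(\beta)$-module isomorphism $\ind_{\beta_1,\ldots,\beta_k}(\gamma_1\boxtimes\cdots\boxtimes\gamma_k)\cong \coind_{\beta_k,\ldots,\beta_1}(\gamma_k\boxtimes\cdots\boxtimes\gamma_1)\langle K\rangle$ for some integer shift $K$; the shift is immaterial for the ungraded character $\ch$. Writing $M' := \gamma_k\boxtimes\cdots\boxtimes\gamma_1$ and viewing $L$ as a nonzero submodule of $\coind M'$, the second half of Lemma \ref{Lem: Frobenius reciprocity} converts the inclusion $L\hookrightarrow\coind M'$ into a nonzero $R(\beta_k)\otimes\cdots\otimes R(\beta_1)$-linear map $\tilde\iota:\res_{\beta_k,\ldots,\beta_1}L\to M'$. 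Because $M'$ is $1$-dimensional and supported on $1_{\mathbf{i}_k\ast\cdots\ast\mathbf{i}_1}$, any $\ell\in L$ with $\tilde\iota(\ell)\neq 0$ satisfies $\tilde\iota(1_{\mathbf{i}_k\ast\cdots\ast\mathbf{i}_1}\ell)=\tilde\iota(\ell)\neq 0$, so $1_{\mathbf{i}_k\ast\cdots\ast\mathbf{i}_1}\ell$ is a nonzero element of $1_{\mathbf{i}_k\ast\cdots\ast\mathbf{i}_1}L$.

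The only genuine bookkeeping obstacle is in part (2): one must match the reversed tensor order appearing on the coinduction side of Theorem \ref{Thm: ind and coind} with the reversed product $\ch\gamma_k\ast\cdots\ast\ch\gamma_1$ appearing in the conclusion. Once the adjunction is set up correctly both parts collapse to the same style of generator-chasing, and one-dimensionality of the $\gamma_i$ ensures that no multiplicities complicate the character comparison.
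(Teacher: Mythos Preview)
Your proposal is correct and follows essentially the same route as the paper: part (2) is identical (Theorem \ref{Thm: ind and coind} followed by the coinduction adjunction in Lemma \ref{Lem: Frobenius reciprocity}), and your part (1) is just the concrete unpacking of the paper's abstract invocation of Frobenius reciprocity --- the nonzero element of $\HOM(\gamma_1\boxtimes\cdots\boxtimes\gamma_k,\res Q)$ that the paper obtains is exactly the map $v\mapsto\pi(1\otimes v)$ you track by hand.
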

\begin{proof}

It follows from Lemma \ref{Lem: Frobenius reciprocity} that
$$ \HOM_{R(\beta_1)\otimes \cdots \otimes  R(\beta_k)} (  \gamma_1\boxtimes \cdots \boxtimes \gamma_k,\
\res_{\beta_1, \ldots, \beta_k} Q )$$
is nontrivial, which implies that $\ch Q$ contains the concatenation
$\ch(\gamma_1) \ast \cdots \ast \ch(\gamma_k) $.

Consider now the assertion (2). By Lemma \ref{Lem: Frobenius reciprocity} and Theorem \ref{Thm: ind and coind}, we have
\begin{align*}
& \quad  \HOM_{R(\beta_1 + \cdots + \beta_k)}(L,\  \ind_{\beta_1, \cdots, \beta_k } \gamma_1\boxtimes \cdots \boxtimes \gamma_k)\\
&\cong
\HOM_{R(\beta_1 + \cdots + \beta_k)}(L,\  \coind_{\beta_k, \cdots, \beta_1 } \gamma_k\boxtimes \cdots \boxtimes \gamma_1)\\
& \cong
\HOM_{R(\beta_k) \otimes \cdots \otimes R(\beta_1)}(\res_{\beta_k, \ldots, \beta_1} L,\
\gamma_k\boxtimes \cdots \boxtimes \gamma_1).
\end{align*}
Since the above spaces are non-trivial, the assertion (2) follows.
\end{proof}

\begin{Lem} \label{Lem: irreduciblity}
Let $\beta_1, \ldots, \beta_k \in Q^+$ and let $M$ be an irreducible
$R(\beta_1) \otimes \cdots \otimes R(\beta_k)$-module. Assume that
$\mathbf{i} \in I^{\beta_1+\cdots \beta_k}$ appears in
$\ch(\ind_{\beta_1, \ldots, \beta_k}M)$ with coefficient $m$.
\begin{enumerate}
\item  Suppose that $\mathbf{i}$ occurs with coefficient $m$
in the character of any submodule of $\ind_{\beta_1, \ldots, \beta_k}M$.
Then $\soc \ind_{\beta_1, \ldots, \beta_k}M $ is irreducible and occurs
with multiplicity one as a composition factor of $\ind_{\beta_1, \ldots, \beta_k}M $.
If $\mathbf{i}$ occurs in $\ch(\hd \ind_{\beta_1, \ldots, \beta_k}M )$,
then $\ind_{\beta_1, \ldots, \beta_k}M$ is irreducible.
\item  Suppose that $\mathbf{i}$ occurs with coefficient $m$
in the character of any quotient of $\ind_{\beta_1, \ldots, \beta_k}M$.
Then $\hd \ind_{\beta_1, \ldots, \beta_k}M $ is irreducible and occurs
with multiplicity one as a composition factor of $\ind_{\beta_1, \ldots, \beta_k}M $.
If $\mathbf{i}$ occurs in $\ch(\soc \ind_{\beta_1, \ldots, \beta_k}M )$,
then $\ind_{\beta_1, \ldots, \beta_k}M$ is irreducible.
\end{enumerate}
\end{Lem}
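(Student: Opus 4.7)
The plan is to prove both parts by a character-multiplicity argument. I would use the additivity of $\ch$ twice: first on the semisimple socle (respectively head) of $\ind_{\beta_1, \ldots, \beta_k} M$ to show that it is irreducible, and then along a composition series of $\ind_{\beta_1, \ldots, \beta_k} M$ to establish the multiplicity-one and irreducibility claims. Part (2) should then follow either from a symmetric argument or, equivalently, from part (1) via the identification of $\ind$ with a shifted $\coind$ in Theorem \ref{Thm: ind and coind} together with Lemma \ref{Lem: Frobenius reciprocity}, which swaps the roles of submodule and quotient.

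For part (1), I would begin by decomposing $\soc\,\ind_{\beta_1,\ldots,\beta_k} M = \bigoplus_\alpha L_\alpha$ as a direct sum of simple submodules. Each $L_\alpha$ is a nonzero submodule of $\ind_{\beta_1, \ldots, \beta_k} M$, so the hypothesis gives $[\ch L_\alpha : \mathbf{i}] = m$, while $\soc\,\ind_{\beta_1, \ldots, \beta_k} M$ itself is a submodule with $\mathbf{i}$-coefficient $m$. This forces a single summand, so $L := \soc\,\ind_{\beta_1, \ldots, \beta_k} M$ is simple. Writing $\ch(\ind_{\beta_1, \ldots, \beta_k} M) = \sum_j c_j \ch(S_j)$ over distinct simple composition factors and pairing with $\mathbf{i}$, the $L$-term alone contributes $c_L \cdot m$; comparison with the total coefficient $m$ yields $c_L = 1$ and $[\ch S_j : \mathbf{i}] = 0$ for every other $S_j$. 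If moreover $\mathbf{i}$ appears in $\ch(\hd\,\ind_{\beta_1, \ldots, \beta_k} M)$, then some simple summand $H$ of the head has $\mathbf{i}$ in its character, whence $H \cong L$ by the previous sentence. Thus $L$ is simultaneously the unique simple submodule, a simple quotient, and a composition factor of multiplicity one; the kernel of any surjection $\ind_{\beta_1, \ldots, \beta_k} M \twoheadrightarrow L$ has no $L$ composition factor, yet would contain $\soc\,\ind_{\beta_1, \ldots, \beta_k} M = L$ if nonzero, forcing the kernel to vanish and $\ind_{\beta_1, \ldots, \beta_k} M \cong L$.

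Part (2) is obtained by the mirror argument: decompose $\hd\,\ind_{\beta_1, \ldots, \beta_k} M = \bigoplus_\alpha Q_\alpha$ as a sum of simple quotients, apply the quotient hypothesis to each to reduce to a single summand, and rerun the composition-factor bookkeeping to pin down a simple head of multiplicity one. I expect the main subtlety to lie not in any individual step but in the final irreducibility clause, where one must combine ``unique simple socle'', ``simple head summand'', and ``multiplicity one as composition factor'' to force $\ind_{\beta_1, \ldots, \beta_k} M \cong L$; keeping track of whether the ``submodule'' or the ``quotient'' half of the hypothesis is operative at each stage, and in which part of the lemma, requires some care but is otherwise mechanical.
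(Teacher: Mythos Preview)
Your proposal is correct and follows essentially the same approach as the paper's own proof. The paper argues more tersely---it takes a simple summand of the socle, observes its $\mathbf{i}$-coefficient is already $m$ so the socle is simple, says multiplicity one follows ``in a similar manner'', and for the final clause notes that the socle exhausts the total $\mathbf{i}$-multiplicity so the head cannot have $\mathbf{i}$ unless the module is irreducible---but your explicit decomposition of the socle, composition-series bookkeeping, and kernel argument unpack exactly this reasoning.
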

\begin{proof}
Let $L$ be a component of $\soc \ind_{\beta_1, \ldots, \beta_k}M $.
By hypothesis, $\ch L$ contains $\mathbf{i}$ with coefficient $m$ as
a term. Since $\mathbf{i}$ occurs with coefficient $m$ in the
character of any submodule, $\soc \ind_{\beta_1, \ldots, \beta_k}M $
should be irreducible. In a similar manner, one can show that $\soc
\ind_{\beta_1, \ldots, \beta_k}M $ occurs with multiplicity one as a
composition factor of $\ind_{\beta_1, \ldots, \beta_k}M $. Suppose
that $\mathbf{i}$ occurs in $\ch(\hd \ind_{\beta_1, \ldots,
\beta_k}M )$. Since the multiplicity of $\mathbf{i}$ in $\soc
\ind_{\beta_1, \ldots, \beta_k}M $ is equal to the multiplicity of
$\mathbf{i}$ in $ \ind_{\beta_1, \ldots, \beta_k}M $,
$\ind_{\beta_1, \ldots, \beta_k}M$ should be irreducible.

The assertion (2) can be proved in a similar manner.
\end{proof}

\vskip 3em

\section{Irreducible $R^{\lambda}$-modules and semistandard tableaux} \label{Sec: KL alg and B(lambda)}

In this section, we prove the main results of our paper. We give an
explicit construction of irreducible graded $R^{\lambda}$-modules
(Theorem \ref{Thm: hd indT is irr}) and show that there exists an
explicit crystal isomorphism $\Phi_{\lambda}: \mathbf{B}(\lambda)
\rightarrow \mathfrak{B}(\lambda)$ (Theorem \ref{Thm: crystal iso}). 
From now on, isomorphisms of modules are allowed to be homogeneous.

For $a, \ell \in \Z_{>0}$ with $a + \ell-1 \le n $,
let
\begin{align*}
\alpha_{(a;\ell)} &:=  \alpha_{a} + \alpha_{a+1} + \cdots + \alpha_{a + \ell-1} \in Q^+, \\
\mathbf{i}_{(a;\ell)} &:= (a, a+1, \ldots, a+\ell-1) \in I^{\alpha_{(a;\ell)}}.
\end{align*}
Define $ \ms_{(a;\ell)}$
to be the 1-dimensional $R(\alpha_{(a;\ell)})$-module $\C v$ given by
\begin{align} \label{Eq: def of ms}
x_i v = 0,\quad \tau_j v = 0, \quad  1_{\mathbf{i}} v = \left\{
                                                           \begin{array}{ll}
                                                             v & \hbox{ if } \mathbf{i} = \mathbf{i}_{(a;\ell)}, \\
                                                             0 & \hbox{otherwise}.
                                                           \end{array}
                                                         \right.
\end{align}
The module $\ms_{(a;\ell)}$ can be visualized as follows: \vskip 1em
\begin{center}
\begin{texdraw}
\drawdim em \setunitscale 0.15  \linewd 0.6
\arrowheadtype t:F

\move(10 0) \lvec(10 -45)  
\linewd 0.3
\move(8 -38) \lvec(12 -38)  \htext(3 -39){\scriptsize$a$}
\move(8 -28) \lvec(12 -28)  \htext(-5 -29){\scriptsize$a+1$}
\move(8 -18)   \htext(0 -19){\scriptsize$\vdots$}
\move(8 -8) \lvec(12 -8)  \htext(-15 -9){\scriptsize$a+\ell-1$}

\move(30 -8)
\lvec(30 -15)
\move(30 -22)
\lvec(30 -38)
\htext(28.1 -10){$\bullet$}
\htext(28.9 -22){$\vdots$}
\htext(28.1 -30){$\bullet$}
\htext(28.1 -40){$\bullet$}

\lpatt(0.5 1)
\move(30 -8)\clvec(30 -8)(35 -10)(35 -23)
\move(30 -38)\clvec(30 -38)(35 -35)(35 -23)
\htext(37 -25){$\ell$}

\end{texdraw}
\end{center}
For simplicity, set $\ms_{(a; 0)} := \C$. Note that $\ms_{(a;\ell)}$ is graded and $\ch \ms_{(a;\ell)} = \mathbf{i}_{(a;\ell)}$.

Let $\mu = (\mu_1\ge \cdots\ge \mu_r > 0)$ be a Young diagram, and $k \in \Z_{>0}$.
Suppose that $  k + \mu_1  - 1 \le n $. Define
\begin{equation} \label{Eq: 1-dim rep from YD}
\begin{aligned}
\alpha_{\mu}[k] & := \alpha_{(k;\mu_1)} + \cdots +
\alpha_{(k;\mu_r)} \in Q^+,\\
 \ms_{\mu}[k] &:=  \ms_{(k;\mu_1)} \boxtimes \ms_{(k;\mu_2)}
\boxtimes \cdots \boxtimes \ms_{(k;\mu_r)}, \\
\tms_{\mu}[k] &:=  \ms_{(k;\mu_r)} \boxtimes \ms_{(k;\mu_{r-1})}
\boxtimes \cdots \boxtimes \ms_{(k;\mu_1)}.
\end{aligned}
\end{equation}


Pictorially, the modules $ \ms_{\mu}[k]$ and $\tms_{\mu}[k]$ may be
viewed as follows: \vskip 1em
\begin{center}
\begin{texdraw}
\drawdim em \setunitscale 0.15  \linewd 0.6
\arrowheadtype t:F

\move(10 0) \lvec(10 -55)
\linewd 0.3
\move(8 -48) \lvec(12 -48)  \htext(3 -49){\scriptsize$k$}
\move(8 -38) \lvec(12 -38)  \htext(-5 -40){\scriptsize$k+1$}
 \htext(0 -25){\scriptsize$\vdots$}
\move(8 -8) \lvec(12 -8)  \htext(-18 -10){\scriptsize$k+\mu_1-1$}

\move(30 -8)
\lvec(30 -15)
\move(30 -32)
\lvec(30 -48)
\htext(28.3 -10){$\bullet$}
\htext(28.9 -26){$\vdots$}
\htext(28.3 -40){$\bullet$}
\htext(28.3 -50){$\bullet$}


\move(50 -23)
\lvec(50 -30)
\move(50 -42)
\lvec(50 -48)
\htext(48.3 -25){$\bullet$}
\htext(49.1 -39){$\vdots$}
\htext(48.3 -50){$\bullet$}

\htext(68 -40){\large$\cdots$}


\move(83 -33)
\lvec(83 -37)
\move(83 -45)
\lvec(83 -48)
\htext(81.3 -35){$\bullet$}
\htext(82.1 -45){$\vdots$}
\htext(81.3 -50){$\bullet$}

\htext(95 -50){,}

\linewd 0.2
\move(30 -55)\clvec(30 -55)(35 -58)(55 -58)
\move(83 -55)\clvec(83 -55)(78 -58)(55 -58)
\htext(51 -65){$\uparrow$}
\htext(45 -73){$\ms_{\mu}[k]$}
\linewd 0.3


\linewd 0.6
\move(140 0) \lvec(140 -55)
\linewd 0.3
\move(138 -48) \lvec(142 -48)  \htext(133 -49){\scriptsize$k$}
\move(138 -38) \lvec(142 -38)  \htext(125 -40){\scriptsize$k+1$}
 \htext(130 -25){\scriptsize$\vdots$}
\move(138 -8) \lvec(142 -8)  \htext(112 -10){\scriptsize$k+\mu_1-1$}


\move(153 -33)
\lvec(153 -37)
\move(153 -45)
\lvec(153 -48)
\htext(151.3 -35){$\bullet$}
\htext(152.1 -45){$\vdots$}
\htext(151.3 -50){$\bullet$}

\htext(169 -40){\large$\cdots$}


\move(186 -23)
\lvec(186 -30)
\move(186 -42)
\lvec(186 -48)
\htext(184.3 -25){$\bullet$}
\htext(185.1 -39){$\vdots$}
\htext(184.3 -50){$\bullet$}

\move(206 -8)
\lvec(206 -15)
\move(206 -32)
\lvec(206 -48)
\htext(204.3 -10){$\bullet$}
\htext(204.9 -26){$\vdots$}
\htext(204.3 -40){$\bullet$}
\htext(204.3 -50){$\bullet$}

\linewd 0.2
\move(153 -55)\clvec(153 -55)(158 -58)(180 -58)
\move(206 -55)\clvec(206 -55)(201 -58)(180 -58)
\htext(178 -65){$\uparrow$}
\htext(170 -73){$\tms_{\mu}[k]$}
\linewd 0.3


\lpatt(0.5 1)
\move(30 -8)\clvec(30 -8)(35 -10)(35 -25)
\move(30 -48)\clvec(30 -48)(35 -45)(35 -25)
\htext(37 -28){$\mu_1$}

\move(50 -23)\clvec(50 -23)(55 -25)(55 -35)
\move(50 -48)\clvec(50 -48)(55 -45)(55 -35)
\htext(57 -38){$\mu_2$}

\move(83 -33)\clvec(83 -33)(86 -35)(86 -40)
\move(83 -48)\clvec(83 -48)(86 -45)(86 -40)
\htext(88 -43){$\mu_r$}


\move(153 -33)\clvec(153 -33)(156 -35)(156 -40)
\move(153 -48)\clvec(153 -48)(156 -45)(156 -40)
\htext(158 -43){$\mu_r$}

\move(186 -23)\clvec(186 -23)(191 -25)(191 -35)
\move(186 -48)\clvec(186 -48)(191 -45)(191 -35)
\htext(193 -38){$\mu_2$}

\move(206 -8)\clvec(206 -8)(211 -10)(211 -25)
\move(206 -48)\clvec(206 -48)(211 -45)(211 -25)
\htext(213 -28){$\mu_1$}

\end{texdraw}
\end{center}
\vskip 0.5em

One of the key ingredients of the proof of Theorem \ref{Thm: crystal
iso} is the fact that $\ind \ms_{\mu}[k]$ is irreducible for any
Young diagram $\mu$ and $k\in \Z_{>0}$. To prove this, we need
several lemmas. The following lemma may be obtained by translating
the {\it linking rule} given in \cite[Lemma 4]{V02} into the
language of Khovanov-Lauda-Rouquier algebras.

\begin{Lem} \label{Lem: linking}
 Let $a_i, \ell_i \in \Z_{>0}$ with $a_i + \ell_i-1 \le n \ (i=1,2)$.
\begin{enumerate}
\item If $a_1+\ell_1 - 1 < a_2$,
then
$$ \ind \ms_{(a_1;\ell_1)} \boxtimes \ms_{(a_2;\ell_2)} \cong \ind \ms_{(a_2;\ell_2)}
\boxtimes \ms_{(a_1;\ell_1)} ,$$
and $\ind \ms_{(a_1;\ell_1)} \boxtimes \ms_{(a_2;\ell_2)}$ is irreducible.
\item If $a_2 \ge a_1$ and $a_1+\ell_1 \ge a_2 + \ell_2$, then
$$ \ind \ms_{(a_1;\ell_1)} \boxtimes \ms_{(a_2;\ell_2)} \cong
\ind \ms_{(a_2;\ell_2)} \boxtimes \ms_{(a_1;\ell_1)},$$
and $\ind \ms_{(a_1;\ell_1)} \boxtimes \ms_{(a_2;\ell_2)}$ is irreducible.

\end{enumerate}
\end{Lem}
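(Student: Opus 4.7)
The plan is to prove both the isomorphism and irreducibility via a character analysis combined with the irreducibility criterion of Lemma~\ref{Lem: irreduciblity}, in the spirit of \cite{V02}. Set $I_{12}:=\ind\ms_{(a_1;\ell_1)}\boxtimes\ms_{(a_2;\ell_2)}$ and $I_{21}:=\ind\ms_{(a_2;\ell_2)}\boxtimes\ms_{(a_1;\ell_1)}$. Since each $\ms_{(a_i;\ell_i)}$ is one-dimensional with character $\mathbf{i}_{(a_i;\ell_i)}$, Proposition~\ref{Prop: shuffle} realizes $\ch I_{12}=\ch I_{21}$ as the same formal shuffle sum, so $\dim I_{12}=\dim I_{21}$; once one of the modules is shown to be irreducible, the isomorphism will follow from any nonzero homomorphism between them.

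To establish irreducibility, I would pinpoint a sequence $\mathbf{i}^{*}$ of multiplicity exactly one in $\ch I_{12}$ for which Lemma~\ref{Lem: char} forces the same multiplicity on every nonzero submodule or quotient, so that Lemma~\ref{Lem: irreduciblity} gives irreducibility of the corresponding socle or head. In case~(1), disjointness of the intervals $[a_1,a_1+\ell_1-1]$ and $[a_2,a_2+\ell_2-1]$ means that every entry unambiguously belongs to one of the two sequences, so each shuffle is realized uniquely; in particular both concatenations $\mathbf{i}_{(a_1;\ell_1)}*\mathbf{i}_{(a_2;\ell_2)}$ and $\mathbf{i}_{(a_2;\ell_2)}*\mathbf{i}_{(a_1;\ell_1)}$ have multiplicity one. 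In case~(2), a short combinatorial analysis of the positions of the boundary entries $a_1$ (when $a_1<a_2$) or $a_1+\ell_1-1$ (when $a_1=a_2$ and $\ell_1>\ell_2$) shows that one of the two concatenations is still uniquely realized; the degenerate sub-case $a_1=a_2$ with $\ell_1=\ell_2$, in which the two inducing modules actually coincide, will be handled by induction on $\ell_1$ via Lemma~\ref{Lem: exact}, reducing to strictly shorter configurations.

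The isomorphism is then produced by Frobenius reciprocity (Lemma~\ref{Lem: Frobenius reciprocity}): one sends the generator $v_1\boxtimes v_2$ to $\tau_{w_0}\cdot(v_2\boxtimes v_1)\in\res_{\alpha_{(a_1;\ell_1)},\,\alpha_{(a_2;\ell_2)}}I_{21}$, where $w_0$ is the minimal-length coset representative that swaps the two blocks. The hypotheses of cases~(1) and~(2) are exactly what is needed for the defining relations~\eqref{Eq: def of ms} of $\ms_{(a_1;\ell_1)}\boxtimes\ms_{(a_2;\ell_2)}$ to be preserved: consecutive strands across the swap satisfy either $|i-j|>1$ (giving the simple commutation $\tau^2\cdot 1_{\mathbf{i}}=1_{\mathbf{i}}$), equality, or $|i-j|=1$, and the resulting $(x_t+x_{t+1})$-terms in~\eqref{Eq:KLRA} vanish because $x$ acts as zero on the $\ms$-modules. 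Since the induced homomorphism is nonzero and both modules are irreducible of equal dimension, it is an isomorphism.

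The main obstacle is the degenerate sub-case $a_1=a_2$ with $\ell_1=\ell_2$ in case~(2), where no single shuffle sequence has multiplicity one, so Lemma~\ref{Lem: irreduciblity} cannot be applied directly. Here the inductive step uses Lemma~\ref{Lem: exact}: applying $e_{a_1+\ell_1-1}$ to $I_{12}$ produces a short exact sequence whose outer terms are induced products involving the strictly shorter modules $\ms_{(a;\ell-1)}$, to which the multiplicity-one criterion applies by the inductive hypothesis; pulling irreducibility back through this exact sequence, together with Lemma~\ref{Lem: e_i and tilde e_i}, yields irreducibility of $I_{12}$ in the remaining case.
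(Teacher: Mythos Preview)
Your plan has a genuine gap in the passage from ``socle or head is simple'' to ``the induced module is simple''. Lemma~\ref{Lem: char} gives you two \emph{different} sequences: the forward concatenation $\mathbf{i}_{(a_1;\ell_1)}*\mathbf{i}_{(a_2;\ell_2)}$ is forced into every quotient of $I_{12}$, while the reversed one is forced into every submodule. In case~(2) these need not both have multiplicity one; for example with $a_1=1,\ \ell_1=3,\ a_2=2,\ \ell_2=2$ the forward concatenation $(1,2,3,2,3)$ already occurs twice as a shuffle. So Lemma~\ref{Lem: irreduciblity} yields only that $\hd I_{12}$ (or $\soc I_{12}$) is simple, and you never justify the sentence ``both modules are irreducible''. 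Your explicit intertwiner, even if well defined, would then at best identify a simple head with a simple socle, not prove $I_{12}\cong I_{21}$. The inductive treatment of the degenerate sub-case $a_1=a_2,\ \ell_1=\ell_2$ inherits the same defect, since it rests on the non-degenerate case.

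The paper closes this gap by a different mechanism. In case~(1) it does not stop at a single multiplicity-one word: using the Serre identity~\eqref{Eq: Serre |i-j|>1} it shows that \emph{every} shuffle is forced into $\ch Q$ for any quotient $Q$, so no proper quotient exists. In case~(2) the key idea, absent from your proposal, is to abandon multiplicity-one words altogether and work with
\[
\mathbf{k}=(a_1,\ldots,a_2,a_2,a_2{+}1,a_2{+}1,\ldots,a_2{+}\ell_2{-}1,a_2{+}\ell_2{-}1,\ldots,a_1{+}\ell_1{-}1),
\]
which has multiplicity exactly $2^{\ell_2}$ in $\ch I_{12}$. Starting from the concatenation in any quotient (respectively, the reversed concatenation in any submodule), repeated application of~\eqref{Eq: Serre |i-j|>1} and~\eqref{Eq: Serre |i-j|=1} produces $\mathbf{k}$; the lower bound~\eqref{Eq: L(i^m):dimension} then forces its coefficient up to $2^{\ell_2}$. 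Now Lemma~\ref{Lem: irreduciblity} applies in full strength and gives irreducibility of $I_{12}$ (and symmetrically of $I_{21}$), after which the isomorphism follows from equality of characters and \cite[Theorem~3.17]{KL09} with no explicit map required.
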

\begin{proof} Let $\alpha := \alpha_{(a_1;\ell_1)} + \alpha_{(a_2;\ell_2)}$ and let
$\Sigma_{\ell_1 + \ell_2 }/ \Sigma_{\ell_1} \times \Sigma_{\ell_2}$
be the set of the minimal length coset representatives of
$\Sigma_{\ell_1} \times \Sigma_{\ell_2}$ in $\Sigma_{\ell_1 + \ell_2
}$.

(1) The condition $a_1+\ell_1 - 1 < a_2$ can be visualized as
follows. \vskip 1em
\begin{center}
\begin{texdraw}
\drawdim em \setunitscale 0.15  \linewd 0.6
\arrowheadtype t:F

\move(10 0) \lvec(10 -65)  
\linewd 0.3
\move(8 -58) \lvec(12 -58)  \htext(-3 -59){\scriptsize$a_1$}
   \htext(0 -51){\scriptsize$\vdots$}
\move(8 -38) \lvec(12 -38)  \htext(-20 -40){\scriptsize$a_1+\ell_1-1$}
\move(8 -28) \lvec(12 -28)  \htext(-3 -29){\scriptsize$a_2$}
   \htext(0 -21){\scriptsize$\vdots$}
\move(8 -8) \lvec(12 -8)  \htext(-20 -10){\scriptsize$a_2+\ell_2-1$}

\move(30 -38)
\lvec(30 -45)
\move(30 -52)
\lvec(30 -58)
\htext(28.1 -40){$\bullet$}
\htext(29 -52){$\vdots$}
\htext(28.1 -60){$\bullet$}

\move(50 -8)
\lvec(50 -15)
\move(50 -22)
\lvec(50 -28)
\htext(48.1 -10){$\bullet$}
\htext(48.9 -22){$\vdots$}
\htext(48.1 -30){$\bullet$}

\lpatt(0.5 1)
\move(30 -38)\clvec(30 -38)(35 -40)(35 -48)
\move(30 -58)\clvec(30 -58)(35 -55)(35 -48)
\htext(37 -50){$\ell_1$}

\move(50 -8)\clvec(50 -8)(55 -10)(55 -18)
\move(50 -28)\clvec(50 -28)(55 -25)(55 -18)
\htext(57 -20){$\ell_2$}

\end{texdraw}
\end{center}
\vskip 0.5em

By \cite[Proposition 2.18]{KL09}, we have
$$ \ch( \ind \ms_{(a_1;\ell_1)} \boxtimes \ms_{(a_2;\ell_2)}) =
\sum_{w \in \Sigma_{\ell_1 + \ell_2 }/ \Sigma_{\ell_1} \times
\Sigma_{\ell_2}} w\cdot (\mathbf{i}_{(a_1;\ell_1)} *
\mathbf{i}_{(a_2;\ell_2)}). $$ Note that each term in $\ch( \ind
\ms_{(a_1;\ell_1)} \boxtimes \ms_{(a_2;\ell_2)})$ has multiplicity
1. Let $Q$ be a quotient of $\ind \ms_{(a_1;\ell_1)} \boxtimes
\ms_{(a_2;\ell_2)}$. It follows from Lemma \ref{Lem: char} that
$\ch(Q)$ contains $\mathbf{i}_{(a_1;\ell_1)} *
\mathbf{i}_{(a_2;\ell_2)}$ as a term. By $\eqref{Eq: Serre
|i-j|>1}$, all terms in $\ch(\ind \ms_{(a_1;\ell_1)} \boxtimes
\ms_{(a_2;\ell_2)})$ occur in $\ch(Q)$. Therefore, $\ind
\ms_{(a_1;\ell_1)} \boxtimes \ms_{(a_2;\ell_2)}$ is irreducible. In
the same manner, one can prove that $ \ind \ms_{(a_2;\ell_2)}
\boxtimes  \ms_{(a_1;\ell_1)}$ is irreducible. Comparing the
characters $ \ch(\ind \ms_{(a_1;\ell_1)} \boxtimes
\ms_{(a_2;\ell_2)} ) $ and $\ch( \ind \ms_{(a_2;\ell_2)} \boxtimes
\ms_{(a_1;\ell_1)})$, by \cite[Theorem 3.17]{KL09}, we conclude
$$ \ind \ms_{(a_1;\ell_1)} \boxtimes \ms_{(a_2;\ell_2)} \cong \ind \ms_{(a_2;\ell_2)}
\boxtimes \ms_{(a_1;\ell_1)} .$$

(2) The conditions $a_2 \ge a_1$ and $a_1+\ell_1 \ge a_2 + \ell_2$
can be visualized as follows. \vskip 1em
\begin{center}
\begin{texdraw}
\drawdim em \setunitscale 0.15  \linewd 0.6
\arrowheadtype t:F

\move(10 0) \lvec(10 -65)  
\linewd 0.3
\move(8 -58) \lvec(12 -58)  \htext(-3 -59){\scriptsize$a_1$}
   \htext(0 -51){\scriptsize$\vdots$}
\move(8 -38) \lvec(12 -38)  \htext(-3 -40){\scriptsize$a_2$}
\move(8 -28) \lvec(12 -28)  \htext(-20 -29){\scriptsize$a_2+\ell_2-1$}
   \htext(0 -21){\scriptsize$\vdots$}
\move(8 -8) \lvec(12 -8)  \htext(-20 -10){\scriptsize$a_1+\ell_1-1$}

\move(30 -8)
\lvec(30 -20)
\move(30 -46)
\lvec(30 -58)
\htext(28.1 -10){$\bullet$}
\htext(29 -37){$\vdots$}
\htext(28.1 -60){$\bullet$}

\move(50 -28)
\lvec(50 -30)
\move(50 -36)
\lvec(50 -38)
\htext(48.1 -30){$\bullet$}
\htext(48.9 -35){$\vdots$}
\htext(48.1 -40){$\bullet$}

\lpatt(0.5 1)
\move(30 -8)\clvec(30 -8)(35 -10)(35 -32)
\move(30 -58)\clvec(30 -58)(35 -55)(35 -32)
\htext(37 -35){$\ell_1$}

\move(50 -28)\clvec(50 -28)(53 -30)(53 -33)
\move(50 -38)\clvec(50 -38)(53 -35)(53 -33)
\htext(55 -35){$\ell_2$}

\end{texdraw}
\end{center}
\vskip 0.5em

Let
$$ \mathbf{k} := (a_1, a_1+1, \ldots, a_2,a_2,a_2+1,a_2+1,\ldots, a_2+\ell_2-1, a_2+\ell_2-1,
\ldots, a_1+\ell_1-1  ) \in I^{\alpha}. $$ By Proposition \ref{Prop: shuffle} and the identity
$$ \ch( \ind \ms_{(a_1;\ell_1)} \boxtimes \ms_{(a_2;\ell_2)}) =
\sum_{w \in \Sigma_{\ell_1 + \ell_2 }/ \Sigma_{\ell_1} \times
\Sigma_{\ell_2}} w\cdot (\mathbf{i}_{(a_1;\ell_1)} *
\mathbf{i}_{(a_2;\ell_2)}), $$ it is easy to see that $\mathbf{k}$
occurs in $\ch( \ind \ms_{(a_1;\ell_1)} \boxtimes
\ms_{(a_2;\ell_2)})$ with multiplicity $ 2^{\ell_2} $. On the other
hand, by Lemma \ref{Lem: char}, for any quotient $Q$ of $\ind
\ms_{(a_1;\ell_1)} \boxtimes \ms_{(a_2;\ell_2)}$, $\ch(Q)$ contains
$\mathbf{i}_{(a_1;\ell_1)} * \mathbf{i}_{(a_2;\ell_2)}$ as a term.
By $\eqref{Eq: Serre |i-j|>1}$, $\ch(Q)$ must have the following
term
$$ (a_1, \ldots, a_2, a_2+1, a_2, a_2+2, \ldots, a_1+\ell_1-1, a_2+1, \ldots, a_2+\ell_2-1 ). $$
Hence  by $\eqref{Eq: Serre |i-j|=1}$ and Proposition \ref{Prop: shuffle}, $\ch(Q)$ contains
$$ (a_1, \ldots, a_2, a_2, a_2+1, a_2+2, \ldots, a_1+\ell_1-1, a_2+1, \ldots, a_2+\ell_2-1 ). $$
Continuing this process repeatedly, $\ch(Q)$ must contain the term
$\mathbf{k}$. By $\eqref{Eq: L(i^m):dimension}$, we deduce that
$\mathbf{k}$ occurs in $\ch(Q)$ with multiplicity $2^{\ell_2}$. In
the same manner, for any submodule $L$ of $\ind \ms_{(a_1;\ell_1)}
\boxtimes \ms_{(a_2;\ell_2)}$, $\ch(L)$ contains $\mathbf{k}$ with
multiplicity $2^{\ell_2}$. Therefore, by Lemma \ref{Lem:
irreduciblity}, we conclude that $\ind \ms_{(a_1;\ell_1)} \boxtimes
\ms_{(a_2;\ell_2)}$ is irreducible.

Similarly, one can prove that $\ind \ms_{(a_2;\ell_2)} \boxtimes
\ms_{(a_1;\ell_1)}$ is irreducible. Comparing the characters of $
\ind \ms_{(a_1;\ell_1)} \boxtimes \ms_{(a_2;\ell_2)}$ and that of
$\ind \ms_{(a_2;\ell_2)} \boxtimes \ms_{(a_1;\ell_1)}$, by
\cite[Theorem 3.17]{KL09}, we obtain
$$ \ind \ms_{(a_1;\ell_1)} \boxtimes \ms_{(a_2;\ell_2)} \cong \ind \ms_{(a_2;\ell_2)}
\boxtimes \ms_{(a_1;\ell_1)} .$$
\end{proof}

\begin{Lem} \label{Lem: duality}\

\begin{enumerate}

\item For $a\in \Z_{>0}$ and $\ell_1 \ge \ell_2 \ge \cdots \ge \ell_k > 0$ with $a+\ell_1-1 \le n$,
 we have
$$ \ind \ms_{(a;\ell_1)} \boxtimes \cdots \boxtimes \ms_{(a;\ell_k)} \cong  ( \ind \ms_{(a;\ell_k)} \boxtimes \cdots \boxtimes \ms_{(a;\ell_1)})^*. $$

\item Let $a_1,\ldots ,a_k \in \Z_{>0}$ and $\ell_1 \ge \ell_2 \ge \cdots \ge \ell_k > 0$. If
$$a_i+\ell_i -1 = a_j+\ell_j - 1 \le n \quad (i\ne j),  $$
then we have
$$ \ind \ms_{(a_1;\ell_1)} \boxtimes \cdots \boxtimes \ms_{(a_k;\ell_k)} \cong  ( \ind \ms_{(a_k;\ell_k)} \boxtimes \cdots \boxtimes \ms_{(a_1;\ell_1)})^*. $$

\end{enumerate}

\end{Lem}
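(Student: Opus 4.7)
The plan is to combine Theorem \ref{Thm: ind and coind} with the standard duality identity $(\ind N)^{*} \cong \coind N^{*}$, which follows from the anti-involution $\psi$. The first thing I note is that each one-dimensional module $\ms_{(a;\ell)}$ is concentrated in degree $0$, hence self-dual: $\ms_{(a;\ell)}^{*} \cong \ms_{(a;\ell)}$. This extends to $(\ms_{(a_1;\ell_1)} \boxtimes \cdots \boxtimes \ms_{(a_k;\ell_k)})^{*} \cong \ms_{(a_1;\ell_1)}^{*} \boxtimes \cdots \boxtimes \ms_{(a_k;\ell_k)}^{*} \cong \ms_{(a_1;\ell_1)} \boxtimes \cdots \boxtimes \ms_{(a_k;\ell_k)}$ as modules over the subalgebra $R(\beta_1)\otimes\cdots\otimes R(\beta_k)$.

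The key tool is the identity
\[
(\ind_{\beta_1,\ldots,\beta_k} N_1 \boxtimes \cdots \boxtimes N_k)^{*} \;\cong\; \coind_{\beta_1,\ldots,\beta_k} N_1^{*} \boxtimes \cdots \boxtimes N_k^{*}
\]
(up to a grading shift). This follows from the tensor-Hom adjunction together with the observation that $\psi$, being the identity on generators, restricts on the image of $R(\beta_1)\otimes\cdots\otimes R(\beta_k) \hookrightarrow R(\beta)$ to $\psi^{\otimes k}$ (using that generators in disjoint blocks commute in $R(\beta)$). Granting this identity, both (1) and (2) reduce to the same chain of homogeneous isomorphisms:
\begin{align*}
(\ind \ms_{(a_k;\ell_k)} \boxtimes \cdots \boxtimes \ms_{(a_1;\ell_1)})^{*}
&\cong \coind \ms_{(a_k;\ell_k)}^{*} \boxtimes \cdots \boxtimes \ms_{(a_1;\ell_1)}^{*} \\
&\cong \coind \ms_{(a_k;\ell_k)} \boxtimes \cdots \boxtimes \ms_{(a_1;\ell_1)} \\
&\cong \ind \ms_{(a_1;\ell_1)} \boxtimes \cdots \boxtimes \ms_{(a_k;\ell_k)},
\end{align*}
where the first step is the duality identity, the second uses self-duality of the one-dimensional modules, and the last step is Theorem \ref{Thm: ind and coind} (in its form $\coind(N_1 \boxtimes\cdots\boxtimes N_k) \cong \ind(N_k \boxtimes\cdots\boxtimes N_1)$, up to a grading shift). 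Notice that the numerical hypotheses on $(a,\ell_i)$ in (1) or on $(a_i,\ell_i)$ in (2) are never invoked; they are relevant only for later applications of this lemma.

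The main obstacle is verifying the duality identity $(\ind N)^{*} \cong \coind N^{*}$. Although folklore for KLR algebras, it requires the bookkeeping described above to confirm that the tensor-Hom adjunction intertwines the two $R(\beta)$-module structures correctly, one obtained via the right action on $R(\beta)$ twisted by $\psi$ and the other via the natural left $\coind$ structure, as well as tracking the precise grading shift (which is innocuous under the homogeneous-isomorphism convention adopted in the paper). Once this identity is in hand, the lemma follows from the short formal computation above.
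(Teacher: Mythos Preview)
Your argument is correct and takes a genuinely different route from the paper. The paper proceeds by an explicit construction: starting from the exact sequence
\[
0 \longrightarrow N \longrightarrow \res_{\beta_1,\ldots,\beta_k}\ind \ms_k \boxtimes \cdots \boxtimes \ms_1 \overset{\mathfrak q}{\longrightarrow} \ms_1 \boxtimes \cdots \boxtimes \ms_k \longrightarrow 0
\]
coming from Frobenius reciprocity and Theorem \ref{Thm: ind and coind}, it picks a preimage $\xi$ of the generator, defines a concrete linear functional $f$ on $\ind \ms_k\boxtimes\cdots\boxtimes\ms_1$ by $f(\xi)=1$ and $f(N)=0$, verifies that $\C f \cong \ms_1\boxtimes\cdots\boxtimes\ms_k$, identifies $\xi$ with $y\cdot v_k\otimes\cdots\otimes v_1$ for the longest coset representative $y$, and then shows by hand that the orbit of $f$ under $R(\beta)$ is a basis of the dual, whence the induced map $\mathcal F$ is an isomorphism by dimension count.

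By contrast, you bypass all of this with the abstract isomorphism $(\ind M)^{*}\cong\coind M^{*}$, the self-duality of the one-dimensional pieces, and one invocation of Theorem \ref{Thm: ind and coind}. This is cleaner and shows, as you note, that the numerical hypotheses on $(a_i,\ell_i)$ play no role in the duality statement itself (only in making the modules $\ms_{(a;\ell)}$ well-defined). The price you pay is having to justify the folklore identity $(\ind M)^{*}\cong\coind M^{*}$, which you correctly flag; the key point, that $\psi$ restricts along $\iota_{\beta_1,\ldots,\beta_k}$ to $\psi^{\otimes k}$ because the block images commute and $\psi$ fixes generators, is exactly what is needed to make the tensor--Hom adjunction respect the twisted $R(\beta)$-actions. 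The paper's approach, while longer, avoids relying on this identity and yields an explicit generator of the dual, which can be useful in its own right.
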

\begin{proof} We first prove the assertion (1). Let
$$\ms_i := \ms_{(a;\ell_i)},  \quad \beta_i := \alpha_{(a;\ell_i)}       
\qquad \text{ for $i=1,\ldots, k$, } $$
 and $\beta := \sum_{i=1}^k \beta_i$. Take a nonzero element $v_i \in \ms_i$ for each $i=1,\ldots, k$. From Lemma \ref{Lem: Frobenius reciprocity}
 and Theorem \ref{Thm: ind and coind}, we have an exact sequence
 $$ 0 \longrightarrow N \longrightarrow  \res_{\beta_1, \ldots, \beta_k}\ind \ms_k \boxtimes \cdots \boxtimes \ms_1
 \buildrel \mathfrak{q} \over \longrightarrow  \ms_1 \boxtimes \cdots \boxtimes \ms_k \longrightarrow 0$$
for some submodule $N$ of $\res_{\beta_1, \ldots, \beta_k}\ind \ms_k \boxtimes \cdots \boxtimes \ms_1$.
Take $\xi \in \ind \ms_k \boxtimes \cdots \boxtimes \ms_1$ such that
 $$\mathfrak{q}(\xi) = v_1\otimes v_2 \otimes \cdots \otimes v_k \in \ms_1 \boxtimes \cdots \boxtimes \ms_k. $$
Let $r_1\otimes \cdots \otimes r_k $ be an element of $ R(\beta_1)\otimes \cdots \otimes R(\beta_k)$ such that $\deg(r_1\otimes \cdots \otimes r_k)>0 $.
By $\eqref{Eq: def of ms}$,
the element $r_1\otimes \cdots \otimes r_k$ annihilates $ \ms_1 \boxtimes \cdots \boxtimes \ms_k$, which implies that
\begin{align} \label{Eq: stable under r}
(r_1\otimes \cdots \otimes r_k) \xi \in N.
\end{align}
We now define a $\C$-linear map $f \in (\ind \ms_k \boxtimes \cdots \boxtimes \ms_1)^*$ by
$$ f(\xi) = 1\quad \text{ and } \quad f(\zeta)=0\  \text{ for } \zeta \in N. $$
Note that $f$ does not depend on the choice of $\xi$, and by $\eqref{Eq: stable under r}$
\begin{align} \label{Eq: C psi}
\C f \cong  \ms_1 \boxtimes \cdots \boxtimes \ms_k.
\end{align}

On the other hand, by a direct computation, we may assume that
$$\xi = y \cdot v_k \otimes \cdots \otimes v_1, $$
where $y$ is the longest element in
$\Sigma_{\HT(\beta)}/\Sigma_{\HT(\beta_k)}\times \cdots \times \Sigma_{\HT(\beta_1)} $.
For any element $$w  \in \Sigma_{\HT(\beta)}/ \Sigma_{\HT(\beta_k)}\times \cdots \times \Sigma_{\HT(\beta_1)}, $$
there exists $w'\in \Sigma_{\HT(\beta)} $ such that $w' w = y$. Then, it follows from
$$ (\psi(w')f) (x) = f(w'x) = \left\{
                                       \begin{array}{ll}
                                         1 & \hbox{ if } x = w \cdot v_k \otimes \cdots \otimes v_1, \\
                                         0 & \hbox{otherwise},
                                       \end{array}
                                     \right.
 $$
that $\{ \psi(w')f \ |\ w \in  \Sigma_{\HT(\beta)}/ \Sigma_{\HT(\beta_k)}\times \cdots \times \Sigma_{\HT(\beta_1)} \}$ is a basis for $(\ind \ms_k \boxtimes \cdots \boxtimes \ms_1)^* $.
Hence the $R(\beta)$-module $(\ind \ms_k \boxtimes \cdots \boxtimes \ms_1)^*$ is generated by $f$.

Define the map
$$ F: \ms_1 \boxtimes \cdots \boxtimes \ms_k  \longrightarrow
 \res_{\beta_1,\ldots,\beta_k}(\ind \ms_k \boxtimes \cdots \boxtimes \ms_1)^* $$
by mapping $v_1\otimes \cdots \otimes v_k$ to $ f $. It follows from $\eqref{Eq: C psi}$ that the map $F$ is an
$R(\beta_1) \otimes \cdots \otimes R(\beta_k)$-homomorphism. By Lemma \ref{Lem: Frobenius reciprocity},
we have the $R(\beta)$-homomorphism
$$ \mathcal{F}: \ind \ms_1 \boxtimes \cdots \boxtimes \ms_k  \longrightarrow
 (\ind \ms_k \boxtimes \cdots \boxtimes \ms_1)^* $$ sending
$r \cdot v_1\otimes \cdots \otimes v_k$ to $ r \cdot f$. Since
$$\dim ( \ind \ms_1 \boxtimes \cdots \boxtimes \ms_k)  = \dim (\ind \ms_k \boxtimes \cdots \boxtimes \ms_1)^*$$
and $(\ind \ms_k \boxtimes \cdots \boxtimes \ms_1)^*$ is generated
by $f$, the map $\mathcal{F}$ is an isomorphism, which proves the
assertion (1).

The assertion (2) can be proved in a similar manner. \end{proof}

\begin{Lem} \label{Lem: irr module coming from YD} \
\begin{enumerate}
\item For $a\in \Z_{>0}$ and $\ell_1 \ge \ell_2 \ge \cdots \ge \ell_k > 0$ with $a+\ell_1-1 \le n$,
$$ \ind \ms_{(a;\ell_1)} \boxtimes \ms_{(a;\ell_2)} \boxtimes \cdots \boxtimes \ms_{(a;\ell_k)} $$
is irreducible.
\item Let $a_1,\ldots ,a_k \in \Z_{>0}$ and $\ell_1 \ge \ell_2 \ge \cdots \ge \ell_k > 0$. Suppose that
$$a_i+\ell_i -1 = a_j+\ell_j - 1 \le n \quad (i\ne j).  $$ Set $b := a_1+\ell_1 -1$ and
$$ M := \ind \ms_{(a_1;\ell_1)} \boxtimes \ms_{(a_2;\ell_2)} \boxtimes \cdots \boxtimes \ms_{(a_k;\ell_k)}.$$
Then we have
\begin{enumerate}
\item $ M $ is irreducible,
\item $ \varepsilon_b (M) =k$,
\item $\e_b^k(M ) $ is isomorphic to $ \ind \ms_{(a_1;\ell_1-1)} \boxtimes \ms_{(a_2;\ell_2-1)} \boxtimes \cdots \boxtimes \ms_{(a_k;\ell_k-1)}.$
\end{enumerate}

\end{enumerate}
\end{Lem}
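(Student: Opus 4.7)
The plan is to handle the irreducibility in (1) and (2)(a) by parallel character arguments imitating the proof of Lemma \ref{Lem: linking}, and then deduce (2)(b) and (2)(c) from the character structure together with Lemma \ref{Lem: e_i and tilde e_i} and the Mackey-type exact sequence of Lemma \ref{Lem: exact}.

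For each of (1) and (2)(a), let $M$ denote the induced module in question and set $\alpha = \sum_i \alpha_{(a_i;\ell_i)}$ (with all $a_i = a$ in part (1)). Fix the distinguished weight
\[
\mathbf{i}_0 := \mathbf{i}_{(a_k;\ell_k)} \ast \mathbf{i}_{(a_{k-1};\ell_{k-1})} \ast \cdots \ast \mathbf{i}_{(a_1;\ell_1)} \in I^\alpha.
\]
First I would compute the multiplicity $m$ of $\mathbf{i}_0$ in $\ch M$ explicitly using Proposition \ref{Prop: shuffle} and the shuffle formula from \cite{KL09}. For any quotient $Q$ of $M$, Lemma \ref{Lem: char}(1) ensures that the concatenation $\ch(\ms_{(a_1;\ell_1)}) \ast \cdots \ast \ch(\ms_{(a_k;\ell_k)})$ appears in $\ch Q$; then, iterating the Serre-type identities $\eqref{Eq: Serre |i-j|>1}$ and $\eqref{Eq: Serre |i-j|=1}$ together with the dimension lower bound $\eqref{Eq: L(i^m):dimension}$, I would propagate to show that $\mathbf{i}_0$ itself occurs in $\ch Q$ with multiplicity at least $m$. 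Dually, Lemma \ref{Lem: char}(2) applied to any submodule $L$ already places $\mathbf{i}_0$ in $\ch L$ with multiplicity at least $m$. Lemma \ref{Lem: irreduciblity} then forces $M$ to be irreducible.

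For (2)(b), each factor $\ms_{(a_i;\ell_i)}$ contributes exactly one entry equal to $b$, at its final position, since $a_i + \ell_i - 1 = b$ is the largest index in $(a_i, a_i+1, \ldots, b)$. Hence every weight in $\ch M$ contains exactly $k$ copies of $b$, giving $\varepsilon_b(M) \le k$; conversely, $\mathbf{i}_0$ ends with $k$ consecutive $b$'s which survive as $-$ signs in the $b$-signature, giving $\varepsilon_b(M) \ge k$. For (2)(c), combining (a), (b) and Lemma \ref{Lem: e_i and tilde e_i}(2) yields
\[
[e_b^k M] = q^{-k(k-1)/2}\,[k]_q!\,[\e_b^k M].
\]
I would compute $[e_b^k M]$ directly by iterating Lemma \ref{Lem: exact}: each application of $e_b$ strips the single $b$ from one of the tensor factors, so after $k$ steps the only surviving contribution is $M' := \ind \ms_{(a_1;\ell_1-1)} \boxtimes \cdots \boxtimes \ms_{(a_k;\ell_k-1)}$, up to a grading shift. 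Matching the two expressions for $[e_b^k M]$ and invoking the irreducibility of $\e_b^k M$ gives $\e_b^k M \cong M'$.

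The main obstacle is the combinatorial bookkeeping in the character propagation for (1) and (2)(a): many intermediate weights arise when applying the Serre relations, and one must verify that the multiplicity inequality survives every propagation step until $\mathbf{i}_0$ is reached. A secondary technical point is the iterated Mackey filtration in (2)(c): the successive subquotients must be identified as induced modules of the expected form, with all terms other than the one matching $M'$ vanishing because they would carry fewer copies of $b$ than required.
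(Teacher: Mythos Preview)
Your approach differs substantially from the paper's. Rather than a direct character propagation for general $k$, the paper proves (2) by induction on $\ell_1$, leaning on the self-duality result of Lemma~\ref{Lem: duality}. The key steps are: (i) using Lemma~\ref{Lem: linking}(2), the surjections $\ind \ms_{(a_i;\ell_i-1)}\boxtimes\ms_{(b;1)}\twoheadrightarrow\ms_{(a_i;\ell_i)}$ combine to give $\ind\bigl(N\boxtimes L(b^k)\bigr)\twoheadrightarrow M$, where $N=\ind\ms_{(a_1;\ell_1-1)}\boxtimes\cdots\boxtimes\ms_{(a_k;\ell_k-1)}$ is irreducible by the inductive hypothesis; (ii) \cite[Lemma~3.7]{KL09} then gives $\varepsilon_b(\hd M)=k$, $\e_b^k(\hd M)\simeq N$, and every other composition factor of $M$ has $\varepsilon_b<k$; (iii) Lemma~\ref{Lem: duality} yields $M\simeq M^*$, so $\hd M\simeq(\hd M)^*\hookrightarrow M$, forcing $\varepsilon_b(\soc M)\ge k$ and hence $M=\hd M$. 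This delivers (a), (b), (c) simultaneously and bypasses entirely the multi-segment character bookkeeping you flag as the main obstacle; part~(1) is handled by the parallel argument with the operators $\e_i^{\vee}$ of \cite[(2.19)]{LV09}.

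Your proposal, as written, has two concrete gaps. First, the word $\mathbf{i}_0=\mathbf{i}_{(a_k;\ell_k)}\ast\cdots\ast\mathbf{i}_{(a_1;\ell_1)}$ does \emph{not} end in $k$ consecutive $b$'s: each block $\mathbf{i}_{(a_j;\ell_j)}=(a_j,\ldots,b)$ contributes a single $b$ at its right end, so $\mathbf{i}_0$ has only one trailing $b$, and your argument for $\varepsilon_b(M)\ge k$ fails for this word. Second, Lemma~\ref{Lem: char}(2) only guarantees that $\mathbf{i}_0$ appears in $\ch L$ with multiplicity at least~$1$, not~$m$; to reach $m$ you would still need to propagate via the Serre relations on the submodule side as well (as the paper does for $k=2$ in Lemma~\ref{Lem: linking}(2)). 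A character argument along your lines could plausibly be pushed through with a better choice of target word---one whose multiplicity is pinned down by~\eqref{Eq: L(i^m):dimension}---but the induction-plus-duality route sidesteps this combinatorics and gets (b) and (c) for free from \cite[Lemma~3.7]{KL09}, rather than via a separate iterated Mackey computation.
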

\begin{proof}
We first prove (2). We will use induction on $\ell_1$. If $\ell_1 =
1$, then our assertion follows from $\eqref{Eq: L(i^m)}$
immediately. Assume that $\ell_1 > 1$. Let
$$N := \ind \ms_{(a_1;\ell_1-1)} \boxtimes \ms_{(a_2;\ell_2-1)} \boxtimes \cdots \boxtimes \ms_{(a_k;\ell_k-1)}.$$
By the induction hypothesis, $N$ is irreducible.
By Lemma \ref{Lem: Frobenius reciprocity}, it follows from
$$ \res_{\alpha_{(a_i;\ell_i)} - \alpha_b, \alpha_b} \ms_{(a_i;\ell_i)}  \cong  \ms_{(a_i;\ell_i - 1)} \boxtimes \ms_{(b;1)} $$
that we get an exact sequence
$$ \ind \ms_{(a_i;\ell_i - 1)} \boxtimes \ms_{(b;1)} \longrightarrow \ms_{(a_i;\ell_i)} \longrightarrow 0 .$$
Since $  L(b^k) \cong \ind \underbrace{ \ms_{(b;1)} \boxtimes \cdots \boxtimes \ms_{(b;1)} }_k ,$
by transitivity of induction and Lemma \ref{Lem: linking} (2),
we have
$$ \ind (N \boxtimes L(b^k)) \cong \ind (N \boxtimes \underbrace{ \ms_{(b;1)} \boxtimes \cdots \boxtimes \ms_{(b;1)}}_k )   \longrightarrow M
 \longrightarrow 0.$$
Hence, from \cite[Lemma 3.7]{KL09}, we conclude that
$$ \varepsilon_b (\hd M) = k, \qquad  \  N  \simeq \e_b^k ( \hd M ),$$
and all the other composition factors $L$ of $M$ have
$\varepsilon_b(L) < k$. On the other hand, from Lemma \ref{Lem:
duality} and Lemma \ref{Lem: linking}, we have
$$ 0 \longrightarrow \hd M \simeq (\hd M)^* \longrightarrow M^* \simeq M,  $$
which yields $ \varepsilon_b( \soc M ) \ge k $. Therefore, $M$ is
irreducible.

Similarly, using the operator $\e_i^{\vee}$ in \cite[(2.19)]{LV09},
one can prove the assertion (1).
\end{proof}

Combining Lemma \ref{Lem: irr module coming from YD} with \eqref{Eq:
1-dim rep from YD} and Lemma \ref{Lem: linking}, we obtain the
following proposition.

\begin{Prop} \label{Prop: irr module coming from YD}
Let $\mu = (\mu_1 \ge \mu_2 \ge \ldots \ge \mu_r>0)$ be a Young
diagram, and $k \in \Z_{>0}$. Assume that $ k + \mu_1  - 1 \le n .$
Then
\begin{enumerate}
\item $ \ind \ms_{\mu}[k]$ is irreducible,
\item $ \ind \ms_{\mu}[k] $ is isomorphic to $\ind \tms_{\mu}[k] .$
\end{enumerate}

\end{Prop}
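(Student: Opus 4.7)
Part (1) will be an immediate consequence of Lemma \ref{Lem: irr module coming from YD} (1): taking $a := k$ and $\ell_i := \mu_i$ there, the hypotheses $\mu_1 \ge \cdots \ge \mu_r > 0$ and $k + \mu_1 - 1 \le n$ are precisely what is required, so $\ind \ms_\mu[k] = \ind \ms_{(k;\mu_1)} \boxtimes \cdots \boxtimes \ms_{(k;\mu_r)}$ is irreducible with no further work.

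For part (2), the plan is to reverse the order of the factors in $\ms_\mu[k]$ by repeated adjacent transpositions supplied by Lemma \ref{Lem: linking} (2), arguing by induction on $r$. The case $r = 1$ is tautological. For the inductive step I would first apply the inductive hypothesis to the truncated Young diagram $\mu' := (\mu_2 \ge \mu_3 \ge \cdots \ge \mu_r)$, which gives $\ind \ms_{\mu'}[k] \cong \ind \tms_{\mu'}[k]$. Combining this with transitivity of induction yields
\[ \ind \ms_\mu[k] \cong \ind \ms_{(k;\mu_1)} \boxtimes \ms_{(k;\mu_r)} \boxtimes \ms_{(k;\mu_{r-1})} \boxtimes \cdots \boxtimes \ms_{(k;\mu_2)}. \]
I would then bubble the initial factor $\ms_{(k;\mu_1)}$ to the rightmost slot by applying $r-1$ adjacent swaps of the form $\ind \ms_{(k;\mu_1)} \boxtimes \ms_{(k;\mu_j)} \cong \ind \ms_{(k;\mu_j)} \boxtimes \ms_{(k;\mu_1)}$ for $j = r, r-1, \ldots, 2$ in succession. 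Each such swap is a direct instance of Lemma \ref{Lem: linking} (2) applied with $a_1 = a_2 = k$ and $\ell_1 = \mu_1 \ge \mu_j = \ell_2$, so the hypotheses $a_2 \ge a_1$ and $a_1 + \ell_1 \ge a_2 + \ell_2$ are automatic. The end result is precisely $\ind \tms_\mu[k]$, as desired.

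The only piece of bookkeeping is to check that each adjacent isomorphism $\ind A \boxtimes B \cong \ind B \boxtimes A$ propagates to the same swap inside a longer induced chain; but this is immediate from transitivity of induction, written as $\ind(\ind(A \boxtimes B) \boxtimes \cdots) \cong \ind(\ind(B \boxtimes A) \boxtimes \cdots)$. Accordingly I anticipate no real obstacle, only a short self-contained argument that essentially reads off both assertions from the lemmas already in hand.
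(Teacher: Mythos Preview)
Your proposal is correct and matches the paper's approach: the paper simply says the proposition follows by combining Lemma \ref{Lem: irr module coming from YD} with the definitions \eqref{Eq: 1-dim rep from YD} and Lemma \ref{Lem: linking}, and your argument is exactly the natural way to unpack that one-line reference---part (1) is Lemma \ref{Lem: irr module coming from YD} (1) verbatim, and part (2) is the bubble-sort reordering via Lemma \ref{Lem: linking} (2) and transitivity of induction.
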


Let $\lambda = (\lambda_1 \ge \lambda_2 \ge  \cdots \ge\lambda_s > 0
)$ be a Young diagram and let $\Psi_{\lambda} : \mathbf{B}(\lambda)
\longrightarrow \YD^s $ be the injective map defined by \eqref{Eq:
injection Psi}. For a semistandard tableau $T$ of shape $\lambda$,
define
\begin{equation} \label{Eq:ms}
\ms_T :=   \ms_{\mu^{(s)}}[s] \boxtimes \ms_{\mu^{(s-1)}}[s-1]
\boxtimes \cdots \boxtimes
 \ms_{\mu^{(1)}}[1],
\end{equation}
where $\Psi_{\lambda}(T) = (\mu^{(1)}, \ldots, \mu^{(s)})$.

Let
$\mu=(\mu_1\ge \ldots \ge \mu_r>0)$ be a Young diagram, and
$$^t \mu = (c_1 \ge \ldots \ge c_t>0).$$
For $k\in \Z_{>0}$, define
$$ \mathbf{i}(\mu; k) := (\underbrace{k,\ldots, k}_{c_1},\underbrace{k+1, \ldots,k+1}_{c_2}, \ldots,
\underbrace{k+\mu_1-1, \ldots, k+\mu_1-1}_{c_t} ) \in
I^{\alpha_{\mu}[k]}. $$  If $ k + \mu_1  - 1 \le n $, then it
follows from Proposition \ref{Prop: shuffle} and Proposition \ref{Prop: irr
module coming from YD} that
\begin{align} \label{Eq: multiplicity YD}
\text{$\mathbf{i}(\mu; k)$ occurs in $\ch (\ind \ms_{\mu}[k])$ with multiplicity ${^t}\mu !:= c_1! c_2! \cdots c_t! $. }
\end{align}
By Proposition \ref{Prop: shuffle} and $\eqref{Eq: multiplicity YD}$, we
deduce
\begin{align} \label{Eq: multiplicity SST}
\text{$\mathbf{i}(\mu^{(s)}; s)* \cdots *\mathbf{i}(\mu^{(1)}; 1)$ occurs in $\ch (\ind \ms_T)$ with multiplicity ${^t}\mu^{(s)}!\cdots {^t}\mu^{(1)}! $. }
\end{align}

Now we will state and prove one of our main results.

\begin{Thm} \label{Thm: hd indT is irr}
Let $T$ be a semistandard tableau of shape $\lambda$. Then $\hd \ind \ms_T$ is irreducible.
\end{Thm}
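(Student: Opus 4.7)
The plan is to apply Lemma \ref{Lem: irreduciblity}(2) to $\ind \ms_T$ with the distinguished weight $\mathbf{i}_T := \mathbf{i}(\mu^{(s)}; s) \ast \mathbf{i}(\mu^{(s-1)}; s-1) \ast \cdots \ast \mathbf{i}(\mu^{(1)}; 1)$ and the integer $m := {}^t\mu^{(s)}! \cdot {}^t\mu^{(s-1)}! \cdots {}^t\mu^{(1)}!$. By \eqref{Eq: multiplicity SST}, the weight $\mathbf{i}_T$ already occurs in $\ch(\ind \ms_T)$ with multiplicity exactly $m$. Consequently it is enough to prove that, for every graded quotient $Q$ of $\ind \ms_T$, the coefficient of $\mathbf{i}_T$ in $\ch Q$ is at least $m$; equality then follows automatically, and Lemma \ref{Lem: irreduciblity}(2) yields that $\hd \ind \ms_T$ is irreducible.

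The lower bound will be obtained in two substeps. First, Lemma \ref{Lem: char}(1) forces the character of any quotient $Q$ to contain the single concatenated sequence $\ch(\ms_{\mu^{(s)}}[s]) \ast \cdots \ast \ch(\ms_{\mu^{(1)}}[1])$, namely the weight obtained by reading each Young diagram $\mu^{(k)}$ \emph{row by row} (with shift $k-1$). Starting from this term, I will show that $\mathbf{i}_T$ itself belongs to $\ch Q$ by repeated application of the Serre-type identities \eqref{Eq: Serre |i-j|>1} and \eqref{Eq: Serre |i-j|=1} in exactly the spirit of the second half of the proof of Lemma \ref{Lem: linking}(2): whenever two adjacent labels differ by more than one they may be freely swapped, and whenever they differ by one the three-term identity together with Proposition \ref{Prop: shuffle} forces one of the remaining interleaved terms to occur. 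The semistandardness of $T$ — equivalently the inequality $\mu^{(k+1)}_{\lambda_{k+1} - j + 1} \ge \mu^{(k)}_{\lambda_k - j + 1}$ on the column-reversed parts — ensures that the desired interleavings can always be produced.

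Second, once $\mathbf{i}_T$ appears as a term of $\ch Q$, observe that $\mathbf{i}_T$ decomposes as a concatenation of maximal consecutive runs of equal letters of lengths $c^{(k)}_j$, where $({}^t\mu^{(k)})_j = c^{(k)}_j$ are the column heights of $\mu^{(k)}$. Iterated application of \eqref{Eq: L(i^m):dimension} — realised concretely by embedding the product of nilHecke subalgebras attached to these runs into $R$ — forces $\dim(1_{\mathbf{i}_T} Q) \ge \prod_{k, j} c^{(k)}_j ! = m$, giving the required bound. The main technical obstacle is the first substep: the combinatorial rearrangement from the row-by-row reading to $\mathbf{i}_T$ by Serre relations, especially at the interfaces between consecutive blocks $\mu^{(k+1)}$ and $\mu^{(k)}$ whose label ranges $[k+1,\, k+\mu^{(k+1)}_1]$ and $[k,\, k+\mu^{(k)}_1 - 1]$ typically overlap. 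A careful induction on the number of elementary transpositions needed to reach $\mathbf{i}_T$, tracking via Proposition \ref{Prop: shuffle} which shuffled term is forced at each application of \eqref{Eq: Serre |i-j|=1}, should complete the proof.
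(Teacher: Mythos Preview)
Your overall strategy --- apply Lemma \ref{Lem: irreduciblity}(2) to $\ind \ms_T$ with the weight $\mathbf{i}_T=\mathbf{i}(\mu^{(s)};s)\ast\cdots\ast\mathbf{i}(\mu^{(1)};1)$ and $m={}^t\mu^{(s)}!\cdots{}^t\mu^{(1)}!$ --- is exactly the paper's.  The difference lies in how the lower bound $\dim(1_{\mathbf{i}_T}Q)\ge m$ is obtained for an arbitrary quotient $Q$.

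The paper's argument is one line and bypasses both of your substeps.  By transitivity of induction,
\[
\ind \ms_T \;\cong\; \ind\bigl((\ind \ms_{\mu^{(s)}}[s])\boxtimes\cdots\boxtimes(\ind \ms_{\mu^{(1)}}[1])\bigr),
\]
and each factor $\ind\ms_{\mu^{(k)}}[k]$ is already known to be \emph{irreducible} by Proposition \ref{Prop: irr module coming from YD}.  Hence the outer product is irreducible, and Frobenius reciprocity (Lemma \ref{Lem: Frobenius reciprocity}) gives, for any quotient $Q$, an \emph{injection}
\[
(\ind \ms_{\mu^{(s)}}[s])\boxtimes\cdots\boxtimes(\ind \ms_{\mu^{(1)}}[1])\;\hookrightarrow\;\res_{\alpha_{\mu^{(s)}}[s],\ldots,\alpha_{\mu^{(1)}}[1]}Q.
\]
Since $\mathbf{i}_T$ occurs in the character of the left-hand side with multiplicity $m$ by \eqref{Eq: multiplicity YD}, it occurs in $\ch Q$ with multiplicity at least $m$.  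Done.

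Your route instead tries to reproduce, inside an arbitrary quotient $Q$, the Serre-relation combinatorics that were already packaged into Proposition \ref{Prop: irr module coming from YD} (via Lemmas \ref{Lem: linking} and \ref{Lem: irr module coming from YD}).  This is not absurd, but it duplicates work and leaves a genuine gap: the ``careful induction \ldots\ should complete the proof'' at the block interfaces is exactly the hard part, and with many overlapping segments the use of Proposition \ref{Prop: shuffle} to decide which term of \eqref{Eq: Serre |i-j|=1} is forced is no longer as transparent as in the two-segment situation of Lemma \ref{Lem: linking}(2).  The paper's Frobenius-reciprocity trick makes the interface analysis disappear entirely: the block-by-block irreducibility already encodes all of the needed Serre rearrangements, and gluing is free.
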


\begin{proof}
Let $\Psi_{\lambda}(T) = (\mu^{(1)}, \ldots, \mu^{(s)})$ and let $Q$
be a quotient of $\ind T$. It follows from Proposition \ref{Prop:
irr module coming from YD} that $ (\ind \ms_{\mu^{(s)}}[s])
\boxtimes (\ind \ms_{\mu^{(s-1)}}[s-1]) \boxtimes \cdots \boxtimes
(\ind \ms_{\mu^{(1)}}[1])$ is irreducible. Then, by Lemma \ref{Lem:
Frobenius reciprocity}, we have the following exact sequence
$$0 \longrightarrow (\ind \ms_{\mu^{(s)}}[s]) \boxtimes (\ind \ms_{\mu^{(s-1)}}[s-1]) \boxtimes \cdots \boxtimes
( \ind \ms_{\mu^{(1)}}[1])  \longrightarrow \res_{\alpha_{\mu^{(s)}}[s], \ldots, \alpha_{\mu^{(1)}}[1]} Q,$$
which implies that, by $\eqref{Eq: multiplicity YD}$ and $\eqref{Eq: multiplicity SST}$,
$$
 \text{$\mathbf{i}(\mu^{(s)}; s)* \cdots *\mathbf{i}(\mu^{(1)}; 1)$ occurs in $\ch Q$ with multiplicity ${^t}\mu^{(s)}!\cdots {^t}\mu^{(1)}! $.}
$$
Therefore, our assertion follows from Lemma \ref{Lem:
irreduciblity}.
\end{proof}

Thus we obtain a map $ \mathbf{B}(\lambda) \rightarrow \mathfrak{B}(\infty) \otimes \mathbf{T}^{\lambda} \otimes \mathbf{C}$ given by
$$ T  \mapsto \hd \ind \ms_T \otimes t_\lambda \otimes c \quad (T \in \mathbf{B}(\lambda)).$$
We will show that this map is the strict crystal embedding which maps the maximal vector $T_\lambda$ to $\mathbf{1} \otimes t_\lambda \otimes c$.
Here, $\mathbf{1}$ is the trivial $R(0)$-module.

For a Young diagram $\mu = (\mu_1\ge \mu_2\ge \cdots\ge \mu_r > 0)$, let
$$ \mu^{+} := (\mu_1 - 1\ge \mu_2 - 1 \ge \cdots\ge \mu_r - 1 \ge 0). $$
For $k = 1 ,\ldots, n$ with $ k - \mu_1 + 1 \ge 1$, we define
\begin{align*}
\ims_\mu[k] &:= \ms_{(k-\mu_1 + 1; \mu_1)} \boxtimes \ms_{(k-\mu_2 + 1; \mu_2)}
\boxtimes \cdots \boxtimes \ms_{(k-\mu_r + 1; \mu_r)}, \\
\tims_\mu[k] &:=  \ms_{(k-\mu_r + 1; \mu_r)} \boxtimes \ms_{(k-\mu_{r-1} + 1; \mu_{r-1})}
\boxtimes \cdots \boxtimes \ms_{(k-\mu_1 + 1; \mu_1)}.
\end{align*}
Pictorially, the modules $ \ims_{\mu}[k]$ and $\tims_{\mu}[k]$ may
be visualized as follows: \vskip 1em
\begin{center}
\begin{texdraw}
\drawdim em \setunitscale 0.15  \linewd 0.6
\arrowheadtype t:F

\move(10 0) \lvec(10 -55)
\linewd 0.3
\move(8 -48) \lvec(12 -48)  \htext(-18 -49){\scriptsize$k-\mu_1+1$}
\move(8 -38) \lvec(12 -38)  \htext(-18 -40){\scriptsize$k-\mu_1+2$}
 \htext(0 -25){\scriptsize$\vdots$}
\move(8 -8) \lvec(12 -8)  \htext(3 -10){\scriptsize$k$}

\move(30 -8)
\lvec(30 -15)
\move(30 -32)
\lvec(30 -48)
\htext(28.3 -10){$\bullet$}
\htext(28.9 -26){$\vdots$}
\htext(28.3 -40){$\bullet$}
\htext(28.3 -50){$\bullet$}


\move(50 -8)
\lvec(50 -15)
\move(50 -27)
\lvec(50 -33)
\htext(48.3 -10){$\bullet$}
\htext(49.1 -24){$\vdots$}
\htext(48.3 -35){$\bullet$}

\htext(68 -20){\large$\cdots$}


\move(83 -8)
\lvec(83 -12)
\move(83 -20)
\lvec(83 -23)
\htext(81.5 -10){$\bullet$}
\htext(82.3 -20){$\vdots$}
\htext(81.5 -25){$\bullet$}

\htext(95 -50){,}

\linewd 0.2
\move(30 -55)\clvec(30 -55)(35 -58)(55 -58)
\move(83 -55)\clvec(83 -55)(78 -58)(55 -58)
\htext(51 -65){$\uparrow$}
\htext(45 -73){$\ims_{\mu}[k]$}
\linewd 0.3


\linewd 0.6
\move(140 0) \lvec(140 -55)
\linewd 0.3
\move(138 -48) \lvec(142 -48)  \htext(112 -49){\scriptsize$k-\mu_1+1$}
\move(138 -38) \lvec(142 -38)  \htext(112 -40){\scriptsize$k-\mu_1+2$}
 \htext(130 -25){\scriptsize$\vdots$}
\move(138 -8) \lvec(142 -8)  \htext(133 -10){\scriptsize$k$}


\move(153 -8)
\lvec(153 -12)
\move(153 -20)
\lvec(153 -23)
\htext(151.3 -10){$\bullet$}
\htext(152.1 -20){$\vdots$}
\htext(151.3 -25){$\bullet$}

\htext(169 -20){\large$\cdots$}


\move(186 -8)
\lvec(186 -15)
\move(186 -27)
\lvec(186 -33)
\htext(184.3 -10){$\bullet$}
\htext(185.1 -24){$\vdots$}
\htext(184.3 -35){$\bullet$}

\move(206 -8)
\lvec(206 -15)
\move(206 -32)
\lvec(206 -48)
\htext(204.3 -10){$\bullet$}
\htext(204.9 -26){$\vdots$}
\htext(204.3 -40){$\bullet$}
\htext(204.3 -50){$\bullet$}

\linewd 0.2
\move(153 -55)\clvec(153 -55)(158 -58)(180 -58)
\move(206 -55)\clvec(206 -55)(201 -58)(180 -58)
\htext(178 -65){$\uparrow$}
\htext(170 -73){$\tims_{\mu}[k]$}
\linewd 0.3


\lpatt(0.5 1)
\move(30 -8)\clvec(30 -8)(35 -10)(35 -25)
\move(30 -48)\clvec(30 -48)(35 -45)(35 -25)
\htext(37 -28){$\mu_1$}

\move(50 -8)\clvec(50 -8)(55 -10)(55 -20)
\move(50 -33)\clvec(50 -33)(55 -30)(55 -20)
\htext(57 -23){$\mu_2$}

\move(83 -8)\clvec(83 -8)(86 -10)(86 -15)
\move(83 -23)\clvec(83 -23)(86 -20)(86 -15)
\htext(88 -18){$\mu_r$}


\move(153 -8)\clvec(153 -8)(156 -10)(156 -15)
\move(153 -23)\clvec(153 -23)(156 -20)(156 -15)
\htext(158 -18){$\mu_r$}

\move(186 -8)\clvec(186 -8)(191 -10)(191 -20)
\move(186 -33)\clvec(186 -33)(191 -30)(191 -20)
\htext(193 -23){$\mu_2$}

\move(206 -8)\clvec(206 -8)(211 -10)(211 -25)
\move(206 -48)\clvec(206 -48)(211 -45)(211 -25)
\htext(213 -28){$\mu_1$}

\end{texdraw}
\end{center}
\vskip 0.5em

By Lemma \ref{Lem: irr module coming from YD} and Lemma \ref{Lem: linking},  we have the following lemma.
\begin{Lem} \label{Lem: inv segment}
 Let $\mu = (\mu_1\ge \cdots\ge \mu_r > 0)$ and $k = 1 ,\ldots, n$ with $ k - \mu_1 + 1 \ge 1$.
\begin{enumerate}
\item $\ind \ims_\mu[k]$ is irreducible.
\item $\ind \ims_\mu[k]$ is isomorphic to $\ind \tims_\mu[k]$.
\item $\varepsilon_k(\ind \ims_\mu[k]) = r$.
\item
$\e_k^r(\ind \ims_\mu[k])  \simeq \ind \ims_{\mu^{+}}[k-1]. $
\end{enumerate}
\end{Lem}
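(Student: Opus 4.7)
The plan is to observe that $\ims_\mu[k]$ fits exactly into the framework of Lemma \ref{Lem: irr module coming from YD} (2) after the reparametrization $a_i := k - \mu_i + 1$ and $\ell_i := \mu_i$ for $i = 1, \ldots, r$. With this substitution one verifies directly that $a_i + \ell_i - 1 = k$ for every $i$, so all segments share the common endpoint $b = k$, while $\ell_1 \geq \ell_2 \geq \cdots \geq \ell_r > 0$ because $\mu_1 \geq \mu_2 \geq \cdots \geq \mu_r > 0$. The hypotheses of Lemma \ref{Lem: irr module coming from YD} (2) are therefore satisfied. Assertion (1) is its part (2a), assertion (3) is its part (2b), and assertion (4) follows from its part (2c) once we identify
$$ \ind \ms_{(k-\mu_1+1;\mu_1-1)} \boxtimes \cdots \boxtimes \ms_{(k-\mu_r+1;\mu_r-1)} \;\cong\; \ind \ims_{\mu^+}[k-1], $$
which is just the definition of $\ims_{\mu^+}[k-1]$: indeed $(k-1) - (\mu_i - 1) + 1 = k - \mu_i + 1$, while any segment of length $0$ produced when $\mu_i = 1$ is interpreted via the convention $\ms_{(a;0)} := \C$ and therefore contributes trivially to the induction.

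For assertion (2), the plan is to reduce the global reversal of the $r$ tensor factors of $\ims_\mu[k]$ to a sequence of adjacent transpositions. For any pair $i < j$ one has $a_j \geq a_i$ (because $\mu_j \leq \mu_i$) and $a_i + \ell_i = k + 1 = a_j + \ell_j$, so Lemma \ref{Lem: linking} (2) applies and gives
$$ \ind \ms_{(a_i;\ell_i)} \boxtimes \ms_{(a_j;\ell_j)} \;\cong\; \ind \ms_{(a_j;\ell_j)} \boxtimes \ms_{(a_i;\ell_i)}. $$
Using transitivity of induction to isolate any pair of adjacent tensor factors inside the full induction product, a standard bubble-sort argument reverses the order of the $r$ factors and yields $\ind \ims_\mu[k] \cong \ind \tims_\mu[k]$.

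I do not anticipate any serious obstacle: both key ingredients, namely the irreducibility and $\e_b^k$-formula of Lemma \ref{Lem: irr module coming from YD} (2) and the commutation rule of Lemma \ref{Lem: linking} (2) for segments sharing an endpoint, are already in hand. The only real work is the bookkeeping needed to match $\ims_\mu[k]$ with the constant-endpoint data $(a_i, \ell_i)$ and to check that part (2c) of Lemma \ref{Lem: irr module coming from YD} produces $\ims_{\mu^+}[k-1]$ with the correct shift of the starting indices.
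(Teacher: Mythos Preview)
Your proposal is correct and follows exactly the approach indicated in the paper, which simply records that the lemma follows from Lemma~\ref{Lem: irr module coming from YD} and Lemma~\ref{Lem: linking}. Your reparametrization $a_i=k-\mu_i+1$, $\ell_i=\mu_i$ (with common endpoint $b=k$), your identification of $\ims_{\mu^+}[k-1]$ via $(k-1)-(\mu_i-1)+1=k-\mu_i+1$, and your bubble-sort use of Lemma~\ref{Lem: linking}~(2) to obtain part~(2) are precisely the details that the paper leaves to the reader.
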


Let $T$ be a semistandard tableau of shape $\lambda=( \lambda_1 \ge
\cdots \ge \lambda_s > 0)$. Suppose that $T$ is not the maximal
vector $T_\lambda$. Write $\Psi_{\lambda}(T) = (\mu^{(1)}, \ldots,
\mu^{(s)})$ and
$$ \mu^{(i)} = ( \mu^{(i)}_1 \ge \mu^{(i)}_2 \ge \cdots \ge \mu^{(i)}_{\lambda_i} \ge 0 ) \quad (i=1,\ldots, s).$$
Recall the notations given in Lemma \ref{Lem: tableau}:
\begin{align*}
  i_T &:= \min \{ \mu_j^{(i)} + i-1 |\  1 \le i \le s,\ 1\le j \le \lambda_i,\ \mu_j^{(i)} > 0 \}, \\
  T^+ &:= \text{ the tableau of shape $\lambda$ obtained from $T$ by replacing the entries } \\
  & \qquad \text{$i_T+1$ by $i_T$ from the top row to the $i_T$th row.}
\end{align*}
We define
\begin{align*}
\overline{\mu}^{(i)} &:= ( \mu_j^{(i)} |\ \mu_j^{(i)} + i-1 \ne i_T) \qquad \text{ for } 1 \le i \le s, \\
\mu_{\rm min}  &:= ( \mu_j^{(i)} |\ \mu_j^{(i)} + i-1 = i_T,\ 1 \le i \le s   ) .
\end{align*}
Note that $\mu_{\rm min}$ is not the empty Young diagram
$(0,0,\ldots)$ and, by construction, for any component $\mu_j^{(i)}
$ in $ \mu_{\rm min} $, we have
\begin{align} \label{Eq: mu-min}
\mu^{(i)}_{j'} \ge \mu^{(i)}_{j}  \quad \text{ for } j' \le j, \qquad  \quad  \mu^{(i')}_{j'}+i' \ge \mu^{(i)}_{j} + i  \quad \text{ for } i' < i.
\end{align}

\begin{Exa}
We use the same notations as in Example \ref{Ex: lambda 1} and
Example \ref{Ex: lambda 2}. Consider the following diagram for
$\Psi_\lambda(T)$:

\vskip 1em
\begin{center}
\begin{texdraw}
\drawdim em \setunitscale 0.15  \linewd 0.6
\arrowheadtype t:F

\move(10 0) \lvec(10 -70)  
\linewd 0.3
\move(8 -58) \lvec(12 -58)  \htext(3 -59){\scriptsize$1$}
\move(8 -48) \lvec(12 -48)  \htext(3 -49){\scriptsize$2$}
\move(8 -38) \lvec(12 -38)  \htext(3 -39){\scriptsize$3$}
\move(8 -28) \lvec(12 -28)  \htext(3 -29){\scriptsize$4$}
\move(8 -18) \lvec(12 -18)  \htext(3 -19){\scriptsize$5$}
\move(8 -8) \lvec(12 -8)  \htext(3 -9){\scriptsize$6$}


\move(20 -58) \lvec(20 -8) \lvec(30 -8) \lvec(30 -58) \lvec(20 -58) \lfill f:0.8
\move(20 -48) \lvec(30 -48) \move(20 -38) \lvec(30 -38) \move(20 -28) \lvec(30 -28) \move(20 -18) \lvec(30 -18)
\move(32 -58) \lvec(32 -28) \lvec(42 -28) \lvec(42 -58) \lvec(32 -58) \lfill f:0.8
\move(32 -48) \lvec(42 -48) \move(32 -38) \lvec(42 -38)
\move(44 -58) \lvec(44 -38) \lvec(54 -38) \lvec(54 -58) \lvec(44 -58) \lfill f:0.4
\move(44 -48) \lvec(54 -48)
\move(56 -58) \lvec(56 -38) \lvec(66 -38) \lvec(66 -58) \lvec(56 -58) \lfill f:0.4
\move(56 -48) \lvec(66 -48)
\move(68 -58) \lvec(78 -58)
\move(80 -58) \lvec(90 -58)


\move(100 -48) \lvec(100 -18) \lvec(110 -18) \lvec(110 -48) \lvec(100 -48) \lfill f:0.8
\move(100 -38) \lvec(110 -38) \move(100 -28) \lvec(110 -28)
\move(112 -48) \lvec(112 -28) \lvec(122 -28) \lvec(122 -48) \lvec(112 -48) \lfill f:0.8
\move(112 -38) \lvec(122 -38)
\move(124 -48) \lvec(124 -38) \lvec(134 -38) \lvec(134 -48) \lvec(124 -48) \lfill f:0.4
\move(136 -48) \lvec(146 -48)

\move(156 -38) \lvec(156 -18) \lvec(166 -18) \lvec(166 -38) \lvec(156 -38) \lfill f:0.8
\move(156 -28) \lvec(166 -28)
\move(168 -38) \lvec(178 -38)

\move(188 -28) \lvec(188 -8) \lvec(198 -8) \lvec(198 -28) \lvec(188 -28) \lfill f:0.8
\move(188 -18) \lvec(198 -18)
\move(200 -28) \lvec(200 -18) \lvec(210 -18) \lvec(210 -28) \lvec(200 -28) \lfill f:0.8

\move(220 -18) \lvec(220 -8) \lvec(230 -8) \lvec(230 -18) \lvec(220 -18) \lfill f:0.8

\lpatt(0.5 1)
\move(20 -60)\clvec(30 -68)(40 -68)(54 -68)
\move(90 -60)\clvec(80 -68)(70 -68)(55 -68)
\htext(52 -75){$ \uparrow $}
\htext(50 -82){$\mu^{(1)}$}

\move(100 -50)\clvec(100 -58)(115 -58)(123 -58)
\move(146 -50)\clvec(146 -58)(131 -58)(124 -58)
\htext(120 -65){$ \uparrow $}
\htext(118 -72){$\mu^{(2)}$}

\move(156 -40)\clvec(156 -40)(159 -43)(167 -43)
\move(178 -40)\clvec(178 -40)(175 -43)(168 -43)
\htext(163 -50){$ \uparrow $}
\htext(161 -57){$\mu^{(3)}$}

\move(188 -30)\clvec(188 -30)(191 -33)(199 -33)
\move(210 -30)\clvec(210 -30)(207 -33)(200 -33)
\htext(195 -40){$ \uparrow $}
\htext(193 -48){$\mu^{(4)}$}

\move(220 -20)\clvec(221 -21)(221 -21)(225 -21)
\move(230 -20)\clvec(229 -21)(229 -21)(226 -21)
\htext(223 -28){$ \uparrow $}
\htext(220 -36){$\mu^{(5)}$}

\htext(47 -34){$ \downarrow $} \htext(45 -27){$\mu_3^{(1)}$}
\htext(59 -34){$ \downarrow $} \htext(57 -27){$\mu_4^{(1)}$}

\htext(127 -34){$ \downarrow $} \htext(126 -27){$\mu_3^{(2)}$}

\end{texdraw}
\end{center}

\noindent
Then we have
\begin{align*}
\mu_{\rm min} &= (\mu^{(1)}_3, \mu^{(1)}_4, \mu^{(2)}_3), \\
\overline{\mu}^{(1)} &= (\mu^{(1)}_1, \mu^{(1)}_2, \mu^{(1)}_5, \mu^{(1)}_6 ), \quad \overline{\mu}^{(2)} = (\mu^{(2)}_1, \mu^{(2)}_2, \mu^{(2)}_4 ), \\
\overline{\mu}^{(3)} &= (\mu^{(3)}_1, \mu^{(3)}_2 ), \quad \overline{\mu}^{(4)} = (\mu^{(4)}_1, \mu^{(4)}_2 ), \quad \overline{\mu}^{(5)} = (\mu^{(5)}_1).
\end{align*}
Pictorially, the partitions $\mu_{\rm min}$ and
$\overline{\mu}^{(i)}\ (i=1, \ldots,5)$ are given as follows: \vskip
1em
\begin{center}
\begin{texdraw}
\drawdim em \setunitscale 0.15  \linewd 0.6
\arrowheadtype t:F

\move(0 0) \lvec(0 -70)  
\linewd 0.3


\move(-2 -58) \lvec(2 -58)  \htext(-6 -59){\scriptsize$1$}
\move(-2 -48) \lvec(2 -48)  \htext(-6 -49){\scriptsize$2$}
\move(-2 -38) \lvec(2 -38)  \htext(-6 -39){\scriptsize$3$}
\move(-2 -28) \lvec(2 -28)  \htext(-6 -29){\scriptsize$4$}
\move(-2 -18) \lvec(2 -18)  \htext(-6 -19){\scriptsize$5$}
\move(-2 -8) \lvec(2 -8)  \htext(-6 -9){\scriptsize$6$}

\move(6 -58) \lvec(6 -38) \lvec(16 -38) \lvec(16 -58) \lvec(6 -58) \lfill f:0.4
\move(6 -48) \lvec(16 -48)
\move(18 -58) \lvec(18 -38) \lvec(28 -38) \lvec(28 -58) \lvec(18 -58) \lfill f:0.4
\move(18 -48) \lvec(28 -48)
\move(30 -48) \lvec(30 -38) \lvec(40 -38) \lvec(40 -48) \lvec(30 -48) \lfill f:0.4


\move(54 -58) \lvec(54 -8) \lvec(64 -8) \lvec(64 -58) \lvec(54 -58) \lfill f:0.8
\move(54 -48) \lvec(64 -48) \move(54 -38) \lvec(64 -38) \move(54 -28) \lvec(64 -28) \move(54 -18) \lvec(64 -18)
\move(66 -58) \lvec(66 -28) \lvec(76 -28) \lvec(76 -58) \lvec(66 -58) \lfill f:0.8
\move(66 -48) \lvec(76 -48) \move(66 -38) \lvec(76 -38)
\move(78 -58) \lvec(88 -58)
\move(90 -58) \lvec(100 -58)


\move(110 -48) \lvec(110 -18) \lvec(120 -18) \lvec(120 -48) \lvec(110 -48) \lfill f:0.8
\move(110 -38) \lvec(120 -38) \move(110 -28) \lvec(120 -28)
\move(122 -48) \lvec(122 -28) \lvec(132 -28) \lvec(132 -48) \lvec(122 -48) \lfill f:0.8
\move(122 -38) \lvec(132 -38)
\move(134 -48) \lvec(144 -48)

\move(156 -38) \lvec(156 -18) \lvec(166 -18) \lvec(166 -38) \lvec(156 -38) \lfill f:0.8
\move(156 -28) \lvec(166 -28)
\move(168 -38) \lvec(178 -38)

\move(188 -28) \lvec(188 -8) \lvec(198 -8) \lvec(198 -28) \lvec(188 -28) \lfill f:0.8
\move(188 -18) \lvec(198 -18)
\move(200 -28) \lvec(200 -18) \lvec(210 -18) \lvec(210 -28) \lvec(200 -28) \lfill f:0.8

\move(220 -18) \lvec(220 -8) \lvec(230 -8) \lvec(230 -18) \lvec(220 -18) \lfill f:0.8

\lpatt(0.5 1)
\move(6 -60)\clvec(6 -68)(20 -68)(23 -68)
\move(40 -60)\clvec(40 -68)(26 -68)(23 -68)
\htext(23 -75){$ \uparrow $}
\htext(21 -82){$\mu^{(\rm min)}$}

\lpatt(0.5 1)
\move(54 -60)\clvec(54 -68)(70 -68)(78 -68)
\move(100 -60)\clvec(100 -68)(86 -68)(78 -68)
\htext(77 -75){$ \uparrow $}
\htext(75 -82){$\overline{\mu}^{(1)}$}

\move(110 -50)\clvec(110 -58)(125 -58)(128 -58)
\move(146 -50)\clvec(146 -58)(131 -58)(128 -58)
\htext(127 -65){$ \uparrow $}
\htext(125 -72){$\overline{\mu}^{(2)}$}

\move(156 -40)\clvec(156 -40)(159 -43)(167 -43)
\move(178 -40)\clvec(178 -40)(175 -43)(168 -43)
\htext(163 -50){$ \uparrow $}
\htext(161 -57){$\overline{\mu}^{(3)}$}

\move(188 -30)\clvec(188 -30)(191 -33)(199 -33)
\move(210 -30)\clvec(210 -30)(207 -33)(200 -33)
\htext(195 -40){$ \uparrow $}
\htext(193 -48){$\overline{\mu}^{(4)}$}

\move(220 -20)\clvec(221 -21)(221 -21)(225 -21)
\move(230 -20)\clvec(229 -21)(229 -21)(226 -21)
\htext(223 -28){$ \uparrow $}
\htext(220 -36){$\overline{\mu}^{(5)}$}

\end{texdraw}
\end{center}
\end{Exa}

By Proposition \ref{Prop: irr module coming from YD}, Lemma \ref{Lem: linking} (2) and $\eqref{Eq: mu-min}$, we obtain
\begin{align} \label{Eq: ind T}
\ind \ms_T  &\simeq  \ind ( \ms_{\mu^{(s)}}[s] \boxtimes \cdots \boxtimes  \ms_{\mu^{(1)}}[1] ) \nonumber \\
& \simeq \ind ( \tms_{\mu^{(s)}}[s] \boxtimes \cdots \boxtimes  \tms_{\mu^{(1)}}[1] ) \nonumber \\
& \simeq \ind ( \tms_{\overline{\mu}^{(s)}}[s] \boxtimes \cdots \boxtimes  \tms_{\overline{\mu}^{(1)}}[1]
\boxtimes \tims_{\mu_{\rm min }}[i_T]  ).
\end{align}
In the same manner, we have
\begin{align} \label{Eq: ind T^+}
\ind \ms_{T^+}  \simeq \ind ( \tms_{\overline{\mu}^{(s)}}[s] \boxtimes \cdots \boxtimes  \tms_{\overline{\mu}^{(1)}}[1]
\boxtimes \tims_{\mu_{\rm min }^+}[i_T - 1]  ).
\end{align}

Now, we will prove our main result.

\begin{Thm} \label{Thm: crystal iso} \
\begin{enumerate}
\item[(1)] For $T \in \mathbf{B}(\lambda) $, $\hd \ind \ms_T$ is an irreducible $R^\lambda$-module.
\item[(2)] The map $\Phi_\lambda: \mathbf{B}(\lambda) \to
\mathfrak{B}(\lambda)$ defined by
$$ \Phi_\lambda(T) =  \hd \ind \ms_T \qquad (T \in \mathbf{B}(\lambda))$$
is a crystal isomorphism.
\end{enumerate}
\end{Thm}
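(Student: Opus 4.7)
The strategy is a simultaneous induction on $\HT(\lambda - \wt(T))$, proving (1) and (2) in tandem. For the base case $T = T_\lambda$ one has $\ms_T = \mathbf{1}$, the trivial $R(0)$-module, so $\hd\ind\ms_T = \mathbf{1}$ is vacuously an $R^\lambda$-module and corresponds, via Theorem \ref{Thm: iso-LV}, to the highest weight vector of $\mathfrak{B}(\lambda)$. For the inductive step with $T \neq T_\lambda$, Lemma \ref{Lem: tableau} selects $i_T$ and $\varepsilon := \varepsilon_{i_T}(T) = r$ (the number of parts of $\mu_{\min}$) and produces $T^+ := \e_{i_T}^{\,\varepsilon} T$ of strictly smaller height; by the induction hypothesis, $M^+ := \hd\ind\ms_{T^+}$ is an irreducible $R^\lambda$-module corresponding to $T^+$ under the isomorphism $\mathfrak{B}(\lambda) \cong \mathbf{B}(\lambda)$.

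Setting $M := \hd\ind\ms_T$, the linchpin is the technical claim
\[ \varepsilon_{i_T}(M) = r, \qquad \e_{i_T}^{\,r} M \cong M^+ . \]
Granted this, the relation $\f_{i_T}^{\,r} T^+ = T$ on the tableau side, which holds because $\varepsilon_{i_T}(T^+) = 0$, forces $M \cong \f_{i_T}^{\,r} M^+$ in $\mathfrak{B}(\infty)$; hence $M$ corresponds to $T$ under the crystal isomorphism and in particular lies in $\mathfrak{B}(\lambda)$, advancing both (1) and (2). To prove the technical claim, I would exploit the factorizations \eqref{Eq: ind T} and \eqref{Eq: ind T^+},
\[ \ind \ms_T \simeq \ind(N \boxtimes P), \qquad \ind \ms_{T^+} \simeq \ind(N \boxtimes P^+) , \]
where $N := \tms_{\overline{\mu}^{(s)}}[s] \boxtimes \cdots \boxtimes \tms_{\overline{\mu}^{(1)}}[1]$, $P := \ind\tims_{\mu_{\min}}[i_T]$, and $P^+ := \ind\tims_{\mu_{\min}^+}[i_T - 1]$. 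Lemma \ref{Lem: inv segment} pins down $\varepsilon_{i_T}(P) = r$ and $\e_{i_T}^{\,r} P \simeq P^+$. Iterated application of Lemma \ref{Lem: exact} (the Khovanov--Lauda--Rouquier Mackey theorem) transports this calculation to $e_{i_T}^{\,r}\ind(N \boxtimes P)$, and Lemma \ref{Lem: e_i and tilde e_i} converts the outcome at the level of the head into the desired statement about $\e_{i_T}^{\,r}$. A matching lower bound on $\varepsilon_{i_T}(M)$ is supplied by the character-multiplicity argument already used in the proof of Theorem \ref{Thm: hd indT is irr}, applied to $\Delta_{i_T^r} M$.

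The principal obstacle is that each row-segment $\tms_{\overline{\mu}^{(i)}}[i]$ carries entries in the range $\{i, i+1, \ldots, i + \mu^{(i)}_j - 1\}$, and these generally include both $i_T$ and $i_T+1$, so one does \emph{not} have $e_{i_T} N = 0$ on the nose and the Mackey filtration genuinely has several terms to control. The resolution rests on the minimality built into the definition of $i_T$ and the structural inequalities \eqref{Eq: mu-min}: combined with the quantum Serre identities \eqref{Eq: Serre |i-j|>1}--\eqref{Eq: Serre |i-j|=1}, they show that the only $e_{i_T}$-reductions of $\ind \ms_T$ that survive as composition factors of the irreducible head $M$ are those originating from the $P$-factor, matching precisely the modification passing from $T$ to $T^+$. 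Once the technical claim is secured, the connectedness of $\mathbf{B}(\lambda)$ and the uniqueness of the strict crystal embedding $\iota_\lambda$ of \eqref{Eq: Crystal embedding} promote $\Phi_\lambda$ to the desired crystal isomorphism $\mathbf{B}(\lambda) \overset{\sim}{\to} \mathfrak{B}(\lambda)$.
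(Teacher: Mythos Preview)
Your overall architecture matches the paper's proof: induction on $\HT(\lambda-\wt(T))$, use of $i_T$ and $T^+$ from Lemma~\ref{Lem: tableau}, the factorizations \eqref{Eq: ind T}--\eqref{Eq: ind T^+}, and the key technical claim $\varepsilon_{i_T}(M)=r$, $\e_{i_T}^{\,r}M\simeq \hd\ind\ms_{T^+}$, assembled from Lemma~\ref{Lem: exact}, Lemma~\ref{Lem: inv segment}, and Lemma~\ref{Lem: e_i and tilde e_i}.

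However, the paragraph you label ``the principal obstacle'' reflects a misunderstanding, and the Serre-relation argument you sketch there is unnecessary. The functor $e_{i_T}$ (equivalently $\Delta_{i_T}$) only sees the \emph{last} letter of a word. Each segment $\ms_{(i;\mu^{(i)}_j)}$ appearing in $N$ has character $(i,i+1,\ldots,i+\mu^{(i)}_j-1)$; by the very definition of $\overline{\mu}^{(i)}$ one has $i+\mu^{(i)}_j-1\neq i_T$, so $\Delta_{i_T}\ms_{(i;\mu^{(i)}_j)}=0$. Since the last letter of any shuffle is the last letter of one of the constituent sequences (Proposition~\ref{Prop: shuffle}), no word in $\ch(\ind N)$ ends in $i_T$, and hence $e_{i_T}(\ind N)=0$ outright. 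This is precisely why $\mu_{\min}$ was split off. With $e_{i_T}(\ind N)=0$, Lemma~\ref{Lem: exact} collapses to an isomorphism $e_{i_T}\ind(N\boxtimes P)\simeq \ind(N\boxtimes e_{i_T}P)$, and iterating gives $[e_{i_T}^{\,r}\ind\ms_T]=q^{-r(r-1)/2}[r]_q!\,[\ind\ms_{T^+}]$ directly. The matching statement $\varepsilon_{i_T}(M)=r$ then follows from Frobenius reciprocity: the irreducible module $(\ind\tms_{\overline{\mu}^{(s)}}[s])\boxtimes\cdots\boxtimes(\ind\tims_{\mu_{\min}}[i_T])$ embeds in $\res M$, giving $\varepsilon_{i_T}(M)\ge r$, while $e_{i_T}^{\,r+1}\ind\ms_T=0$ gives the upper bound.

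A minor point on the endgame: rather than arguing inductively that $M$ lands in $\mathfrak{B}(\lambda)$, the paper first shows that $T\mapsto \hd\ind\ms_T\otimes t_\lambda\otimes c$ is the unique strict crystal embedding $\mathbf{B}(\lambda)\hookrightarrow\mathfrak{B}(\infty)\otimes\mathbf{T}^\lambda\otimes\mathbf{C}$, and then invokes the embedding $\mathfrak{B}(\lambda)\hookrightarrow\mathfrak{B}(\infty)\otimes\mathbf{T}^\lambda$ from \cite[(5.10)]{LV09} to conclude both (1) and (2) simultaneously. Your formulation via $\f_{i_T}^{\,r}M^+$ amounts to the same thing once routed through $\mathfrak{B}(\infty)$.
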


\begin{proof} Let $\lambda=( \lambda_1 \ge \cdots \ge \lambda_s > 0)$, and let
\begin{align*}
\phi_\lambda : &\  \mathbf{B}(\lambda) \to \mathfrak{B}(\infty) \otimes \mathbf{T}^\lambda \otimes \mathbf{C}, \\
 & \quad  T \ \ \mapsto \ \ \hd \ind \ms_T \otimes t_\lambda \otimes c .
\end{align*}
We first show that $\phi_\lambda$ is the strict crystal embedding which maps the maximal vector $T_\lambda$ to $\mathbf{1} \otimes t_\lambda \otimes c $. Here, $\mathbf{1}$ is the trivial $R(0)$-module.
It is obvious that $\phi_\lambda$ maps $T_\lambda$ to $\mathbf{1} \otimes t_\lambda \otimes c$. If $ T_\lambda = \e_{i_1}^{\rm max} \cdots \e_{i_k}^{ \rm max}  T$
 for $ T \in \mathbf{B}(\lambda) $ and $ i_j \in I$, then it suffices to show that
$$ \e_{i_1}^{\rm max} \cdots \e_{i_k}^{\max} \phi_\lambda(T)  =  \phi_\lambda(\e_{i_1}^{\rm max} \cdots \e_{i_k}^{\rm max}  T). $$
We will use  induction on $\HT( \lambda - \wt(T) )$. If $\wt(T) = \lambda$, then
there is nothing to prove. Assume that $\HT( \lambda - \wt(T) )
> 0 $. Write $\Psi_{\lambda}(T) = (\mu^{(1)}, \ldots, \mu^{(s)})$ and
$$ \mu^{(i)} = ( \mu^{(i)}_1 \ge \mu^{(i)}_2 \ge \cdots \ge \mu^{(i)}_{\lambda_i} \ge 0 ) \quad (i=1,\ldots, s).$$
From $\eqref{Eq: ind T}$, we obtain
\begin{align*}
\ind \ms_T  \simeq \ind ( \tms_{\overline{\mu}^{(s)}}[s] \boxtimes \cdots \boxtimes  \tms_{\overline{\mu}^{(1)}}[1]
\boxtimes \tims_{\mu_{\rm min }}[i_T]  ).
\end{align*}
Let
$$\varepsilon := \varepsilon_{i_T} ( \ind \tims_{\mu_{\rm min }}[i_T] ) \in \Z_{>0}. $$
Since $ \Delta_{i_T} ( \ms_{( i; \mu^{(i)}_j )}  ) =0 $ for any $ \mu^{(i)}_j \in \overline{\mu}^{(i)} $, it follows from Proposition \ref{Prop: shuffle} that
$$ \varepsilon_{i_T} ( \ind ( \tms_{\overline{\mu}^{(s)}}[s] \boxtimes \cdots \boxtimes  \tms_{\overline{\mu}^{(1)}}[1])) = 0. $$
By Lemma \ref{Lem: Frobenius reciprocity}, we have the following nontrivial map:
$$  (\ind  \tms_{\overline{\mu}^{(s)}}[s]) \boxtimes \cdots \boxtimes  (\ind \tms_{\overline{\mu}^{(1)}}[1])
\boxtimes (\ind  \tims_{\mu_{\rm min }}[i_T])  \longrightarrow \res (\hd \ind \ms_T) , $$
which implies that, by Proposition \ref{Prop: irr module coming from YD} and Lemma \ref{Lem: inv segment},
$$ \varepsilon = \varepsilon_{i_T} ( \hd \ind \ms_T ) . $$
Hence, by Lemma \ref{Lem: e_i and tilde e_i}, we have
$$ [e_{i_T}^\varepsilon (\hd \ind \ms_T)] =  q^{-\frac{\varepsilon(\varepsilon-1)}{2}}[\varepsilon]_q!  [\e_{i_T}^\varepsilon (\hd \ind \ms_T)] .$$

On the other hand, by Lemma \ref{Lem: exact}, we obtain
\begin{align*}
e_{i_T} ( \ind \ms_T) & \simeq e_{i_T}( \ind ( \tms_{\overline{\mu}^{(s)}}[s] \boxtimes \cdots \boxtimes  \tms_{\overline{\mu}^{(1)}}[1]
\boxtimes \tims_{\mu_{\rm min }}[i_T]  )) \\
 & \simeq  \ind ( \ind( \tms_{\overline{\mu}^{(s)}}[s] \boxtimes \cdots \boxtimes \tms_{\overline{\mu}^{(1)}}[1])
\boxtimes e_{i_T} \ind (\tims_{\mu_{\rm min }}[i_T])  ).
\end{align*}
It follows from Lemma \ref{Lem: e_i and tilde e_i}, Lemma \ref{Lem: inv segment} and $\eqref{Eq: ind T^+}$ that
\begin{align*}
[e_{i_T}^\varepsilon ( \ind \ms_T)] & \simeq  [\ind ( \ind( \tms_{\overline{\mu}^{(s)}}[s] \boxtimes \cdots \boxtimes  \tms_{\overline{\mu}^{(1)}}[1])
\boxtimes e_{i_T}^{\varepsilon} \ind (\tims_{\mu_{\rm min }}[i_T])  )] \\
&\simeq  q^{-\frac{\varepsilon(\varepsilon-1)}{2}}[\varepsilon]_q![\ind ( \ind( \tms_{\overline{\mu}^{(s)}}[s] \boxtimes \cdots \boxtimes  \tms_{\overline{\mu}^{(1)}}[1])
\boxtimes \e_{i_T}^{\varepsilon} \ind (\tims_{\mu_{\rm min }}[i_T])  )] \\
&\simeq  q^{-\frac{\varepsilon(\varepsilon-1)}{2}}[\varepsilon]_q![\ind ( \ind( \tms_{\overline{\mu}^{(s)}}[s] \boxtimes \cdots \boxtimes  \tms_{\overline{\mu}^{(1)}}[1])
\boxtimes  \ind (\tims_{\mu_{\rm min }^+}[i_T - 1])  )] \\
&\simeq  q^{-\frac{\varepsilon(\varepsilon-1)}{2}}[\varepsilon]_q![\ind ( \tms_{\overline{\mu}^{(s)}}[s] \boxtimes \cdots \boxtimes  \tms_{\overline{\mu}^{(1)}}[1]
\boxtimes  \tims_{\mu_{\rm min }^+}[i_T - 1]  )] \\
&\simeq q^{-\frac{\varepsilon(\varepsilon-1)}{2}}[\varepsilon]_q![\ind \ms_{T^+}].
\end{align*}
Since $e_{i_T}$ is an exact functor, we obtain an exact sequence
$$ e_{i_T}^\varepsilon ( \ind \ms_T) \longrightarrow  e_{i_T}^\varepsilon (\hd \ind \ms_T) \longrightarrow 0, $$
which yields that $ \hd \ind \ms_{T^+} \simeq  \e_{i_T}^\varepsilon (\hd \ind \ms_T) $.
By Lemma \ref{Lem: tableau}, we conclude
\begin{align*}
\phi_\lambda( \e_{i_T}^\varepsilon T ) &= \hd  \ind ( \ms_{\e_{i_T}^\varepsilon T} ) \otimes t_\lambda \otimes c \\
& \simeq \hd  \ind  \ms_{T^+}  \otimes t_\lambda \otimes c \\
& \simeq
\e_{i_T}^\varepsilon ( \hd  \ind  \ms_T ) \otimes t_\lambda \otimes c \\
& =  \e_{i_T}^\varepsilon \phi_\lambda( T ).
\end{align*}
By induction hypothesis, $\phi_\lambda$ is the strict crystal embedding. Therefore, our assertions (1) and (2) follow
from the crystal embedding  $ \mathfrak{B}(\lambda) \to \mathfrak{B}(\infty) \otimes \mathbf{T}^\lambda\ (M \mapsto  \infl_\lambda M \otimes t_\lambda  )$ given in \cite[(5.10)]{LV09}.

\end{proof}

As a result, the set $\{ \hd \ind \ms_T \mid T \in
\mathbf{B}(\lambda) \}$ gives a complete list of irreducible graded
$R$-modules up to isomorphism and grading shift.

Let us  construct the inverse morphism $ \Theta_\lambda:
\mathfrak{B}(\lambda) \to \mathbf{B}(\lambda) $ of $\Phi_{\lambda}$.
The following lemma is crucial.

\begin{Lem} \label{Lem: M is hd ind T}
Let $M$ be an irreducible graded $R^\lambda(\alpha)$-module and let
$T$ be a semistandard tableau in $\mathbf{B}(\lambda)$. Then the
following are equivalent.
\begin{enumerate}
\item $M$ is isomorphic to $\hd \ind \ms_{T}$.
\item $\wt(T) = \lambda - \alpha$ and $ \dim \HOM(\ms_T, \res M) \ne 0$.
\end{enumerate}
\end{Lem}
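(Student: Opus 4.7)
The plan is to derive both implications from Frobenius reciprocity (Lemma \ref{Lem: Frobenius reciprocity}), combined with the irreducibility of the head (Theorem \ref{Thm: hd indT is irr}) and the crystal isomorphism (Theorem \ref{Thm: crystal iso}). Write $\Psi_\lambda(T) = (\mu^{(1)}, \ldots, \mu^{(s)})$ and set $\beta_T := \sum_{i=1}^s \alpha_{\mu^{(i)}}[i]$, so that $\ms_T$ is a module over $R(\alpha_{\mu^{(s)}}[s]) \otimes \cdots \otimes R(\alpha_{\mu^{(1)}}[1])$ and $\ind \ms_T$ is an $R(\beta_T)$-module.

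For $(1) \Rightarrow (2)$: assume $M \cong \hd \ind \ms_T$. Since $M \in R^\lambda(\alpha)$-$\fMod$ and $\hd \ind \ms_T \in R^\lambda(\beta_T)$-$\fMod$, we must have $\alpha = \beta_T$. Theorem \ref{Thm: crystal iso} then yields
$$\wt(T) \;=\; \wt^\lambda(\Phi_\lambda(T)) \;=\; \lambda - \beta_T \;=\; \lambda - \alpha.$$
The canonical surjection $\ind \ms_T \to \hd \ind \ms_T \cong M$ is a nontrivial element of $\HOM_{R(\alpha)}(\ind \ms_T, M)$, and Lemma \ref{Lem: Frobenius reciprocity} transports it to a nonzero element of $\HOM(\ms_T, \res M)$.

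For $(2) \Rightarrow (1)$: the condition $\wt(T) = \lambda - \alpha$ forces $\beta_T = \alpha$, so $\ms_T$ and $\res M$ live over the same tensor-product subalgebra. A nonzero element of $\HOM(\ms_T, \res M)$ produces, again via Lemma \ref{Lem: Frobenius reciprocity}, a nonzero homomorphism $\ind \ms_T \to M$. Irreducibility of $M$ makes this map surjective, so $M$ is an irreducible quotient of $\ind \ms_T$. By Theorem \ref{Thm: hd indT is irr}, $\hd \ind \ms_T$ is irreducible, which forces $\ind \ms_T$ to admit a unique maximal graded submodule; consequently every irreducible quotient of $\ind \ms_T$ is isomorphic to $\hd \ind \ms_T$, giving $M \cong \hd \ind \ms_T$.

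The argument is essentially formal once Theorems \ref{Thm: hd indT is irr} and \ref{Thm: crystal iso} are in hand; there is no serious obstacle. The only points requiring a little care are the standard fact that a module whose head is simple has a unique maximal submodule (used in the final step), and the bookkeeping that ensures $\ms_T$ and $\res M$ are modules over the same algebra---which is exactly what the weight condition $\wt(T) = \lambda - \alpha$ encodes.
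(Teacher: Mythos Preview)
Your proof is correct and follows essentially the same route as the paper's: Frobenius reciprocity in both directions, with Theorem \ref{Thm: hd indT is irr} supplying the uniqueness of the irreducible quotient for $(2)\Rightarrow(1)$. The only difference is cosmetic: the paper dismisses the weight equality in $(1)\Rightarrow(2)$ as ``clear'' (it follows directly from $\alpha=\beta_T$ and the combinatorial definition of $\wt(T)$), whereas you route it through Theorem \ref{Thm: crystal iso}, which is valid but heavier than necessary.
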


\begin{proof}
Assume that $M$ is isomorphic to $\hd \ind \ms_{T}$. Clearly,
$\wt(T) = \lambda - \alpha$. Moreover, by Lemma \ref{Lem: Frobenius
reciprocity}, we have
$$ \dim \HOM(\ms_T, \res M) \ne 0.$$

Conversely, suppose that $\wt(T) = \lambda - \alpha$ and $ \dim
\HOM(\ms_T, \res M) \ne 0$. Then we have a nontrivial map
$$ \ind \ms_T \longrightarrow M \longrightarrow 0. $$
Then it follows from Theorem \ref{Thm: hd indT is irr} that $ M
\cong  \hd \ind \ms_T . $
\end{proof}

Given an irreducible $R^{\lambda}(\alpha)$-module $M$, we take $T_M
\in \mathbf{B}(\lambda)$ such that
$$\wt(T_M) = \lambda - \alpha \quad \text{ and }\quad   \dim \HOM(\ms_{T_M}, \res M) \ne 0 . $$
By Theorem \ref{Thm: crystal iso} and Lemma \ref{Lem: M is hd ind
T}, the tableau $T_M$ is well-defined. Now, it is straightforward to
verify that $\Phi_{\lambda}$ and $\Theta_{\lambda}$ are inverses to
each other.

\begin{Prop}
The map defined by $ \Theta_\lambda : \mathfrak{B}(\lambda) \to
\mathbf{B}(\lambda)$ defined by
$$ \Theta_\lambda (M) = T_M \quad (M \in \mathfrak{B}(\lambda)) $$
is the inverse morphism of $\Phi_{\lambda}$.
\end{Prop}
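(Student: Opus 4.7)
The plan is to derive both inverse identities directly from Theorem \ref{Thm: crystal iso} and Lemma \ref{Lem: M is hd ind T}. The bulk of the work has already been done; what remains is essentially a bookkeeping argument, and the only mild subtlety is verifying that $T_{M}$ is unambiguously defined.

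First I would establish well-definedness of $\Theta_{\lambda}$. Given $M\in\mathfrak{B}(\lambda)$, there exists at least one candidate for $T_{M}$ because $\Phi_{\lambda}$ is surjective by Theorem \ref{Thm: crystal iso}: writing $M\cong\hd\ind\ms_{T}$ for some $T\in\mathbf{B}(\lambda)$, Lemma \ref{Lem: M is hd ind T} guarantees that $T$ satisfies $\wt(T)=\lambda-\alpha$ and $\dim\HOM(\ms_{T},\res M)\ne 0$. Conversely, if $T'$ is any semistandard tableau satisfying these two conditions, then Lemma \ref{Lem: M is hd ind T} forces $\hd\ind\ms_{T'}\cong M\cong\hd\ind\ms_{T}$, and the injectivity of $\Phi_{\lambda}$ (again Theorem \ref{Thm: crystal iso}) yields $T'=T$. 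So $T_{M}$ is the unique such tableau, and $\Theta_{\lambda}$ is a genuine map.

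Next I would check $\Phi_{\lambda}\circ\Theta_{\lambda}=\id_{\mathfrak{B}(\lambda)}$. For $M\in\mathfrak{B}(\lambda)$ with $M\in R^{\lambda}(\alpha)\text{-}\fMod$, the defining conditions of $T_{M}$ are exactly the hypotheses of Lemma \ref{Lem: M is hd ind T}(2), so that lemma provides
\[
\Phi_{\lambda}(\Theta_{\lambda}(M))=\hd\ind\ms_{T_{M}}\cong M.
\]

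Finally I would check $\Theta_{\lambda}\circ\Phi_{\lambda}=\id_{\mathbf{B}(\lambda)}$. For $T\in\mathbf{B}(\lambda)$ set $M:=\Phi_{\lambda}(T)=\hd\ind\ms_{T}$. By Lemma \ref{Lem: M is hd ind T} applied in the other direction, $T$ itself satisfies $\wt(T)=\lambda-\alpha$ and $\dim\HOM(\ms_{T},\res M)\ne 0$, and by the uniqueness established above, $T=T_{M}=\Theta_{\lambda}(M)=\Theta_{\lambda}(\Phi_{\lambda}(T))$, completing the proof.

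The only step requiring any genuine thought is the uniqueness in the well-definedness argument, and that reduces immediately to the injectivity of $\Phi_{\lambda}$ combined with the ``only if'' direction of Lemma \ref{Lem: M is hd ind T}; once that is in place, the remaining identities are formal.
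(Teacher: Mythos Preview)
Your argument is correct and follows exactly the route the paper indicates: the paper simply asserts that well-definedness of $T_M$ follows from Theorem \ref{Thm: crystal iso} and Lemma \ref{Lem: M is hd ind T}, and that the inverse identities are then ``straightforward to verify,'' without spelling out the details. You have supplied precisely those details, using the bijectivity of $\Phi_\lambda$ and both directions of Lemma \ref{Lem: M is hd ind T} in the expected way.
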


\vskip 3em

\section{Irreducible $R$-modules and marginally large tableaux} \label{Sec: KL alg and B(infty)}

In this section, using the results proved in Section \ref{Sec: KL
alg and B(lambda)}, we construct an explicit crystal isomorphism
$\Phi_{\infty} : \mathbf{B}(\infty) \overset{\sim} \longrightarrow
\mathfrak{B}(\infty)$. Consequently, we obtain a complete list of
irreducible graded $R$-modules up to isomorphism and grading shift.

Let us recall the map $\Psi_{\infty}: \mathbf{B}(\infty) \to
\mathcal{Y}^n$, $T \mapsto (\mu^{(1)}, \ldots, \mu^{(n)})$ defined
in \eqref{Eq: injection Psi infity}. For $T \in \mathbf{B}(\infty)$,
we define
$$ \ms_T := \ms_{\mu^{(n)}}[n] \boxtimes \ms_{\mu^{(n-1)}}[n-1] \boxtimes \cdots \boxtimes \ms_{\mu^{(1)}}[1]. $$
By Lemma \ref{Lem: Psi B(infty)}, if $\iota_\lambda(T') = T \otimes
t_\lambda \otimes c$ for $T' \in \mathbf{B}(\lambda)$, then we have
\begin{align} \label{Eq: ms of T and Tml}
\ms_{T'} = \ms_T .
\end{align}

\begin{Thm}  \label{Thm: crystal iso B(infty)}
The map $\Phi_\infty: \mathbf{B}(\infty) \to \mathfrak{B}(\infty) $
defined by
$$ \Phi_\infty(T) := \hd \ind \ms_T \quad (T \in \mathbf{B}(\infty))$$
is a crystal isomorphism.
\end{Thm}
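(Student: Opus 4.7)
The plan is to reduce Theorem \ref{Thm: crystal iso B(infty)} to Theorem \ref{Thm: crystal iso} by exploiting two compatible crystal embeddings already in play: $\iota_\lambda: \mathbf{B}(\lambda) \hookrightarrow \mathbf{B}(\infty) \otimes \mathbf{T}^\lambda \otimes \mathbf{C}$, $T \mapsto T_{\rm ml} \otimes t_\lambda \otimes c$, on the tableau side, and the inflation embedding $\mathfrak{B}(\lambda) \hookrightarrow \mathfrak{B}(\infty) \otimes \mathbf{T}^\lambda$, $M \mapsto \infl_\lambda M \otimes t_\lambda$, on the module side from \cite[(5.10)]{LV09}. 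Given $T \in \mathbf{B}(\infty)$, I will choose $\lambda \in P^+$ sufficiently dominant so that $T = T'_{\rm ml}$ for some $T' \in \mathbf{B}(\lambda)$ (such $\lambda$ always exists, since $T$ is obtained from $T_\infty$ by finitely many $\f_i$'s). Equation \eqref{Eq: ms of T and Tml} then gives $\ms_T = \ms_{T'}$, so $\hd \ind \ms_T = \hd \ind \ms_{T'}$, and by Theorem \ref{Thm: hd indT is irr} this is an irreducible $R(\alpha)$-module. Since $\ms_T$ depends only on $T$, the map $\Phi_\infty$ is well-defined into $\mathfrak{B}(\infty)$, independently of the choice of $\lambda$.

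To upgrade $\Phi_\infty$ to a crystal isomorphism, I will verify the commutative square
\[
\begin{array}{ccc}
\mathbf{B}(\lambda) & \stackrel{\Phi_\lambda}{\longrightarrow} & \mathfrak{B}(\lambda) \\
\downarrow & & \downarrow \\
\mathbf{B}(\infty) \otimes \mathbf{T}^\lambda \otimes \mathbf{C} & \stackrel{\Phi_\infty \otimes \mathrm{id}}{\longrightarrow} & \mathfrak{B}(\infty) \otimes \mathbf{T}^\lambda \otimes \mathbf{C}
\end{array}
\]
in which the top row is Theorem \ref{Thm: crystal iso} and the vertical arrows are strict crystal embeddings (a formal $\mathbf{C}$-factor is harmlessly adjoined to the target of the LV09 embedding on the right). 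Because $\varepsilon_i(t_\lambda) = \varphi_i(t_\lambda) = -\infty$ and $\varepsilon_i(c) = \varphi_i(c) = 0$, the tensor product rule forces $\e_i$ and $\f_i$, restricted to elements of the image of $\iota_\lambda$, to act essentially on the first tensor factor; consequently, commutativity of the diagram translates directly into the statement that $\Phi_\infty$ commutes with $\e_i, \f_i$ and preserves $\wt, \varepsilon_i, \varphi_i$.

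Injectivity of $\Phi_\infty$ follows by the same $\lambda$-exhaustion: if $\Phi_\infty(T_1) = \Phi_\infty(T_2)$, choose $\lambda$ accommodating both preimages under $\iota_\lambda$ and invoke injectivity of $\Phi_\lambda$. Surjectivity follows since every irreducible graded $R(\alpha)$-module is annihilated by $I^\lambda(\alpha)$ for $\lambda$ sufficiently dominant, and hence comes from an irreducible $R^\lambda(\alpha)$-module, which by Theorem \ref{Thm: crystal iso} lies in the image of $\Phi_\lambda$ and therefore of $\Phi_\infty$. The main obstacle I anticipate is the careful reconciliation of the passage $T' \mapsto T'_{\rm ml}$ at the combinatorial level with the compositional construction of $\ms_T$ (as well as the formal adjunction of the $\mathbf{C}$ factor on the module side), but once one has the identity \eqref{Eq: ms of T and Tml} together with LV09's embedding, the reduction to the finite-highest-weight case is essentially mechanical.
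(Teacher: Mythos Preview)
Your proposal is correct and follows essentially the same approach as the paper: reduce to Theorem~\ref{Thm: crystal iso} by choosing $\lambda$ sufficiently dominant so that $T$ lies in the image of $\iota_\lambda$, then invoke the identity $\ms_{T'} = \ms_T$ from~\eqref{Eq: ms of T and Tml}. The paper's presentation is slightly more direct---it verifies pointwise that $\Phi_\infty$ commutes with a specific string of $\e_i$'s leading back to $T_\infty$, and then appeals implicitly to connectedness of $B(\infty)$---whereas you package the same reduction as a commutative square and treat injectivity and surjectivity separately; but the mathematical content is the same.
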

\begin{proof}
It is obvious that $\Phi_\infty$ maps the maximal vector $T_\infty$
of $\mathbf{B}(\infty)$ to the maximal vector $\mathbf{1}$ of $
\mathfrak{B}(\infty)$. Here, $\mathbf{1}$ is the trivial
$R(0)$-module. Take a tableau $T \in \mathbf{B}(\infty)$ and suppose
that $ T_\infty = \e_{i_1}^{j_1} \e_{i_2}^{j_2} \cdots
\e_{i_t}^{j_t}T $ for some $i_k = 1, \ldots , n,\  j_k \in \Z_{>0}$.
Then it suffices to show that
$$ \e_{i_1}^{j_1} \e_{i_2}^{j_2} \cdots \e_{i_t}^{j_t} \Phi_\infty(T)  =  \Phi_\infty(\e_{i_1}^{j_1} \e_{i_2}^{j_2} \cdots \e_{i_t}^{j_t}  T ) .   $$
Take a dominant integral weight $\lambda \in P^{+}$ with
$\lambda(h_i) \gg 0$ for all $i \in I$ so that one can find $T' \in
\mathbf{B}(\lambda)$ satisfying
$$\iota_\lambda(T') = T \otimes t_\lambda \otimes c,$$
where $\iota_\lambda: \mathbf{B}(\lambda) \to \mathbf{B}(\infty)
\otimes \mathbf{T}^\lambda \otimes \mathbf{C}$ is the crystal
embedding given in \eqref{Eq: Crystal embedding}. Note that $
T_\lambda =  \e_{i_1}^{j_1} \e_{i_2}^{j_2} \cdots \e_{i_t}^{j_t}T'
$. Hence it follows from Theorem \ref{Thm: crystal iso} and
$\eqref{Eq: ms of T and Tml}$ that
\begin{align*}
\Phi_\infty(\e_{i_1}^{j_1} \e_{i_2}^{j_2} \cdots \e_{i_t}^{j_t}T) &=
\Phi_\infty(T_\infty) = \Phi_\lambda(T_\lambda)
= \e_{i_1}^{j_1} \e_{i_2}^{j_2} \cdots \e_{i_t}^{j_t} \Phi_\lambda(T') \\
&= \e_{i_1}^{j_1} \e_{i_2}^{j_2} \cdots \e_{i_t}^{j_t} (\hd \ind \ms_{T'})
= \e_{i_1}^{j_1} \e_{i_2}^{j_2} \cdots \e_{i_t}^{j_t} (\hd \ind \ms_{T}) \\
&=  \e_{i_1}^{j_1} \e_{i_2}^{j_2} \cdots \e_{i_t}^{j_t}
\Phi_\infty(T),
\end{align*}
which completes the proof.
\end{proof}

We now construct the inverse map
$$ \Theta_\infty: \mathfrak{B}(\infty) \to \mathbf{B}(\infty)  $$
of the crystal isomorphism $\Phi_\infty$. Given an irreducible $R(\alpha)$-module $M$, we take $T_M \in \mathbf{B}(\infty)$ such that
$$ \wt(T_M) = - \alpha \quad \text{ and } \quad \dim \HOM(\ms_{T_M}, \res M) \ne 0. $$
By Lemma \ref{Lem: M is hd ind T} and Theorem \ref{Thm: crystal iso
B(infty)}, we obtain the following proposition.
\begin{Prop}
The map $\Theta_\infty: \mathfrak{B}(\infty) \to \mathbf{B}(\infty)$
defined by
$$ \Theta_\infty(T) = T_M \quad (T \in \mathfrak{B}(\infty))  $$
is the inverse morphism of $\Phi_\infty$.
\end{Prop}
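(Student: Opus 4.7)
The plan is to mirror the short argument that preceded Theorem \ref{Thm: crystal iso B(infty)} in the cyclotomic setting. By Theorem \ref{Thm: crystal iso B(infty)}, $\Phi_\infty$ is already a crystal isomorphism, hence in particular a bijection; so for each irreducible $R(\alpha)$-module $M$ there is a unique $T^\ast \in \mathbf{B}(\infty)$ with $\hd \ind \ms_{T^\ast} \cong M$. The proposition reduces to identifying this $T^\ast$ with the tableau $T_M$ singled out by the two defining conditions $\wt(T_M) = -\alpha$ and $\HOM(\ms_{T_M}, \res M) \ne 0$. Once $T^\ast = T_M$ is established, $\Theta_\infty(M) = T_M = T^\ast = \Phi_\infty^{-1}(M)$, which is exactly the claim.

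To identify them, I would establish the $\mathbf{B}(\infty)$-analogue of Lemma \ref{Lem: M is hd ind T}: for irreducible $M$ and $T \in \mathbf{B}(\infty)$, $M \cong \hd \ind \ms_T$ if and only if $\wt(T) = -\alpha$ and $\HOM(\ms_T, \res M) \ne 0$. The forward direction is immediate from Frobenius reciprocity (Lemma \ref{Lem: Frobenius reciprocity}): the surjection $\ind \ms_T \twoheadrightarrow \hd \ind \ms_T \cong M$ yields a nonzero element of $\HOM(\ind \ms_T, M) \cong \HOM(\ms_T, \res M)$. For the reverse, a nonzero $\ms_T \to \res M$ corresponds, again by Lemma \ref{Lem: Frobenius reciprocity}, to a nonzero map $\ind \ms_T \to M$; since $M$ is irreducible, this map is surjective, so $M$ is a quotient of $\ind \ms_T$ and hence a quotient of its head. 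Irreducibility of $\hd \ind \ms_T$ for every $T \in \mathbf{B}(\infty)$, which is what makes $\Phi_\infty$ land in $\mathfrak{B}(\infty)$ in the first place, is built into Theorem \ref{Thm: crystal iso B(infty)} (it was lifted from the finite-shape case Theorem \ref{Thm: hd indT is irr} via the embedding $\iota_\lambda$ and the identity \eqref{Eq: ms of T and Tml}); this forces $M \cong \hd \ind \ms_T$.

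The only point requiring any care is the well-definedness of $T_M$, namely the uniqueness of a $T \in \mathbf{B}(\infty)$ with $\wt(T) = -\alpha$ and $\HOM(\ms_T, \res M) \ne 0$. By the equivalence just proved, every such $T$ satisfies $\hd \ind \ms_T \cong M$, and by the injectivity of $\Phi_\infty$ (Theorem \ref{Thm: crystal iso B(infty)}) there is at most one such $T$. There is thus no real obstacle; the proof is essentially the assembly of Theorem \ref{Thm: crystal iso B(infty)} with the $\mathbf{B}(\infty)$-version of Lemma \ref{Lem: M is hd ind T}, which itself is proved by the same two-line Frobenius-reciprocity argument used in the $\mathbf{B}(\lambda)$-case.
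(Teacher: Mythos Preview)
Your proposal is correct and follows essentially the same route as the paper: the paper simply cites Lemma \ref{Lem: M is hd ind T} and Theorem \ref{Thm: crystal iso B(infty)} without further detail, and what you have written is precisely the natural unpacking of that citation, including the Frobenius-reciprocity argument that transplants Lemma \ref{Lem: M is hd ind T} to the $\mathbf{B}(\infty)$ setting and the use of bijectivity of $\Phi_\infty$ to obtain uniqueness of $T_M$.
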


\vskip 5em


\bibliographystyle{amsplain}
\bibliography{tableaux}


\end{document}